\documentclass[11pt]{article}
\usepackage[margin=1in]{geometry}

\usepackage{preamble}

\newcommand{\indicator}[1]{\mathds{1}\!\!\left\{#1\right\}}
\newcommand{\E}{\mathbb{E}}

\newcommand{\Nzero}{\mathbb{N}_0}
\newcommand{\Prob}{\mathbb{P}}
\newcommand{\Geom}{\mathrm{Geom}}
\newcommand{\Pois}{\mathrm{Pois}}

\newcommand{\iid}{\overset{\text{i.i.d.}}{\sim}}
\newcommand{\dd}{\mathrm{d}}
\newcommand{\equalSpace}{\quad\,\,}
\newcommand{\stoppingRule}{\boldsymbol{r}}
\newcommand{\stoppingTime}{\tau(\boldsymbol{r})}

\newcounter{theorempart}[theorem]

\Crefname{theorempart}{Theorem}{Theorems}

\title{Sample-Based Prophet Inequalities for Random Walks%
    \thanks{Accepted at the 22nd Conference on Web and Internet Economics (WINE 2026).}}
\author{Pieter Kleer, Johan van Leeuwaarden and Daan Noordenbos\\\
Department of Econometrics and Operations Research, Tilburg University\\
\texttt{\{p.s.kleer,j.s.h.vanleeuwaarden,d.noordenbos\}@tilburguniversity.edu}}
\date{\today}

\begin{document}
\maketitle
\thispagestyle{empty}
\begin{abstract}
        We study prophet inequalities for a random walk reward stopping problem with sample-based information. The goal is to stop as close as possible to the maximum of a random walk with i.i.d.~increments, where the performance of a stopping rule is measured as the ratio between the expected reward when stopping and the expected true maximum of the random walk. The latter is the reward of a prophet who gets to see the full random walk beforehand and can stop at the maximum.
         We consider a sample-based model in which the increment distribution of the random walk is unknown and the decision maker is given access to $K$ independent sample paths of the random walk reward process.
        
        For the infinite-horizon setting, we establish a sharp prophet inequality with constant 
        $$
        \left(\frac{K}{K+1}\right)^{K+1}.
        $$
        The guarantee is attained by a randomised stopping rule based on the ladder height decomposition of random walks. 
        As $K\to\infty$, this result recovers the classical $1/e$ prophet inequality from the full information setting (implicitly) known in the literature.
        
        For the finite-horizon setting, where the random walk process terminates after $n$ steps, we first prove a tight no-information prophet inequality with constant $1/H_n$, where $H_n$ denotes the $n$-th harmonic number.
        Next, for $K\ge1$ samples we show that a prophet constant of $1/4$ is attainable. Finally, our most technically challenging result establishes that, with $K$ samples, the prophet constant is at most
        $$
        \left(\frac{K}{K+1}\right)^{K+1}+\frac{6+6H_K}{H_n}
        $$
        for $n\ge 2K^2$, implying convergence to the infinite-horizon constant as $n\to\infty$. Finite horizon random walks are notoriously harder to study, because they lack the Markovian structure that infinite-horizon random walks have. 
        
        Our approach combines random walk theory, including ladder heights and Spitzer's identity, with linear programming duality.
        Our results contribute both to the theory of random walk stopping problems and to the development of sample-based prophet inequalities for correlated rewards, which seems to be a largely unexplored area despite the fact that the sample-based framework has received a lot of attention in recent years. To the best of our knowledge, our tight sample-based prophet inequalities are the first whose performance is parameterised \textit{exactly} (and not only up till constants) by the number of available samples. 
    \end{abstract}

\newpage
\pagenumbering{arabic}
\setcounter{page}{1}

\section{Introduction}

Random walk reward processes are a fundamental tool across probability theory, economics, and computer science to model applications such as the stock market and other financial-decision making processes \cite{cootner1964random,Fama1965original}. 
A random walk is defined by
$
    S_0=0\text{ and }S_k=\sum_{i=1}^kX_i\text{ for }k\ge 1,
$
where $X_1,\dots,X_n$ are independent and identically distributed increments, and the reward is a function of the value of the random walk. The goal is to stop the random walk at a time where the reward is as large as possible.
Unlike classical stopping problems with independent rewards, the sequence $(S_1,\ldots,S_n)$ forms a (strongly) correlated stochastic process, making stopping and prediction problems considerably more challenging.
In this work, we consider the \emph{partial sums stopping problem} where the reward is
$
S_k^+=\max\{0,S_k\}
$
for stopping at time $k$. This reward function models the fact that the stopper can always opt out when rewards are negative.
Determining optimal stopping times for random walk reward processes with full knowledge of the increment distribution of the $X_i$'s, has a long history dating back to the works of \citet{dubins1967optimal} and \citet{darling1972optimal}. Since then there has been a myriad of works focussing on determining the optimal stopping time for such reward processes both in the discrete and continuous-time setting, the latter being known as L\'evy processes; see, e.g., the overview in \cite{lin2024note}. 

An equally interesting, but less studied, setting is the \textit{prophet inequality} version of the problem
that is akin to the competitive ratio in (theoretical) computer science: measure the performance of a stopping rule against the prophet benchmark, which is the true maximum of the reward process that can be observed by a prophet who has seen the whole reward process up front.
The goal is to come up with a stopping time $\tau$ that yields the best possible expected reward guarantee $\E[S_{\tau}^+]$ when compared multiplicatively to the expected prophet benchmark $\E[\max_k S_k^+]$, i.e., come up with the best constant $C \in (0,1]$ for which
$$
\E[S_{\tau}^+] \geq C \cdot \E[\max_k S_k^+],
$$
where the expectation is with respect to the underlying increment distribution and possible randomization of the stopping time. 
\citet{darling1972optimal} were the first to establish such a prophet inequality with $C=1/e$ for the infinite-horizon setting ($n=\infty$) of the partial sums stopping problem, assuming that the random walk increments have a negative expectation.
In the finite horizon setting, where $n < \infty$, Klass \cite{klass1989maximizing} considered the special class of distributions with zero mean showing a prophet inequality of roughly $C=1/2$, and later Wittmann \cite{wittmann1995prophet} gave a prophet inequality with $C = 0.317$ for arbitrary, not necessarily identical, increment distributions. Finally, we remark that Mordecki \cite{mordecki2000optimal,mordecki2002perpetual} derives prophet inequalities for certain L\'evy processes.

The above results typically rely on full distributional knowledge of the underlying increment distribution, which is often deemed unrealistic in practice.
\citet{azar2014prophet} initiated the study of prophet inequality-type problems with limited information, where the stopping time can only be designed based on samples of the involved probability distributions. 
Such information is usually more readily available than the entire distribution.
This paradigm has received a lot of attention in recent years \cite{rubinstein2020optimal, dutting2021prophet, caramanis2022single, correa2022two, gravin2022optimal, kaplan2022online, correa2024sample, cristi2024prophet, fu2024sample, feng2025iid, ezra2026prophet}.
What sets our work apart is that the random walk rewards exhibit a strong dependence, requiring fundamentally different techniques.

In this work we study the prophet inequality version of the partial sums stopping problem with only access to sample information instead of the underlying increment distribution. Specifically, we assume that before the stopping problem begins, the decision-maker is provided with $K$ independent sample paths of the reward process.
The objective is to understand what fraction of the prophet's reward can still be guaranteed uniformly over all increment distributions. The precise model and information structure are introduced in Section~\ref{sec:model}.

\subsection{Our contributions}
Our main prophet inequality results for random walks based on $K$ reward samples paths are given below. In the finite horizon case, we also consider the no-information setting in which zero samples are available (this setting is not relevant in the infinite horizon model).
\begin{itemize}
    \item [(i)] Theorem~\ref{thm:partial-stopping-K-samples} (Infinite-horizon, K samples).
    We establish a sharp prophet constant of $(K/(K+1))^{K+1}$ in the sample-based information model. This is achieved via a randomised stopping rule based on the ladder height decomposition. The constant converges to $1/e$ as $K\to\infty$, matching the classical full-information setting. 
    To the best of our knowledge, this is the first prophet constant that is determined \textit{exactly} as a function of the number of samples. While works such as \citet{correa2019prophet} also study prophet inequalities parameterised by the number of samples, they obtain upper and lower bounds rather than an exact characterisation.
    \item [(ii)] Theorem~\ref{thm:finite-horizon-no-information} (Finite-horizon, no information).
    We prove a tight prophet inequality with constant $1/H_n = \Theta(1/\log n)$, where $H_n$ is the $n$-th harmonic number.
    The bound is attained by an explicit randomised stopping rule derived from Spitzer’s identity.
    \item [(iii)] Theorem~\ref{thm:finite-horizon-impossibility} (Finite-horizon, K samples). 
    We establish the upper bound
    $$
    \left(\frac{K}{K+1}\right)^{K+1}+\frac{6+6H_K}{H_n},\quad\text{(for $n\ge 2K^2$)}
    $$
    on the prophet constant.
    In particular, this bound converges to the infinite-horizon constant as \(n \to \infty\). In Appendix \ref{appendix:finite-horizon-prophet} a stopping rule is given that yields a prophet constant of $1/4$ for $K\ge 1$.
\end{itemize}

These results extend the classical random walk prophet inequality of \citet{darling1972optimal} to a setting with partial information.
More broadly, our results provide progress on the largely unexplored intersection of prophet inequalities for dependent rewards and limited information.

It may appear like Theorem~\ref{thm:finite-horizon-impossibility} immediately implies the optimality of Theorem~\ref{thm:partial-stopping-K-samples} by sending $n\to\infty$. 
This is, however, not quite the full picture. 
Theorem~\ref{thm:finite-horizon-impossibility} relies on an increment distribution that places mass at $-n$. 
In the limit this would place mass at $-\infty$, which is ill-defined.
To transfer the finite-horizon impossibility result to the infinite-horizon setting additional arguments, like Lemma~\ref{lemma:infinite-horizon-samples-finite-reduction}, are required.

\noindent \paragraph{Techniques.}
Our proof methods combine tools from random walk theory with analytical and optimisation based techniques.

A recurring theme is that explicit stopping rules are constructed by exploiting the additional structure provided by random walks. 
In the infinite-horizon setting we use the ladder height decomposition, which represents the maximum as a sum of a geometrically distributed number of i.i.d. ladder height increments. This reduces the stopping problem to a problem involving only geometric random variables. 
In the finite-horizon setting, where the ladder height decomposition is no longer available, we instead rely on Spitzer’s identity, which expresses the expected maximum as a sum of the expected positive partial sums.

A natural stopping rule in the infinite-horizon setting is obtained by estimating the expected number of ladder epochs from the observed samples. Turning this estimate directly into a stopping rule requires rounding, which introduces analytical complications. 
Inspired by \citet[p.87]{romik2015surprising}, we use Poissonisation\footnote{The term Poissonisation does not have a fixed meaning. Our use of it is distinct from, for example, \citet{harb2025new}.} to overcome this obstacle: 
rather than taking a fixed value by rounding the estimate, we take a random value from a Poisson distribution with as mean the (unrounded) estimate.
This smoothing removes discretisation effects and yields a tractable expression for the performance guarantee.

Our impossibility results are obtained via semi-infinite linear programs (LPs). 
By considering a skewed two point increment distribution, upper bounds on the prophet constants can be expressed as linear optimisation problems with infinitely many constraints.
Weak duality then reduces the upper bound analysis to the construction of dual feasible solutions with good objectives. In the no-information setting this yields a short and clean argument. With samples, however, the resulting dual objectives become substantially more involved. We control them through a combination of analytic estimates, probabilistic reinterpretations of relevant sums, and further decompositions.

Our use of LPs goes beyond the typical use of them in stopping problems and online algorithms, such as the framework in \citet{buchbinder2014secretary}, as our LP is infinite-dimensional.
To the best of our knowledge, only a small number of works have recently employed infinite-dimensional linear programming techniques in prophet inequalities, unlike these works, however, our approach yields explicit closed-form constants \citep{perez2025iid, brustle2025splitting}.\\

\noindent It remains a challenge to obtain a matching lower bound, i.e. an algorithmic result with constant $(K/(K+1))^{K+1}$, in the finite-horizon setting.
The main technical obstacle is that the Markovian renewal structure underlying the infinite-horizon setting is lost, requiring an analysis of a more transient flavour. 
In particular, the ladder height decomposition, which is central to our approach, is no longer available. 
This increased difficulty is not unique to the partial information setting, although the presence of samples amplifies it. 
The distinction is already apparent in the full-information setting: the infinite-horizon result of \citet{darling1972optimal} is straightforward, whereas the finite-horizon result of \citet{wittmann1995prophet} requires substantially more care and is not known to be tight. 
In Appendix~\ref{appendix:finite-horizon-prophet} we discuss this obstacle in greater detail and give a $1/4$ prophet inequality for the single sample setting.

\subsection{Related work}\label{sec:related-work}
The classical prophet inequality problem consists of having $n$ sequentially revealed independent random variables $Y_1,\dots,Y_n$. The goal is to stop at the maximum value $Y_{\max} = \max_i Y_i$. The payoff of a stopping rule $\tau$ is the expectation $\E[Y_{\tau}]$ of the chosen value $Y_\tau$, and is compared against $\E[\max_i Y_i]$, which is the payoff a prophet, who can see all outcomes up front, can achieve. Krengel, Sucheston (and Garling)~\cite{krengel1978semiamarts} give a $1/2$-prophet inequality for this problem. \citet{samuel1984comparison} later showed that the same guarantee can be achieved by selecting the first value that exceeds the median of $Y_{\max}$, and \citet{wittmann1995prophet} showed this can also be achieved instead with taking half of the expectation of $Y_{\max}$ (see also \cite{kleinberg2012matroid}). We refer to \citet{lucier2017economic} for extensions of the classical prophet inequality problem.

Despite the extensive growth of the (classic) prophet inequality literature in the last decade, it seems not too much is known about prophet inequalities for stopping partial sums. 
As mentioned earlier, a closely related work to ours is that of \citet{darling1972optimal} who prove a tight prophet inequality of $1/e \approx 0.368$ for the partial sums stopping problem in the infinite-horizon setting when the distribution $F$ is known. 
For the finite-horizon case, the best result is due to \citet{wittmann1995prophet} who gives a prophet inequality of $0.317$ for any finite-horizon $n$ for the partial sums stopping problem. 
In fact, this bound also holds when the increments are only independent, but not necessarily identical. Furthermore, when the increment distributions are i.i.d. with mean zero, \citet{klass1989maximizing} shows a prophet inequality of $1/(2- 1/n)$ where the stopping rule simply constitutes of stopping at the last time $n$. We remark that this inequality is also valid in the no-information setting.

The partial sums stopping problem can also be seen as a prophet inequality problem with correlated rewards. When arbitrary correlations between the $Y_i$ are allowed, \citet{hill1983stop} show a tight prophet inequality of the order $1/n$. \citet{immorlica2023prophet} gave a more fine-grained analysis by introducing a linear correlations model where the $n$ rewards are random variables $Y = (Y_1,\dots,Y_n)$ correlated through a matrix $A$ as $Y=AZ$ where $Z = (Z_1,\dots,Z_n)$ is a vector of independent random variables. They give prophet inequalities in terms of the row and column sparsity of the matrix $A$. A random walk could be fit in this model with the $Z_i$ the increments of the random walk and $A$ a lower-triangular matrix. However, crucially, the framework of \citet{immorlica2023prophet} only works for non-negative random variables $Z_i$ (for which the random walk problem is trivial). Our increment distribution can attain both positive and negative values (and in fact is assumed to have negative mean) to model the fact that a random walk can go up and down. For further recent works related to prophet inequalities with correlations, see \citep{livanos2024improved, caragiannis2021relaxing,kleer2025bayesian}.

Furthermore, relevant for us is a research line on prophet inequalities with limited information, initiated by \citet{azar2014prophet}, where instead of knowing the full distribution the gambler is only given access to samples of the distributions. \citet{rubinstein2020optimal} give a tight prophet inequality of $1/2$ for the classical prophet inequality problem with one sample from each distribution $Y_1,\dots,Y_n$. In recent years, sample-based prophet inequalities for various extensions and variations, such as prophet secretary problems and combinatorial settings, have also been developed \cite{dutting2021prophet,caramanis2022single,correa2022two,gravin2022optimal,kaplan2022online,correa2024sample,cristi2024prophet,fu2024sample,feng2025iid,ezra2026prophet}. Other settings in which full distributional information is not known have also been considered in the literature, such as the works of \citet{dutting2019posted} and \citet{correa2026informativeness} who consider classical prophet inequalities with inaccurate priors and moment-based information respectively.

\section{Model}\label{sec:model}

This section presents the framework for stopping rules with partial information and the random walk model studied in this paper.
The following notation is fixed throughout: $\mathbb{N}_0=\mathbb{N}\cup\{0\}$, $x^+=\max\{0,x\}$, $\log$ is the natural logarithm, and $\mathcal{P}(A)$ denotes the set of all probability measures on the set $A$.

\subsection{Stopping rules with partial information}\label{sec:stopping-rules}
Let $(Z_k)_{k\ge 1}$ be a real-valued stochastic process representing rewards. After observing $Z_1,\dots,Z_k$, a decision-maker must decide whether to stop and select the current reward, or to continue.

The decision-maker is provided with auxiliary information, modelled as a realization $d$ of a $\mathcal{D}$-valued random variable $D$ revealed before any observations of the process. The realisation $d$ may encode partial information about the law of $(Z_k)_{k \geq 0}$, such as an independent sample from its distribution.

A (possibly randomised) \emph{stopping rule} is a family of measurable functions $\{r_k\}_{k\ge 1}$, where $r_k(d,z_1,\dots,z_k)\in[0,1]$ denotes the probability of stopping at the $k$-th value given the auxiliary data $D=d$ and the observed trajectory $Z_1=z_1,\dots, Z_k=z_k$, conditional on not having stopped previously. The set of all such stopping rules is denoted by $\mathcal{R}(\mathcal{D})$. 

Each stopping rule $\stoppingRule\in \mathcal{R}(\mathcal{D})$
induces a stopping time $\stoppingTime\in\mathbb{N}\cup\{\infty\}$ whose conditional distribution satisfies
$$
\Prob(\stoppingTime=k|D=d,Z_1 = z_1,\dots,Z_k=z_k)=r_k(d,z_1,\dots,z_k)\prod_{j=1}^{k-1}(1-r_j(d,z_1,\dots,z_j))
$$
The expected reward under the stopping rule $\stoppingRule$ is denoted by $\E[Z_{\stoppingTime}]$.
\subsection{Random walk stopping with limited information}

We study a stopping problem for random walks under limited information.
Let $n\in\mathbb{N}$ denote the horizon, 
let the increments $X_1,\dots,X_n$ be i.i.d. random variables with distribution $F$, 
and let $(S_k)_{0\le k\le n}$ be the associated random walk defined by
$$
    S_0=0\text{ and }S_k=\sum_{i=1}^kX_i.
$$ 
We consider the \emph{partial sums stopping problem}, where the rewards $S_1^+,\dots,S_n^+$ (recall that $S_k^+ = \max\{0, S_k\}$) are revealed sequentially, and after each reward is revealed, the decision-maker must choose either to accept it and stop, or to discard it and continue. 
The objective is to maximise the expected selected reward relative to the prophet benchmark $M_n = \max_{0\le k\le n} S_k^+$.

The decision-maker does not observe the increment distribution $F$,
but instead has access to $K$ independent sample paths of the reward process,
$$
D=(S_{k,i}^+)_{0\le k\le n, 1\le i\le K}, 
$$
where $S_{k,i}$ denotes the $k$-th partial sum of the $i$-th sample path. We refer to this information structure as the \emph{reward-sample model}.

A stronger information structure is the \emph{increment-sample model}, in which the decision-maker observes the full sample paths $(S_{k,i})_{0\le k\le n, 1\le i\le K}$ (equivalently, the increments $X_{k,i}$). 
Since the reward-sample model only reveals the positive parts of the partial sums, it contains strictly less information and is therefore a more restricted variant of the problem.

The above model extends naturally to the infinite-horizon setting $n=\infty$, but some additional care is required.
To ensure that the maximum $M=\max_{k\ge 0}S_k^+$ is almost surely finite, we impose the standard assumption that $\E[X_1]=\mu<0$.
Under this assumption, the random walk satisfies $S_k \to -\infty$ almost surely, so it is positive only finitely often. Consequently, the reward sample paths admit a finite representation.

By contrast, the increment-sample model becomes problematic when $n=\infty$. An infinite increment sample path does not admit a finite representation, and observing infinitely many increments effectively reveals the distribution $F$, reducing the problem to a full-information setting. The reward-sample model avoids these issues and remains a genuinely limited-information model.

A natural question, however, concerns the implementability of the reward sample model: although each sample path admits an almost surely finite representation,
the amount of time needed to acquire this representation (or even a statistic of it, like for example the amount of ladder epochs) is not uniformly bounded.
In Section \ref{sec:random-horizon}, we address this by introducing geometric killing, whereby the process is terminated independently at each step with a small fixed probability. The resulting model admits uniformly bounded sample representations with high probability while remaining arbitrarily close to the original infinite-horizon problem.

In the next section, we present our main result for the infinite-horizon setting: a tight prophet inequality with constant $(K/(K+1))^{K+1}$.

\section{Infinite-horizon}
To derive our results for the infinite-horizon setting, we first recall key structural properties of random walks and their maxima (see \citet{feller2} for further details).

Central to our analysis is the \emph{ladder height process}, which describes the successive moments at which the random walk attains a new maximum.
Formally, the \emph{strictly ascending ladder times} are defined recursively by $
T_0=0$ and $T_{i+1}=\inf\{k>T_i:S_k>S_{T_i}\}$. 
The corresponding \emph{ladder height increments} are given by 
$J_i=S_{T_{i+1}}-S_{T_{i}}$,
with the associated inter arrival times $I_i=T_{i+1}-T_i$.
The pairs $(I_i,J_i)$ form an i.i.d. sequence.

This viewpoint leads to the \emph{ladder height decomposition}, which expresses the global maximum of the random walk as the sum of its successive record increments,
\begin{equation}
    \label{eq:ladder-height-decomposition}
    M=\sum_{i=1}^{G}J_i,
\end{equation}
where $G=\max\{k:T_k<\infty\}$ denotes the (random) number of ladder epochs. 
Classical results imply that $G$ has a geometric distribution and is independent of the increments $J_i$.
Consequently, using the independence and i.i.d. structure, we obtain $\E[M]=\E[G]\E[J_1]$, which is a direct result of Wald's equation.

The decomposition in \eqref{eq:ladder-height-decomposition} suggests analysing the problem at the level of ladder heights rather than individual increments. This isolates the relevant random walk structure, since the distributional details of the increments enter only through the ladder height distribution. In particular, the stopping problem reduces to one involving geometric random variables.

Specifically, given $K$ samples $G_1,\dots,G_K\iid\Geom(1-p)$, we seek a (possibly randomised) stopping time $Y_{G_1,\dots,G_K}$ such that 
$$
\frac{\E[Y_{G_1,\dots,G_K}\indicator{G\ge Y_{G_1,\dots,G_K}}]}{\E[G]}\ge C_K
$$
for all parameter values $p\in(0,1)$. 
If $p$ were known, then the optimal stopping moment would be $\lfloor1+\E[G]\rfloor$.\footnote{The optimal stopping moment maximises $f_n=\E[n\indicator{G\ge n}]=np^n$. Note that $f_{n+1}/f_n=(1+1/n)p$, so $f_{n+1}\ge f_n\iff n\le p/(1-p)$. Hence, $f_n$ increases up to $n\le p/(1-p)$ and then decreases, so it attains a maximum over the integers at $\lfloor1+p/(1-p) \rfloor=\lfloor1+\E[G]\rfloor$.}
This suggests a stopping rule where $\E[G]$ is replaced with $\bar{G}=\frac{1}{K}\sum_{i=1}^KG_i$. This, however, is not analytically tractable.
To circumvent this, we introduce an additional layer of randomisation.
\begin{definition}[Poissonised stopping rule]
\label{def:pois-s-rule}
Let $\bar G=\frac1K\sum_{i=1}^K G_i$,
where $G_i$ denotes the number of ladder epochs of the $i$-th reward sample path.
Conditional on $\bar G$, let $Y-1\mid \bar G\sim\Pois(\bar G).$
The Poissonised stopping rule stops at the $Y$-th ladder epoch of the target random walk if it exists; otherwise it never stops and gets a reward of zero.
\end{definition}
\noindent The following theorem establishes that the Poissonised stopping rule achieves the optimal guarantee.
\begin{theorem}
    \label{thm:partial-stopping-K-samples} 
    Let $(S_k)_{k\ge 0}$ be a random walk with increments $X_t\iid F$, with $\E[X_1] = \mu < 0$,\footnote{Negative drift ensures that the maximum of the random walk is a.s. finite, and the problem well-defined.} and for $i=1,\dots,K$ let $(S_{t,i})_{t\ge 0,1\le i\le K}$ be independent copies. 
    \begin{itemize}
        \item [(a)] \refstepcounter{theorempart}\label{thm:partial-stopping-K-samples-prophet} (Prophet inequality) 
        Let $\stoppingRule$ denote the Poissonised stopping rule of Definition~\ref{def:pois-s-rule}. Then
        $$
            \E[S_{\stoppingTime}^+]\ge\left(\frac{K}{K+1}\right)^{K+1}\E[\max_{k\ge 0}S_k].
        $$
        \item [(b)] \refstepcounter{theorempart}\label{thm:partial-stopping-K-samples-impossibility} (Sharpness) 
        Among all stopping rules that have access to auxiliary data $D=(S_{t,i}^+)_{t\ge 0,1\le i\le K}$, the constant $(K/(K+1))^{K+1}$ is optimal.
    \end{itemize} 
\end{theorem}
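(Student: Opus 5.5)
The plan for part~(a) is to reduce everything to geometric random variables via the ladder height decomposition \eqref{eq:ladder-height-decomposition}. Write $G$ for the number of ladder epochs of the target walk, so that $\Prob(G\ge n)=p^n$ for some $p\in(0,1)$ and $\E[G]=p/(1-p)$. On the event $\{G\ge Y\}$ the Poissonised rule receives $S_{T_Y}=\sum_{i=1}^{Y}J_i$, and otherwise it receives $0$. The variable $Y$ is a function of the sample paths and independent randomisation only, so it is independent of $(G,J_1,J_2,\dots)$. The $J_i$ are also independent of $G$. Wald's identity therefore gives $\E[S^+_{\stoppingTime}]=\E[J_1]\,\E[Y\indicator{G\ge Y}]=\E[J_1]\,\E[Yp^Y]$. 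Since $\E[M]=\E[J_1]\E[G]$, the ratio to bound is
\[
R_K(p)=\frac{1-p}{p}\,\E[Yp^Y].
\]
This ratio does not depend on $F$ except through $p$.

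Next I would compute $R_K(p)$ in closed form. Conditioning on $\lambda=\bar G$ with $Y-1\sim\Pois(\lambda)$ gives $\E[Yp^Y\mid\lambda]=p\,e^{-\lambda(1-p)}(1+\lambda p)$. Hence
\[
R_K(p)=(1-p)\,\E\bigl[e^{-s K\bar G}\bigr]+p(1-p)\,\E\bigl[\bar G e^{-sK\bar G}\bigr],\qquad s=\tfrac{1-p}{K}.
\]
Here $K\bar G$ is a sum of $K$ i.i.d.\ geometrics, so both terms follow from the probability generating function $\bigl((1-p)/(1-pz)\bigr)^K$ and its derivative, evaluated at $z=e^{-s}$. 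As a sanity check, consider the limit $p\to1$. Then $(1-p)\bar G$ converges in law to $Z\sim\mathrm{Gamma}(K,\text{rate }K)$, and $R_K(p)\to\E[Ze^{-Z}]=\frac{K^K K!}{\Gamma(K)(K+1)^{K+1}}=\left(\frac{K}{K+1}\right)^{K+1}$, which is exactly the claimed constant.

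The main obstacle in (a) is showing that this limit is the infimum, i.e.\ $R_K(p)\ge (K/(K+1))^{K+1}$ for all $p\in(0,1)$. My first approach is to prove that the explicit closed form is nonincreasing in $p$. Failing that, I would bound $e^{-s}\le 1-s+s^2/2$ termwise to obtain a rational function of $p$ and compare it to the limit directly.

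For part~(b), I would use a two-point increment distribution: $+1$ with probability $a$ and $-N$ with probability $1-a$, with $N$ large. Then every ladder height equals $1$. On an event whose probability tends to $1$ as $N\to\infty$, the walk climbs $G$ steps before the first down-step and then never returns above its maximum, so $G\approx\Geom$ with parameter $p\approx a$. Each reward sample path then reveals essentially only $G_i$. Without loss of generality, any rule reduces to choosing a (randomised) stopping index $Y$ as a function of $(G_1,\dots,G_K)$, because after the first down-step stopping is worthless. Its ratio is then $\frac{1-p}{p}\E[Yp^Y]$. To show that no rule beats $(K/(K+1))^{K+1}$ uniformly in $p$, I would use a Bayesian/minimax argument. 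Place a prior on $q=1-p$ concentrated near $0$, for instance $\pi(q)\propto q^{-1}$ on $[\varepsilon,\delta]$, which is approximately scale-invariant. Weight the ratio by this prior and compute the Bayes-optimal $y$ given $\sum_i G_i=m$. In the scaling regime $y\approx t/q$, the posterior of $qm$ is Gamma-like, and optimising $\E[t e^{-t/\cdot}]$ should give back $\E[Ze^{-Z}]$ with $Z\sim\mathrm{Gamma}(K,K)$ and $t=\bar G$ optimal up to vanishing error. The Bayes risk then tends to $(K/(K+1))^{K+1}$ as $\varepsilon,\delta\to0$. The infimum over $p$ is at most this average, so the bound follows.

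The delicate points in (b) are, first, controlling the boundary terms of the scale-invariant prior, and second, passing from finite $N$ to a legitimate infinite-horizon distribution with $\mu<0$. For the latter I would invoke a reduction in the spirit of Lemma~\ref{lemma:infinite-horizon-samples-finite-reduction}, or directly bound the error term from the walk returning above its maximum, which vanishes as $N\to\infty$.
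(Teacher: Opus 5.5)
Your plan is essentially the paper's proof. In (a) the paper derives the same ratio $\frac{1-p}{p}\E[Yp^Y]$ and the closed form $\bigl(\frac{1-p}{1-q(p)}\bigr)^{K+1}(1-q(p)(1-p))$ with $q(p)=pe^{-(1-p)/K}$, and then proves exactly the monotonicity in $p$ you propose (Lemma~\ref{lemma:pc-k-p-infimum}, by showing $\frac{d}{dp}\log\mathrm{PC}(K,p)\le 0$); in (b) your Bayesian average under the prior $\propto q^{-1}\,dq$ is precisely the paper's weak-duality certificate with dual measure $d\nu(p)\propto dp/(1-p)$, and your posterior-Gamma heuristic with Bayes-optimal index $\approx\bar G$ is what Lemmas~\ref{lemma:constraint-bound} and~\ref{lemma:interior-objective} make rigorous (the optimum is at $m=\lceil s/K\rceil$, and each level $s$ contributes about $(K/(K+1))^{K+1}/s$). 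The one point to tighten is that the prior must spread over many scales, i.e.\ $\log(\delta/\varepsilon)\to\infty$ rather than merely $\varepsilon,\delta\to 0$, since otherwise the boundary terms are not negligible; the paper takes $\delta=1$, $\varepsilon=1/(L+1)$ and integrates the random-walk error against $\nu$, whereas your decoupling of $N$ from the prior's support lets you bound that error uniformly instead.
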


\noindent We first provide a proof sketch for Theorem \ref{thm:partial-stopping-K-samples-impossibility}, followed by the proof of Theorem ~\ref{thm:partial-stopping-K-samples-prophet}. 
The key idea behind Theorem \ref{thm:partial-stopping-K-samples-impossibility} is to construct a hard instance using a two-point increment distribution with highly likely $+1$ steps and rare large negative jumps $-L$ for some large $L$. The deterministic positive increments ensure that the process reveals essentially no additional information over time, while the large negative jumps make recovery to a positive partial sum after a decrement exceedingly unlikely. Consequently, with high probability the process behaves as if it terminates after its first negative jump. This reduction is made precise using tail bounds for the maximum of the associated random walk.

Up to a loss in the upper bound, this reduces the problem to a stopping problem with a geometric horizon. The resulting problem is sufficiently tractable to be reformulated as a semi-infinite LP over stopping times. The desired upper bound is then obtained via weak duality by constructing a carefully chosen dual feasible solution. Finally, a sharp asymptotic analysis shows that both the reduction error and the dual upper bound converge to the tight constant $C_K$. The full proof can be found in Appendix \ref{appendix:infinite-horizon-impossibility-proof}. 

\subsection{Proof of Theorem \ref{thm:partial-stopping-K-samples-prophet}}
    Let $M=\max_{k\ge 0}S_k$. Recall that $M=\sum_{i=1}^{G}J_i$ where $J_i$ are i.i.d. random variables and $G\sim\Geom(1-p)$ is independent of the $J_i$'s and that $\E[M]=\E[G]\E[J_1]$. 
    From each of the $K$ reward sample paths an independent realisation $G_i$ from $\Geom(1-p)$ can be constructed by counting the amount of successive maxima. 
    Now, consider a stopping rule $\stoppingRule$ that stops at the $Y_{G_1,\dots,G_K}$-th ladder height, where $Y_{G_1,\dots,G_K}$ is an integer-valued random variable indexed by $G_1$ to $G_K$.
    The payoff and expected payoff under this stopping rule are
    \begin{align*}
        S_{\stoppingTime}^+&=\indicator{G\ge Y_{G_1,\dots,G_K}}\sum_{i=1}^{Y_{G_1,\dots,G_K}} J_i,\\
            \E[S_{\stoppingTime}^+]&=\E[Y_{G_1,\dots,G_K}\indicator{G\ge Y_{G_1,\dots,G_K}}]\E[J_1],
    \end{align*}
    respectively, because the $J_i's$ are independent of $G$ and $Y_{G_1,\dots,G_K}$. Consequently, 
    \begin{align*}
        \frac{\E[S_{\stoppingTime}^+]}{\E[M]}&=\frac{\E[Y_{G_1,\dots,G_K}\indicator{G\ge Y_{G_1,\dots,G_K}}]\E[J_1]}{\E[G]\E[J_1]}\\&=\frac{\E[Y_{G_1,\dots,G_K}\indicator{G\ge Y_{G_1,\dots,G_K}}]}{\E[G]}:=\mathrm{PC}(K,p).
    \end{align*}
As mentioned previously, if the parameter $p$ would be known, then the optimal stopping moment would be $\lfloor1+\E[G]\rfloor$. 
    Intuitively, a natural candidate is the unbiased estimator 
    $Y_{g_1,\dots,g_K}=\left\lfloor1+\frac{1}{K}\sum_{i=1}^Kg_i\right\rfloor$. However, this particular choice is analytically intractable due to the presence of the floor function.
    Inspired by the Poissonisation trick used in the analysis of the Plancherel measure \cite[p.87]{romik2015surprising}, the randomised stopping moment $Y_{g_1,\dots,g_K}-1\sim\text{Pois}\left(\frac{1}{K}\sum_{i=1}^Kg_i\right)$ is chosen. This makes the analysis more regular by eliminating rounding. 
    
    Note that $\sum_{i=1}^KG_i$ has a negative binomial distribution with parameters $K$ and $1-p$. Using this fact the performance of the above stopping rule can be calculated as
    \begin{align*}
        \mathrm{PC}(K,p)&:=\frac{\E[Y_{G_1,\dots,G_K}\indicator{G\ge Y_{G_1,\dots,G_K}}]}{\E[G]}\\
        &=\frac{1-p}{p}\E[Y_{G_1,\dots,G_K}p^{Y_{G_1,\dots,G_K}}]\\
        &= \frac{1-p}{p}\sum_{n=0}^\infty \mathbb{P}\left[\sum_{i=1}^K G_i = n\right]\cdot \E\left[Y_{G_1,\dots,G_K}p^{Y_{G_1,\dots,G_K}} \ | \ \sum_{i=1}^K G_i = n \right] \\
        &=\frac{1-p}{p}\sum_{n=0}^{\infty}\binom{n+K-1}{K-1}(1-p)^Kp^n\left(\sum_{k=0}^\infty(k+1)p^{k+1}\cdot\frac{\left(n/K\right)^ke^{-n/K}}{k!}\right),
    \end{align*}
    where we use the definition of the probability mass function of the shifted Poisson distribution $1 + \text{Pois}(n/K)$ in the final equality.
    Using the identity $\sum_{k=0}^\infty(k+1)x^k/k!=e^x(1+x)$, with $x = pn/K$, the inner sum can be simplified to
    \begin{align*}
        \sum_{k=0}^\infty(k+1)p^{k+1}\cdot\frac{\left(n/K\right)^ke^{-n/K}}{k!}=pe^{-n/K}\sum_{k=0}^\infty(k+1)\frac{(pn/K)^k}{k!}=pe^{-(1-p)n/K}(1+pn/K).
    \end{align*}
    Substituting this yields
    \begin{align*}
        \mathrm{PC}(K,p)&=\frac{1-p}{p}\sum_{n=0}^{\infty}\binom{n+K-1}{K-1}(1-p)^Kp^n\left(pe^{-(1-p)n/K}(1+pn/K)\right)
        \\
        &=(1-p)^{K+1}\sum_{n=0}^{\infty}\binom{n+K-1}{K-1}\left(pe^{-(1-p)/K}\right)^n(1+pn/K).
    \end{align*}
    Define $q(p)=pe^{-(1-p)/K}$ and note that $q(p) \in (0,1)$.
    With the binomial identities
    $$
    \sum_{n=0}^{\infty}\binom{n+K-1}{K-1}x^n=\frac{1}{(1-x)^K}\text{ and }\sum_{n=0}^{\infty}\binom{n+K-1}{K-1}nx^n=\frac{Kx}{(1-x)^{K+1}}\text{ for }|x|<1
    $$
    the expression with the infinite sums can be simplified to
    \begin{align}
        \label{eq:poisson-rule-performance}
        \mathrm{PC}(K,p)&=(1-p)^{K+1}\left(\sum_{n=0}^{\infty}\binom{n+K-1}{K-1}q(p)^n+\frac{p}{K}\sum_{n=0}^{\infty}\binom{n+K-1}{K-1}nq(p)^n\right)\nonumber\\
        &=(1-p)^{K+1}\left((1-q(p))^{-K}+\frac{p}{K}
        \cdot Kq(p)(1-q(p))^{-(K+1)}\right)\nonumber\\
        &=(1-p)^{K+1}\left((1-q(p))(1-q(p))^{-(K+1)}+pq(p)(1-q(p))^{-(K+1)}\right)\nonumber\\
        &=\left(\frac{1-p}{1-q(p)}\right)^{K+1}(1-q(p)(1-p)).
    \end{align}
    In Lemma \ref{lemma:pc-k-p-infimum} the proof is finished by showing that
    \begin{equation}
        \inf_{p\in(0,1)}\mathrm{PC}(K,p)=\left(\frac{K}{K+1}\right)^{K+1}.
    \end{equation}

\subsection{Random horizon}\label{sec:random-horizon}

A natural question concerns the implementability of the stopping rule from Theorem \ref{thm:partial-stopping-K-samples-prophet}. 
Implementing this rule requires counting the number of ladder epochs in each reward sample path. 
The difficulty is that it is not clear how long one must wait after the last observed ladder height before concluding (with high probability) that no further ladder heights will occur. 
In fact, there is no uniform bound on the tail behaviour of the ascending ladder height times $I_i$, as $\Prob(I_i>n)$ may decay arbitrarily slowly \cite{rogozin1971distribution}. Consequently, even if one only aims for a high-probability guarantee, there is no a priori bound on the amount of data that must be processed in order to count the ladder heights.
An apparent way to sidestep this concern is to consider the finite-horizon setting of Section~\ref{sec:finite-horizon}.
In this setting, however, the ladder height decomposition in \eqref{eq:ladder-height-decomposition} is lost, and we do not recover comparably sharp results.

To obtain a formulation that is both implementable and structurally close to the infinite-horizon setting, we introduce geometric killing: at each time step, the process is independently terminated with a small fixed probability. Geometrically killed random walks are standard in the random walk literature, and they turn infinite-horizon problems into approximately finite-horizon ones while preserving useful structure \cite{kyprianou2006introductory}. One may also view this modification as modelling an external disruption, such as a crisis, that can occur at any time.

Formally, fix $\alpha\in(0,1)$ known to the decision-maker, and let $T,T_1,\dots,T_K \iid \Geom(\alpha)$ be independent termination times. Define the killed reward process by
$Z_k := \indicator{T\ge k} S_k.$
The decision-maker has access to data
$
D = (S_{0,1}^+,\dots,S_{T_1,1}^+,S_{0,2}^+,\dots,S_{T_2,2}^+,\dots,S_{0,K}^+,\dots,S_{T_K,K}^+)
$. 
Under this formulation, the decision-maker may impose a time limit, depending on $K$ and $\alpha$, up to which each process is observed. 
Because of the killing, the terminations times have a known distribution, and the decision-maker can ensure, with high probability, that all (relevant) data has been observed.\footnote{More precisely, the distribution of the total amount of data $|D|$ is known, so for any $\epsilon>0$ one can choose a threshold $L(K,\alpha,\epsilon)$ such that $\Prob(|D|>L(K,\alpha,\epsilon))<\epsilon$. Thus, with a fixed computational budget, the number of ladder heights in the samples can be observed 
with high probability.}
\color{black}

This modification preserves the Markovian structure of the ladder heights, and so the decomposition in \eqref{eq:ladder-height-decomposition} remains valid. 
The only notable change is that the parameter $p$ associated with the amount of ladder heights is now smaller and restricted to $p\in(0,1-\alpha)$ rather than $p\in(0,1)$.

Denote by $\mathrm{PC}_\alpha(K)$ the prophet constant in the killed model. Formally, 
$$
\mathrm{PC}_\alpha(K):=\sup_{\stoppingRule\in\mathcal{R}(\mathcal{D})}\inf_{F\in\mathcal{P}(\mathbb{R})}\frac{\E[Z^+_{\stoppingTime}]}{\E[\max_{k\ge 0}Z_k]}.
$$
The analysis of the Poissonised stopping rule from Theorem~\ref{thm:partial-stopping-K-samples-prophet} carries over directly, with the only modification that $p\in(0,1-\alpha)$. Consequently,
$$
\mathrm{PC}_\alpha(K)\ge \inf_{p\in(0,1-\alpha)}\mathrm{PC}(K,p)\ge \inf_{p\in(0,1)}\mathrm{PC}(K,p)=\left(\frac{K}{K+1}\right)^{K+1}.
$$
In words, the same stopping rule remains valid in the geometric horizon setting.

Moreover, as $\alpha\to 0$, $\mathrm{PC}_\alpha(K)\to (K/(K+1))^{K+1}$.
This can be established by adapting the impossibility argument from Theorem~\ref{thm:partial-stopping-K-samples-impossibility}: the role of rare large negative jumps is now played by termination events, and the reduction leads to a simplified geometric stopping problem. Since this reduction becomes exact as $\alpha\to 0$, the resulting upper bound converges to the lower bound. We leave the details to the interested reader.

\section{Finite-horizon setting}\label{sec:finite-horizon}

In this section we study the partial sums stopping problem in the finite-horizon setting. As noted before, this setting is substantially more difficult than the infinite-horizon case, because the process is no longer Markovian and because the ladder height decomposition in \eqref{eq:ladder-height-decomposition} no longer applies.
This is already apparent in the full-information setting. The infinite-horizon setting was resolved by \cite{darling1972optimal}, while the finite-horizon setting remains open, with the best results due to \cite{wittmann1995prophet}.

Despite this additional difficulty, we can still prove several results. In particular, when no information about the increment distribution is available, we obtain a tight $1/H_n$ prophet inequality.

For $K=1$, and therefore also $K\ge 1$, we give a $1/4$-prophet inequality. Moreover, we prove an impossibility result for the prophet constant that converges to the infinite-horizon bound as $n\to\infty$.

\subsection{No information}
Even without any information about the increment distribution, the finite-horizon setting admits non-trivial results because of the additional structure imposed by the random walk. In particular, we obtain an $O(1/\log n)$ guarantee.

Our algorithmic result is derived from Spitzer's identity, and the matching impossibility result follows from weak duality for semi-infinite LPs. This setting also provides a clean illustration of the duality method used throughout our impossibility proofs.
\begin{theorem}
    \label{thm:finite-horizon-no-information}
    Let $(S_k)_{0\le k\le n}$ be a random walk with increments $X_i\iid F$. Then there is a stopping time $\tau\le n$, that has no knowledge of $F$, such that
    $$
    \E[S_{\tau}^+]\ge\frac{1}{H_n}\E\left[\max_{0\le k\le n}S_k\right],
    $$
    where $H_n$ is the $n$-th harmonic number. Moreover, the constant $1/H_n$ is optimal.
\end{theorem}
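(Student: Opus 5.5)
The plan has two parts. For the guarantee I use Spitzer's identity to build a randomised fixed-time rule. For optimality I use a two-point instance and a weak-duality argument against a mixture over the parameter $p$.

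\emph{Algorithmic part.} Spitzer's identity gives
$$\E\Big[\max_{0\le k\le n}S_k^+\Big]=\sum_{k=1}^n\frac{1}{k}\E[S_k^+].$$
Consider the rule that, independently of the walk, draws an index $\kappa\in\{1,\dots,n\}$ with $\Prob(\kappa=k)=\frac{1}{kH_n}$ and stops deterministically at time $\kappa$. This rule does not depend on $F$ and satisfies $\tau\le n$. Because $\kappa$ is independent of the walk,
$$\E[S_\tau^+]=\sum_{k=1}^n\frac{1}{kH_n}\E[S_k^+]=\frac{1}{H_n}\E\Big[\max_{0\le k\le n}S_k^+\Big].$$
This is the claimed bound, since $\max_{0\le k\le n}S_k=\max_{0\le k\le n}S_k^+$ because $S_0=0$.

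\emph{Optimality.} Take increments equal to $+1$ with probability $p$ and $-n$ with probability $1-p$, where $p\in(0,1)$ is unknown to the stopper. After the first negative step, every later partial sum is at most $(n-1)-n<0$. The reward path is therefore $1,2,\dots,G$ followed by zeros, where $G$ is the number of initial up-steps, with $\Prob(G\ge k)=p^k$. While the walk is still rising, the observed trajectory is deterministic. Any rule (with no information) is then described by numbers $x_k\ge0$ with $\sum_k x_k\le1$, where $x_k$ is the unconditional probability of stopping at time $k$ given that the first $k$ steps were up. Stopping after the drop yields $0$, so it can be ignored. The expected reward is $\sum_{k=1}^n x_k\,k\,p^k$, and the prophet earns $\E[\min(G,n)]=\sum_{k=1}^n p^k$.

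The best constant $c$ is the value of the semi-infinite LP
$$\max\ c\quad\text{s.t.}\quad \sum_{k}x_k k p^k\ge c\sum_k p^k\ \ \forall p\in(0,1),\qquad \sum_k x_k\le 1,\ x\ge0.$$

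\emph{Dual certificate.} Integrate the constraints against the measure $\dd p/p$ on $(0,1)$. This is the step that needs a good guess, and I expect it to work because it makes every primal coefficient equal. Indeed,
$$\int_0^1 k p^{k-1}\,\dd p=1\quad\text{for every }k,\qquad\text{while}\qquad \int_0^1\sum_{k=1}^n p^{k-1}\,\dd p=\sum_{k=1}^n\frac1k=H_n.$$
Integrating the constraints therefore gives $\sum_k x_k\ge c\,H_n$. Combined with $\sum_k x_k\le1$, this forces $c\le 1/H_n$.

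The remaining check is routine: a rule with random or adaptive behaviour is still captured by the $x_k$. The only information the stopper has before the drop is the deterministic sequence of observations $1,2,\dots,k$, so every rule reduces to such a vector. Hence no rule beats $1/H_n$ uniformly over $p$, and the constant is tight.
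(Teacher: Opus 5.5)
Your proof is correct and follows the paper's argument essentially step for step: the same Spitzer-identity randomised rule with $\mathbb{P}(\tau=k)=1/(kH_n)$, the same $+1/-n$ two-point instance, and the same semi-infinite LP. Integrating the unnormalised constraints against $\mathrm{d}p/p$ is exactly the paper's dual certificate $\mathrm{d}\mu(p)=\frac{1}{H_n}\frac{1-p^n}{1-p}\,\mathrm{d}p$ applied to the ratio form of the constraints, up to the factor $H_n$, so you have simply skipped writing out the dual LP explicitly.
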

\begin{proof}
    The algorithmic result relies on Spitzer's identity, which expresses the expected finite-horizon maximum in terms of the positive parts of the partial sums. Let $M_n=\max_{0\le k\le n}S_k$. Spitzer's identity \text{(see \citet{feller2})} states that 
    \begin{align*}
        \E[M_n] = \sum_{k=1}^n \frac{1}{k}\E[S_k^+].
    \end{align*}
    By dividing by the harmonic number $H_n=\sum_{k=1}^n\frac{1}{k}$ the right-hand side becomes a convex combination of $\E[S_k^+]$'s, and therefore admits an interpretation as the performance of a randomised stopping rule. Let $\tau$, independent of the process, satisfy $\mathbb{P}(\tau=k)=1/(kH_n)$ for $k=1,\dots,n$, then we obtain the following prophet inequality
    $$
    \E[S_{\tau}^+]:=\sum_{k=1}^n\E[S_k^+\indicator{\tau=k}]=\sum_{k=1}^n\mathbb{P}(\tau=k)\E[S_k^+]=\frac{1}{H_n}\sum_{k=1}^n\frac{1}{k}\E[S_k^+]=\frac{1}{H_n}\E[M_n].
    $$
    Using independence in the second step.
    
    Now the impossibility result. 
    Denote by $F_p$ the increment distribution that assigns a probability of $p$ to $+1$ and $1-p$ to $-n$. By restricting to this parametric family, the following upper bound on the prophet constant is obtained:
    $$
    \mathrm{PC}:=\sup_\tau\inf_{F\in\mathcal{P}(\mathbb{R})}\frac{\E[S_\tau^+]}{\E[\max_{0\le k\le n}S_k]}\le\sup_{\tau}\inf_{\{F_p:p\in(0,1)\}}\frac{\E[S_\tau^+]}{\E[\max_{0\le k\le n}S_k]}.
    $$
    For this increment distribution, $S_k^+=k\indicator{M_n\ge k}$
    and $M_n$ is the number of initial $+1$'s, so $M_n=\min\{n,N\}$, where $N\sim\Geom(1-p)$. 
    Because of this the performance of the stopping rule can be written as
    \begin{align}
        \label{eq:tau-write-up}
        \E[S_\tau^+]&=\sum_{m=0}^n\Prob(M_n=m)\sum_{k=0}^mk\Prob(\tau=k|S_1=1,\dots,S_k=k)\nonumber\\
        &=\sum_{k=1}^nk\Prob(M_n\ge k)\Prob(\tau=k|S_1=1,\dots,S_k=k).
    \end{align}
    Note that the probabilities $\Prob(\tau=k|S_1=1,\dots,S_k=k)$ for $k=1,\dots,n$ sum to (less than) one, so the upper bound can be written as
    $$
    \mathrm{PC}\le \sup_{\{q_i\ge 0:q_1+\dots+q_n\le 1\}}\inf_{p\in(0,1)}\frac{\sum_{k=1}^nk\Prob(M_n\ge k)q_k}{\E[M_n]}=\sup_{\{q_i\ge 0:q_1+\dots+q_n\le 1\}}\inf_{p\in(0,1)}\frac{\sum_{k=1}^nkp^kq_k}{\frac{p}{1-p}(1-p^n)},
    $$
    using $\E[M_n]=\frac{p}{1-p}(1-p^n)$ and $\Prob(M_n\ge k)=p^k$ in the last step.
    Because it is optimal for the $q_k$'s to sum to one, the above upper bound can be rewritten as the following semi-infinite LP
    {
    \nobreakdisplays
    \begin{align*}
        &\sup && C\\
        &\text{subject to} &&\frac{1-p}{p(1-p^n)}\sum_{k=1}^nkp^kq_k\ge C  \text{ for all } p\in(0, 1),\\
        &&& C \in \mathbb{R}, \sum_{k=1}^n q_k = 1, \ q_k \geq 0 \text{ for } k = 1,\dots,n.
    \end{align*}
    }
    The dual of the above LP follows from the general form in Appendix \ref{sect:infty-lp-prelim} and is 
    {
    \nobreakdisplays
    \begin{align}
        \label{eq:no-info-dual}
        &\inf && \lambda\\
        &\text{subject to} &&\lambda\ge\int_0^1k\frac{(1-p)p^{k-1}}{1-p^n}\mathrm{d}\mu(p)\text{ for all }k=1,\dots,n,\nonumber\\
        &&& \mu\in\mathcal{P}((0,1))\text{, }\lambda\in\mathbb{R}\nonumber.
    \end{align}
    }
    Note that the measure $\mu$ is the dual variable associated with the constraints for $p \in (0,1)$ and $\lambda$ the dual variable identified with the normalization constraint.

    A feasible dual solution is now constructed.
    Motivated by the heuristic that an optimal solution should equalise the constraints, we choose $\mu$ such that $\int_0^1k\frac{(1-p)p^{k-1}}{1-p^n}\mathrm{d}\mu(p)$ is constant in $k$. This suggests the density $\mathrm{d}\mu(p)=\frac{1}{H_n}\frac{1-p^n}{1-p}\mathrm{d}p$. Note that $\int_0^1\frac{1-p^n}{1-p}\mathrm{d}p=H_n$, so it is a probability measure. Moreover,
    $$
    \int_0^1k\frac{(1-p)p^{k-1}}{1-p^n}\mathrm{d}\mu(p)=\frac{1}{H_n}\int_0^1kp^{k-1}\mathrm{d}p=\frac{1}{H_n},
    $$
    so $\lambda=1/H_n$ and the measure $\mu$ constitute a feasible solution of the dual. 
    By weak duality of semi-infinite LPs, the primal value is at most the value of the dual feasible solution constructed above. As the primal value is an upper bound on the prophet constant, the prophet constant is at most $1/H_n$.
    This matches the performance of the stopping rule considered earlier, establishing optimality of the prophet constant.
\end{proof}

\subsection{\texorpdfstring{$K$}{K} sample finite-horizon impossibility}
By extending the ideas behind Theorem \ref{thm:finite-horizon-no-information}, we can prove the following impossibility result.

\begin{theorem}
    \label{thm:finite-horizon-impossibility} 
    Let $(S_k)_{0\le k\le n}$ be a random walk with increments $X_t\iid F$ and for $i=1,\dots,K$ let $(S_{k,i})_{k\ge 0}$ be independent copies. Consider stopping rules $\stoppingRule = (r_k)_{k \geq 0}$ with $r_k$ based solely on the (realised) rewards $(S_1^+,\dots,S_k^+)$ and sample data $D=(S_{t,i}^+)_{0\le t\le n, 1\le i\le K}$, and define the $K$-sample finite-horizon prophet constant 
    $$
    C_{n,K}:=\sup_{\stoppingRule\in\mathcal{R}(\mathcal{D})}\inf_{F\in\mathcal{P}(\mathbb{R})}\frac{\E[S_{\stoppingTime}^+]}{\E[\max_{0\le k\le n}S_k]}.
    $$
    Then for $n\ge2K^2$
    $$
    C_{n,K}<\left(\frac{K}{K+1}\right)^{K+1}+\frac{6+6H_K}{H_n},
    $$
    where $H_n$ is the $n$-th harmonic number.
\end{theorem}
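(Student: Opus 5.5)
We follow the template of the impossibility half of Theorem~\ref{thm:finite-horizon-no-information}, restricting the adversary to the two-point family $F_p$ (mass $p$ on $+1$, $1-p$ on $-n$). Under $F_p$ every reward path, sample or target, is determined by the number of initial $+1$ steps. Thus the sample data $D$ reduces to $N_i=\min\{n,G_i\}$ with $G_i\iid\Geom(1-p)$, and the target path to $M_n=\min\{n,N\}$.

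\textbf{Step 1: reduction to an LP.} Because $(N_1,\dots,N_K)$ is a sufficient statistic, any stopping rule $\stoppingRule$ is described by numbers $q_k(m_1,\dots,m_K)\ge 0$ with $\sum_k q_k(m)\le 1$. Here $q_k(m)$ is the probability of stopping at height $k$ given sample counts $m$, provided the target has climbed to $k$. As in \eqref{eq:tau-write-up}, the performance is
\[
\frac{1-p}{p(1-p^n)}\sum_{m\in\{0,\dots,n\}^K}\Prob_p(N_1=m_1,\dots,N_K=m_K)\sum_{k=1}^n k p^k q_k(m).
\]
This gives a semi-infinite LP with variables $C$ and $q_k(m)$, one normalisation constraint per $m$, and one performance constraint per $p\in(0,1)$. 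By the weak duality from Appendix~\ref{sect:infty-lp-prelim}, we then need a probability measure $\mu$ on $(0,1)$ and multipliers $\lambda(m)\ge0$ satisfying
\[
\lambda(m)\ge \int_0^1 \frac{1-p}{1-p^n}\,k p^{k-1}\,\Prob_p(N=m)\,\dd\mu(p)\quad\text{for all }k,m .
\]
For such a pair, $C_{n,K}\le\sum_m\lambda(m)$.

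\textbf{Step 2: choice of dual.} We keep the no-information density $\dd\mu(p)=\frac{1}{H_n}\frac{1-p^n}{1-p}\dd p$, which is scale-invariant in $\E[G]$. With this choice the constraint becomes
\[
\lambda(m)\ge \frac{1}{H_n}\max_k \int_0^1 k p^{k-1}\Prob_p(N=m)\,\dd p .
\]
For sample counts $m$ with $s=\sum_i m_i$ and all $m_i<n$, we have $\Prob_p(N=m)=(1-p)^Kp^{s}$. The integral is then a Beta integral:
\[
k\int_0^1 p^{s+k-1}(1-p)^K\,\dd p = k\,\frac{K!\,(s+k-1)!}{(s+k+K)!}.
\]
We maximise over $k$; the maximiser is $k\approx s/K$, mirroring the rule ``stop near $\bar G$''. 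Using the resulting $\lambda(m)$, we sum over $m$, grouping by $s$ with multiplicity $\binom{s+K-1}{K-1}$. The bulk of the sum, $K\le s\le n/K$ roughly, gives $\frac{1}{H_n}\sum_s \frac{1}{s}\bigl(\frac{K}{K+1}\bigr)^{K+1}(1+O(\cdot))$. Since $\sum_s 1/s\approx H_n$, the leading term is $\bigl(\frac{K}{K+1}\bigr)^{K+1}$. This is the same optimisation as Lemma~\ref{lemma:pc-k-p-infimum}, since $\max_x x(s/(s+x))^{s}\cdots$ reproduces $(K/(K+1))^{K+1}/s$.

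\textbf{Step 3: error terms.} Three boundary regions should contribute at most $(6+6H_K)/H_n$:
\begin{itemize}
\item[(i)] Small $s$ ($s\lesssim K$). Here the maximiser is $k=1$ and the Beta ratio, bounded crudely by $\frac{K+1}{s+1}\cdot\frac{1}{\dots}$, contributes a harmonic-type sum of order $H_K/H_n$.
\item[(ii)] Large $s$, where the best $k$ would exceed $n$ and the cap $k\le n$ binds, or some $m_i=n$ so the truncated mass $p^n$ enters. Here the factor $1-p^n$ and the condition $n\ge2K^2$ are used to show the contribution is $O(1/H_n)$.
\item[(iii)] Discretisation. Replacing the integer maximiser by a real one, and the sum $\sum_s$ by $H_n$, introduces relative errors of order $K/s$. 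These again sum to $O(H_K)/H_n$.
\end{itemize}

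\textbf{Main obstacle.} The crux is Step 3. It requires a uniform-in-$s$ explicit inequality of the form
\[
\binom{s+K-1}{K-1}\max_k \frac{k\,K!\,(s+k-1)!}{(s+k+K)!}\le \frac{1}{s}\Bigl(\frac{K}{K+1}\Bigr)^{K+1}+\frac{c}{s^2}\cdot(\text{something summable to }H_K),
\]
with constants sharp enough to land on $6+6H_K$. We would first try writing the left side as $\E[kp^{k}]$-type quantities under a Beta$(s,K+1)$ law, i.e.\ a probabilistic reinterpretation. We would then bound the maximum via the log-concavity of $k\mapsto k\,\Gamma(s+k)/\Gamma(s+k+K+1)$ together with the elementary estimates $(1+x)^{-1}\le e^{-x+x^2}$.
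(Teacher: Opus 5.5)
Your Steps 1 and 2 match the paper's setup exactly. You use the same family $F_p$ (increments $+1$ and $-n$), the same semi-infinite LP, the same dual density $\frac{1}{H_n}\frac{1-p^n}{1-p}\,\dd p$, and the same multipliers $\lambda(m)=\frac{1}{H_n}\sup_{k\le n}kB(s+k,I+1)$, where $s=\sum_i m_i$ and $I=\#\{i:m_i<n\}$. The genuine gap is Step 3, which is the entire quantitative content of the theorem. There the three error regions are asserted rather than bounded, and the uniform inequality you name as the main obstacle is left unproved. Nothing in the proposal therefore yields $(6+6H_K)/H_n$, or even a rigorous $O(H_K)/H_n$.

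\textbf{Boundary tuples ($1\le I\le K-1$, i.e.\ some $m_i=n$).} The factor $1-p^n$ you invoke plays no role here. It cancels against the dual density exactly as in your own Step 2, so these are plain Beta terms with $s\ge n(K-I)$, and there are $\binom{K}{I}n^I$ of them. After relaxing $\sup_{k\le n}$ to $\sup_{k\in\mathbb{N}}$, their total is genuinely of order $\log K/H_n$. Your claimed $O(1/H_n)$ would therefore need an argument exploiting the cap $k\le n$, which you do not supply. The paper controls these terms as follows:
\begin{itemize}
\item it writes $\sum_{r\in\{0,\dots,n-1\}^I}(n(K-I)+\sum_i r_i)^{-I}$ as $\E[(K-I+\frac1n\sum_i\lfloor nU_i\rfloor)^{-I}]$ with $U_i$ i.i.d.\ uniform on $(0,1)$;
\item it replaces $\frac1n\lfloor nU_i\rfloor$ by $U_i$ at the cost of a factor $(1-I/n)^{-I}\le 1+4K^2/n$, which is where $n\ge 2K^2$ is used;
\item it combines $x^{-I}=\frac{1}{(I-1)!}\int_0^\infty t^{I-1}e^{-xt}\,\dd t$, the Laplace transform of the uniform law, and the binomial theorem over $I$, obtaining $\frac{1+4K^2/n}{eH_n}\int_0^\infty\frac{1-e^{-tK}-(1-e^{-t})^K}{t}\,\dd t\le\frac{(1+4K^2/n)(2+2\log K)}{eH_n}$.
\end{itemize}

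\textbf{Interior tuples ($I=K$).} Here $s$ runs up to $(n-1)K$, not $n/K$. Grouping with multiplicity $\binom{s+K-1}{K-1}$ overcounts once $s\ge n$, so the main term comes out as $(\frac{K}{K+1})^{K+1}H_{(n-1)K}$. It is reduced to $H_{n-1}$ only via $H_{(n-1)K}-H_{K^2}\le H_{n-1}$, which forces a separate $O(H_K)$ bound on the range $1\le s\le K^2$.

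That range is exactly where your suggested estimates are delicate. The natural bound $\sup_m mB(s+m,K+1)\le (K-1)!(\frac{K}{K+1})^{K+1}s^{-K}$, combined with the multiplicity $\binom{s+K-1}{K-1}$, leaves the factor $\frac{(s+K-1)!}{s!\,s^{K-1}}=\prod_{j=1}^{K-1}(1+j/s)$. This factor is at most $1+2K^2/s$ for $s>K^2$, but it is exponentially large in $K$ when $s$ is of order $K$. A relative error $O(K/s)$ in that range therefore requires keeping the cancellation between the numerator and denominator factorials, and you give no such argument.

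The paper instead splits $1\le s\le K^2$ into blocks $((j-1)K,jK]$, on each of which the maximiser $\lceil s/K\rceil=j$ is constant. It writes each block sum as $j\int_0^1x^{j-1}\sum_s\binom{s+K-1}{K-1}x^s(1-x)^K\,\dd x$. Using unimodality of the negative binomial pmf, together with the trivial bound $1$ on a window of width $O(1/j^2)$, it bounds each block by $O(1/j)$, for a total of at most $3H_K+\pi^2/6$.

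These estimates, together with the $I=0$ term $\frac{1}{(K+1)H_n}$ and the tail $s>K^2$, are what produce the stated constant. Without them, or proved substitutes, your proposal establishes only the duality framework, not the theorem.
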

The upper bound stated in Theorem \ref{thm:finite-horizon-impossibility} is chosen for clarity, rather than sharpness. In the proof a stronger, but considerably less transparent bound is given. Moreover, the constants in this upper bound can be optimised with additional effort, but we do not pursue this here to preserve the readability of the argument.\footnote{Earlier investigations specific to the single sample setting, for example, obtained the smaller error term $(1+\log 2)/(2H_n)$.} 

The proof builds on the duality framework introduced in Theorem \ref{thm:finite-horizon-no-information}. We again consider a two-point increment distribution and express the resulting upper bound as a semi-infinite LP. The main novelty is that the dual objective becomes substantially more complex. We control this objective by decomposing it into several regimes and combining analytic estimates with a probabilistic reinterpretation of the relevant sums. A detailed proof sketch is provided in Appendix \ref{appendix:infinite-horizon-impossibility-proof}.



\newpage
\bibliographystyle{plainnat}
\bibliography{referencesPaperRW}

\newpage
\appendix

\section{Identities and semi-infinite programs}
Here we collect our definitions of probability distributions, and give theory on semi-infinite linear programs (LPs).
\subsection{Probability distributions and standard identities}
We collect here the distribution parametrisations and identities used throughout.
For $s>-1$ and $t>0$, the Gamma integral is
\begin{equation}
    \label{eq:gamma-integral}
    \int_0^\infty x^se^{-tx}\dd x=\frac{\Gamma(s+1)}{t^{s+1}},\quad \Gamma(s+1)=s!.
\end{equation}
For $x,y>0$ the Beta function is
$$
B(x,y):=\int_0^1t^{x-1}(1-t)^{y-1}\dd t=\frac{\Gamma(x)\Gamma(y)}{\Gamma(x+y)}.
$$
The following parametrisations are used for the Geometric, Negative Binomial and Poisson distribution:
\begin{align*}
    &X\sim \Geom(1-p) &&\Rightarrow \Prob(X=k)=(1-p)p^k,\ k\in\mathbb{N}_0,\\
    &X\sim \text{NegBin}(r,1-p) &&\Rightarrow \Prob(X=k)=\binom{k+r-1}{r-1}(1-p)^r p^k,\ k\in\mathbb{N}_0,\\
    &X\sim \text{Pois}(\lambda) &&\Rightarrow \Prob(X=k)=e^{-\lambda}\lambda^k/k!.
\end{align*}

\subsection{Weak duality of infinite linear programs} \label{sect:infty-lp-prelim}
One of our primary technical tools is 
semi-infinite LPs, in which either the number of decision variables or the number of constraints is infinite. Classical linear programming duality extends to this setting under mild regularity conditions (see, for example, \citet{popescu2005semidefinite}).

Let $A$ and $B$ be countable sets, and let $C\subset\mathbb{R}$.
Let $f : A \times B \times C \to \mathbb{R}$ and $g : C \to \mathbb{R}$ be given functions.
The general form of the primal problem is
{
\nobreakdisplays
\begin{align*}
    &\sup && t\\
    &\text{subject to} &&\sum_{a\in A}\sum_{b\in B}f(a,b,\gamma) \eta_{a}(b)+g(\gamma)\ge t\text{ for all }\gamma\in C,\\
    &&& \eta_a\in\mathcal{P}(B)\text{ for all }a \in A\text{, }t\in\mathbb{R}.
\end{align*}
}
Throughout the paper, the decision variables of each LP are listed in the final line of the formulation.
The dual variables associated with the continuum of constraints are represented by a probability measure $\mu \in \mathcal{P}(C)$. The corresponding dual problem is
{
\nobreakdisplays
\begin{align*}
    &\inf && \sum_{a\in A}\lambda(a)+\int_Cg(\gamma)\dd \mu(\gamma)\\
    &\text{subject to} &&\lambda(a)\ge\int_C f(a,b,\gamma)\dd \mu(\gamma)\text{ for all }a\in A\text{ and }b\in B,\\
    &&& \mu\in\mathcal{P}(C)\text{, }\lambda:A\to\mathbb{R}.
\end{align*}
}
By weak duality, every feasible solution of the dual provides an upper bound on the value of the primal problem. In our proofs we explicitly construct such dual feasible solutions to certify primal bounds.
\section{Independent lemmata}

\begin{lemma}
    \label{lemma:random-walk-root-bound}
    Let $(S_k)_{k\ge 0}$ be a random walk with i.i.d. increments $X_i$ having distribution
    $$
    \Prob(X_i = x) = \begin{cases}
        p,&\text{for }x=1,\\
        1-p,&\text{for } x=-L.
    \end{cases}
    $$
    Let $M=\max_{k\ge 0}S_k$ denote the maximum of the random walk. Then $\Prob(M\ge n)=\xi^n$, where $\xi=1$ if $p\ge L/(L+1)$ (positive drift), and otherwise $\xi\in\left[p,\frac{L+1}{L}p\right]$.
\end{lemma}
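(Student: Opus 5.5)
The approach is the classical Cramér–Lundberg / exponential-martingale argument for skip-free-upward walks. Since increments are $+1$ or $-L$, the walk moves up only one unit at a time. So reaching level $n$ means passing through levels $1,2,\dots,n$ in turn. By the strong Markov property at the hitting time of level $j$, the events $\{$reach $j+1$ from $j\}$ are independent with a common probability $\xi:=\Prob(M\ge 1)$, which gives $\Prob(M\ge n)=\xi^n$ immediately. In the positive or zero drift case, $p\ge L/(L+1)$ means $\E[X_1]=p-L(1-p)\ge 0$. The walk then satisfies $\limsup S_k=\infty$ a.s. (by the strong law for positive drift, and by recurrence or oscillation for zero drift with nondegenerate increments), so $\xi=1$.

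For negative drift, I will identify $\xi$ as the root in $(0,1)$ of the equation
\[
\phi(s):=\E[s^{-X_1}]=p s^{-1}+(1-p)s^{L}=1.
\]
This comes from a first-step analysis. From $0$, with probability $p$ the walk hits $1$ at once. Otherwise it drops to $-L$ and must climb $L+1$ levels, each with probability $\xi$. Hence
\[
\xi=p+(1-p)\xi^{L+1}.
\]
The right-hand side minus $\xi$ is convex in $\xi$ and has roots at $1$ and possibly one root in $(0,1)$. The true hitting probability is the smallest root in $[0,1]$. This follows by the standard argument that $\xi$ is the monotone limit of hitting probabilities within finite time, starting the iteration from $0$. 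Under negative drift, the derivative of $g(s)=p+(1-p)s^{L+1}-s$ at $s=1$ is $(1-p)(L+1)-1>0$. Since $g(0)=p>0$, there is indeed a root in $(0,1)$, and $\xi<1$.

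The bounds then follow directly from the fixed-point equation. The lower bound $\xi\ge p$ is immediate since $(1-p)\xi^{L+1}\ge 0$. For the upper bound, I use $\xi\le1$ to get $(1-p)\xi^{L+1}\le(1-p)\xi$, so $\xi\le p+(1-p)\xi$, which is trivial and not enough. A sharper route uses convexity of $g$. Since $g(0)=p$ and $g(1)=0$, the chord gives $g(s)\le p(1-s)$, which bounds $g$ from above but does not bound the root. Instead I will use $g(\xi)=0$ together with concavity of $h(s)=\xi$-side: write $1-p=(1-\xi)/(1-\xi^{L+1})\cdot$ after rearranging. Using $\xi=p+(1-p)\xi^{L+1}$ gives
\[
\frac{1-\xi}{1-p}=1-\xi^{L+1}\le (L+1)(1-\xi).
\]
Rearranging yields $p\ge 1-(L+1)(1-\xi)\cdot(1-p)$, which is not directly the claim.

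The right route for the upper bound is
\[
\xi-p=(1-p)\xi^{L+1}\le(1-p)\,\xi\cdot\xi^{L}.
\]
Then I bound $\xi^L\le 1/(L+1)$ times something. From the equation, $(1-p)\xi^{L}\le \xi\cdot\frac{1-p}{\dots}$, so I plan to use the AM-GM-type fact that the root satisfies $(L+1)(1-p)\xi^{L}\le 1$. This holds because the root lies below the minimiser $s^*$ of $g$, where $g'(s^*)=0$ means $(L+1)(1-p)(s^*)^L=1$, and $g'<0$ on $(0,s^*)$. Hence
\[
\xi-p=(1-p)\xi^{L}\,\xi\le \frac{\xi}{L+1}.
\]
This gives $\xi\le \frac{L+1}{L}p$. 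I expect this step, locating the root to the left of the minimiser of the convex function, to be the only delicate point. It is also where the justification that $\xi$ is the smallest root (via monotone iteration from $0$) is needed.
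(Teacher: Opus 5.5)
Your proof is correct. Its skeleton is the same as the paper's. Your strong Markov argument at successive levels is the paper's observation that $M=G$ with $J_i\equiv 1$, and it gives $\Prob(M\ge n)=\xi^n$. First-step analysis then gives the same equation $\xi=p+(1-p)\xi^{L+1}$.

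You diverge from the paper in two places.

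\emph{Ruling out the root $\xi=1$.} You use the smallest-fixed-point argument. This is valid once made explicit: with $a_k=\Prob(\text{reach level }1\text{ within }k\text{ steps})$ one has $a_{k+1}\le p+(1-p)a_k^{L+1}$, so by induction $a_k\le r$ for every root $r\in[0,1]$. The paper's route is shorter. Under negative drift $S_k\to-\infty$ a.s., so $M<\infty$ and $\xi<1$, and convexity leaves exactly one root in $[0,1)$.

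\emph{The bounds.} The paper uses sign checks. It notes $f(p)>0$ and shows $f\bigl(\tfrac{L+1}{L}p\bigr)=\tfrac{x}{L+1}h(x)\le 0$ via the auxiliary function $h(x)=x^L(L+1-Lx)-1\le 0$ on $[0,1]$. That approach guesses the endpoint and then verifies it. You read the lower bound straight off the fixed-point identity. For the upper bound you use $g'(\xi)\le 0$, i.e.\ $(L+1)(1-p)\xi^L\le 1$, which holds because the smaller root of the convex $g$ lies left of its minimiser. Substituting into $\xi-p=(1-p)\xi^L\cdot\xi$ produces $\tfrac{L+1}{L}p$ without guessing, and even with strict inequality.

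Both arguments depend equally on knowing that $\xi$ is the smaller root. Yours needs it because $g'(1)>0$; the paper's needs it to turn the sign of $f$ at $x<1$ into a location relative to $\xi$.

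When writing this up, delete the abandoned attempts in your middle paragraph: the chord bound, the garbled rearrangement, and ``$\xi^L\le 1/(L+1)$ times something''. The final argument stands on its own.
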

\begin{proof}
    When $p\ge L/(L+1)$ the drift is non-negative, so $M$ is a.s. infinite, hence $\Prob(M\ge n)=1=\xi^n$ with $\xi=1$. 
    Suppose $p< L/(L+1)$. Because $L\in\mathbb{N}$, the running maximum of the random walk can only increase in steps of $+1$, i.e., $J_i=1$ when $I_i<\infty$. Therefore, by \eqref{eq:ladder-height-decomposition}, $M=\sum_{i=1}^GJ_i=G$. Since $G$ has a geometric distribution it follows that $\Prob(M\ge n)=\xi^n$ with $\xi \in[0,1)$. 
    
    By conditioning on $X_1$ we get that $\Prob(M\ge n) = \Prob(M\ge n \ | \ X_1 = 1)\Prob(X_1 = 1) + \Prob(M\ge n \ | \ X_1 = - L)\Prob(X_1=-L)$. Because the infinite-horizon process is Markovian, it follows that $\Prob(M\ge n \ | \ X_1 = 1) = \Prob(M \geq n-1)$ and $\Prob(M\ge n \ | \ X_1 = - L) = \Prob(M \geq n+ L)$. Therefore, the tail probability satisfies the recursion
    $$
    \Prob(M\ge n)=p\Prob(M\ge n-1)+(1-p)\Prob(M\ge n+L)\implies \xi^n=p\xi^{n-1}+(1-p)\xi^{n+L}.
    $$

    Therefore, $\xi\in[0,1)$ is a root of the polynomial $f(\xi):=(1-p)\xi^{L+1}-\xi+p$.
    First the root is shown to be unique.
    Note that $f$ is continuous, $f(0)=p>0$, $f(1)=0$, $f'(\xi)=(L+1)(1-p)\xi^L-1$, and that for $p< L/(L+1)$ that $f'(1)>0$. So a root in the interval $[0,1)$ exists. Next, note that $f'(0)=-1<0$ and that $f'$ is increasing on $\xi \ge 0$, so there is exactly one root in $[0,1)$.

    To provide bounds on this root consider the sign of $f$. For a lower bound the sign is positive and for an upper bound it is negative. 
    Observe that $f(p)=(1-p)p^{L+1}>0$, so $\xi\ge p$.
    Next the upper bound. Let $x=\frac{L+1}{L}p$, then
    $
    f\left(x\right)=\frac{1}{L+1}\left((L+1-Lx)x^{L+1}-x\right)=\frac{x}{L+1}h(x)
    $, where $h(x)=x^L(L+1-Lx)-1$. Observe that $f(x)$ and $h(x)$ have the same sign. Note that $h(0)=-1$, $h(1)=0$, and that $h'(x)=L(L+1)x^{L-1}(1-x)>0$ for $x\in(0,1)$, so $h(x)\le 0$ for all $x\in[0,1]$. For $p< L/(L+1)$, $x=\frac{L+1}{L}p<1$, so $f(x)\le 0$ and $\xi\le x=\frac{L+1}{L}p$, as $f$ and $h$ have the same sign. 
\end{proof}

\begin{lemma}
    \label{lemma:discrete-simplex-volume}
    The number of non-negative $K$ tuples with a sum of $s$ is $\binom{s+K-1}{K-1}$.
\end{lemma}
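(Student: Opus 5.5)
This is the standard stars-and-bars count, which I will do by an explicit bijection. Fix $s\in\Nzero$ and $K\ge 1$. To a tuple $(a_1,\dots,a_K)\in\Nzero^K$ with $a_1+\dots+a_K=s$ I associate a word of length $s+K-1$ over the alphabet $\{\star,|\}$. The word consists of $a_1$ stars, then a bar, then $a_2$ stars, then a bar, and so on, ending with $a_K$ stars. It therefore contains exactly $s$ stars and $K-1$ bars.

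Conversely, any such word determines a tuple: $a_1$ is the number of stars before the first bar, $a_j$ for $1<j<K$ is the number of stars between bar $j-1$ and bar $j$, and $a_K$ is the number of stars after the last bar. These entries are non-negative and sum to $s$.

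The two maps are mutually inverse, so they give a bijection between the tuples and the words. A word is determined by choosing which $K-1$ of the $s+K-1$ positions carry bars, so the number of words is $\binom{s+K-1}{K-1}$.

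The only point needing care is the degenerate case $K=1$. There is then exactly one tuple, $(s)$, and indeed $\binom{s}{0}=1$. The case $s=0$ is covered by the general argument, since it gives the single word of $K-1$ bars.

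As a cross-check, and in the spirit of the paper, the count is also the coefficient of $x^s$ in $(1-x)^{-K}=\bigl(\sum_{a\ge0}x^a\bigr)^K$. By the negative binomial identity already used in the proof of Theorem~\ref{thm:partial-stopping-K-samples-prophet}, this coefficient equals $\binom{s+K-1}{K-1}$.

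There is no real obstacle here. The step that needs the most attention is checking that the inverse map is well defined at the endpoints, where stars precede the first bar or follow the last bar, so that zero entries are handled correctly.
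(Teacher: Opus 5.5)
Your proof is correct but follows a different route from the paper. The paper only remarks that this is a standard fact provable by induction. The natural induction is on $K$: condition on the last coordinate $a_K=t\in\{0,\dots,s\}$, apply the hypothesis to the remaining $(K-1)$-tuple with sum $s-t$, and close with the hockey-stick identity $\sum_{u=0}^{s}\binom{u+K-2}{K-2}=\binom{s+K-1}{K-1}$, which follows from Pascal's rule.

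Your stars-and-bars bijection counts the tuples in one step and needs no auxiliary summation identity. The inductive route is more mechanical, but it requires that extra identity and its own small induction. You handle the edge cases $K=1$ and $s=0$ correctly.

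Your generating-function cross-check is also sound and not circular. Taking the expansion $(1-x)^{-K}=\sum_{s\ge0}\binom{s+K-1}{K-1}x^s$ from the generalised binomial theorem, rather than from a counting argument, makes the coefficient comparison independent of the lemma. The paper itself quotes that expansion without proof in the infinite-horizon prophet inequality. So, within the paper's own logic, the cross-check would only become a standalone proof once that expansion is justified.
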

\begin{proof}
    Standard fact. Prove with induction.
\end{proof}
\begin{lemma}
    \label{lemma:exponent-upper-bound}
    Let $m> 0$. If $0\le x\le 1/m$, then $(1+x)^m\le 1+2mx$. If $0\le x \le \min\{1/2,1/(2m)\}$, then $(1-x)^{-m}\le 1+4mx$.
\end{lemma}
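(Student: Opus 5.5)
This is an elementary calculus fact, and I will prove the two parts separately using concavity and convexity on the stated interval.

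\textbf{First inequality.} For $m>0$ let $\phi(x)=(1+x)^m$. I will use the weaker route that does not need case distinctions on $m$ being larger or smaller than one. By the mean value theorem,
\[
(1+x)^m-1=m x(1+\theta)^{m-1}
\]
for some $\theta\in[0,x]$. It therefore suffices to show that $(1+\theta)^{m-1}\le 2$ whenever $0\le\theta\le 1/m$.
\begin{itemize}
\item If $m\le 1$, the exponent $m-1$ is nonpositive, so $(1+\theta)^{m-1}\le 1\le 2$.
\item If $m>1$, then $(1+\theta)^{m-1}\le (1+1/m)^{m}\le e$. Since $e>2$, this crude bound is not enough.
\end{itemize}
To handle $m>1$ I instead use convexity directly. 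For $m\ge1$ the map $\phi$ is convex on $[0,1/m]$, so it lies below the chord joining its endpoints:
\[
\phi(x)\le 1+x\,m\bigl((1+1/m)^m-1\bigr)\le 1+(e-1)mx\le 1+2mx .
\]
For $m<1$ the map $\phi$ is concave, so it lies below its tangent at $0$, giving $\phi(x)\le 1+mx\le 1+2mx$.

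\textbf{Second inequality.} Let $\psi(x)=(1-x)^{-m}$, which is convex on $[0,1)$. On the interval $[0,x_0]$ with $x_0=\min\{1/2,1/(2m)\}$, convexity gives the chord bound
\[
\psi(x)\le 1+\frac{x}{x_0}\bigl(\psi(x_0)-1\bigr).
\]
It therefore suffices to show $\psi(x_0)-1\le 4mx_0$. I split into two cases according to which term attains the minimum.
\begin{itemize}
\item If $m\ge1$, then $x_0=1/(2m)$ and the target becomes $(1-1/(2m))^{-m}\le 3$. The function $t\mapsto -m\log(1-1/(2m))$ is decreasing in $m$ toward $1/2$ and equals $\log 2$ at $m=1$. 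Hence the left side is at most $2\le 3$.
\item If $m<1$, then $x_0=1/2$ and the target becomes $2^m-1\le 2m$. This holds by concavity of $m\mapsto 2^m$, since the chord from $0$ to $1$ gives $2^m\le 1+m\le 1+2m$.
\end{itemize}

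The main place needing care is the case split according to which term attains the minimum (and the convex/concave behaviour of $\phi$ depending on $m$). Each case, however, is a one-line endpoint check.
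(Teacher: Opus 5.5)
Your proof is correct, but it takes a different route from the paper. The paper never splits on $m$. It passes to logarithms, using $\log(1+x)\le x$ and $-\log(1-x)\le x/(1-x)\le 2x$ on $[0,1/2]$, to get $(1+x)^m\le e^{mx}$ and $(1-x)^{-m}\le e^{2mx}$. It then applies the single scalar bound $e^y\le 1+2y$ on $[0,1]$ (a chord bound for $\exp$) with $y=mx\le 1$, respectively $y=2mx\le 1$.

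You instead apply the chord/tangent bound directly to $(1+x)^m$ and $(1-x)^{-m}$ on the largest admissible interval. This forces the split $m\ge 1$ versus $m<1$: convex versus concave in the first part, and which term attains $\min\{1/2,1/(2m)\}$ in the second. Each case then needs its own endpoint check. Your route gives slightly sharper intermediate constants, e.g.\ $1+mx$ for $m<1$ and $1+(e-1)mx$ for $m\ge 1$. The paper's route is shorter and uniform in $m$.

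Three small points for a write-up:
\begin{itemize}
\item Drop the abandoned mean-value-theorem attempt.
\item In the case $m<1$ of the second part, $2^m\le 1+m$ on $[0,1]$ follows from \emph{convexity} of $m\mapsto 2^m$ (a convex function lies below its chord), not concavity. The inequality itself is right.
\item The monotonicity you assert for $m\ge 1$ is true but unproved. With $u=1/(2m)$ one has $-m\log(1-u)=\frac{1}{2}\sum_{k\ge 1}u^{k-1}/k$, which increases in $u$. The paper's estimate $-\log(1-u)\le 2u$ would give the needed $(1-1/(2m))^{-m}\le e\le 3$ directly.
\end{itemize}
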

\begin{proof}
    Note that $\log(1+x)\le x$, $-\log (1-x)\le x/(1-x)\le 2x$ for $x\in[0,1/2]$ and that $\exp(x)\le 1+2x$ for $x\in[0,1]$.
    For $0\le x\le 1/m$,
    $$
        (1+x)^m\le \exp(m\log(1+x))\le \exp(mx)\le 1+2mx.
    $$
    For $0\le x\le \min\{1/2,1/(2m)\}$,
    \begin{equation*}
        (1-x)^{-m}=\exp(-\log(1-x)m)\le \exp\left(2mx\right)\le 1+4mx.\qedhere
    \end{equation*}
\end{proof}

\begin{lemma}   
    \label{lemma:constraint-bound}
    Let $I\in\mathbb{N}$, $s\in\Nzero$, $\gamma(s,I)=\sup_{m\in\mathbb{N}}mB(s+m,I+1)$, where $B$ is the Beta function, then $\gamma(0,I)=1/(I+1)$ and for $s\ge 1$
    $$
    \gamma(s,I)=\left\lceil\frac{s}{I}\right\rceil B\left(s+\left\lceil\frac{s}{I}\right\rceil,I+1\right)\le (I-1)!\left(\frac{I}{I+1}\right)^{I+1}s^{-I}.
    $$
\end{lemma}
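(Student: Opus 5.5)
The plan is to write the Beta function explicitly and treat $f(m):=mB(s+m,I+1)$ as a sequence in $m$. Since $I\in\mathbb{N}$ and $s+m\ge 1$, we have
$$
B(s+m,I+1)=\frac{\Gamma(s+m)\,I!}{\Gamma(s+m+I+1)}=\frac{I!}{\prod_{j=0}^{I}(s+m+j)},
\qquad\text{so}\qquad
f(m)=\frac{I!\,m}{\prod_{j=0}^{I}(s+m+j)}.
$$
As in the footnote on the optimal stopping moment for geometric variables, I will locate the maximiser through the ratio of consecutive terms. That ratio is
$$
\frac{f(m+1)}{f(m)}=\frac{m+1}{m}\cdot\frac{s+m}{s+m+I+1}.
$$
Then $f(m+1)\ge f(m)$ if and only if $(m+1)(s+m)\ge m(s+m+I+1)$, which simplifies to $s-mI\ge 0$, that is, $m\le s/I$. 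Equality holds exactly when $m=s/I$.

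I will then split into two cases. For $s=0$ the condition $m\le 0$ never holds for $m\in\mathbb{N}$, so $f$ is strictly decreasing and the supremum is $f(1)=B(1,I+1)=1/(I+1)$. For $s\ge 1$, $f$ is nondecreasing up to $m=\lfloor s/I\rfloor+1$ and strictly decreasing afterwards, so the maximum is attained at $\lfloor s/I\rfloor+1$. If $I\nmid s$, this equals $\lceil s/I\rceil$. If $I\mid s$, then $f(s/I)=f(s/I+1)$, so $\lceil s/I\rceil=s/I$ also attains the maximum. In both cases $\lceil s/I\rceil\ge 1$, so it is an admissible $m$, and this gives the claimed closed form for $\gamma(s,I)$.

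For the inequality, I will drop the $+j$ terms in the denominator, which gives $f(m)\le I!\,m/(s+m)^{I+1}$ for every $m$, and then maximise $g(x)=x/(s+x)^{I+1}$ over real $x>0$. Setting $g'(x)=0$ gives $(s+x)=(I+1)x$, so $x=s/I$. There
$$
g(s/I)=\frac{s/I}{\bigl(s(I+1)/I\bigr)^{I+1}}=s^{-I}\frac{I^{I}}{(I+1)^{I+1}}.
$$
Multiplying by $I!=I\cdot(I-1)!$ gives exactly $(I-1)!\,(I/(I+1))^{I+1}s^{-I}$. There is no real obstacle here; the only point needing care is the tie case $I\mid s$ in identifying the maximiser as $\lceil s/I\rceil$.
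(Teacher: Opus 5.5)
Your proposal is correct and follows essentially the same route as the paper. Both arguments use the ratio test $f(m+1)/f(m)\ge 1 \iff m\le s/I$ to place the maximiser at $\lceil s/I\rceil$, then bound $\prod_{j=0}^{I}(s+m+j)\ge (s+m)^{I+1}$ and maximise $x(s+x)^{-(I+1)}$ over real $x>0$ at $x=s/I$. Your handling of the tie case, identifying the maximiser as $\lfloor s/I\rfloor+1$ and then using $f(s/I)=f(s/I+1)$ when $I\mid s$, is a bit cleaner than the paper's wording.
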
   
\begin{proof}
    Let $f(m)=mB(s+m,I+1)$. Observe that
    \begin{align*}
        \frac{f(m+1)}{f(m)}=\frac{m+1}{m}\frac{B(s+m+1,I+1)}{B(s+m,I+1)}=\frac{m+1}{m}\frac{s+m}{s+m+I+1}.
    \end{align*}
    The last equality follows from expanding out the Beta functions.
    This final ratio is greater than or equal to $1$ if and only if
    $$
    (m+1)(s+m)\ge m(s+m+I+1) \ \iff \ s-mI\ge 0 \ \iff \  m\le s/I.
    $$
    If $s=0$, then $m\ge 1>0=s/I$, so $f$ is decreasing. Consequently, $m=1$ attains the maximum value and $\gamma(0,I)=1/(I+1)$. 
    If $s\ge 1$, 
    then $f$ is increasing for $m\le s/I$ and decreasing for $m>s/I$, so $f$ is unimodal. Moreover, its maximiser lies at this boundary, and more specifically, it lies in the set 
    then $f$ is unimodal $\{\lfloor s/I\rfloor,\lceil s/I\rceil\}$
    If $s/I$ is an integer then $\lfloor s/I\rfloor=\lceil s/I\rceil$, so $m=\lceil s/I\rceil$ attains the maximum. If $s/I$ is not an integer, then $f(\lceil s/I\rceil)\ge f(\lceil s/I\rceil-1)=f(\lfloor s/I\rfloor)$, so $m=\lceil s/I\rceil$ attains the maximum again. Therefore, 
    $$
    \gamma(s,I)=\left\lceil\frac{s}{I}\right\rceil B\left(s+\left\lceil\frac{s}{I}\right\rceil,I+1\right).
    $$
    Now the upper bound,
    \begin{align*}
        \gamma(s,I)&=\sup_{m\in\mathbb{N}}mB(s+m,I+1)=\sup_{m\in\mathbb{N}}m\frac{(s+m-1)!I!}{(s+m+I)!}\\
        &=I!\sup_{m\in\mathbb{N}}m\prod_{i=0}^{I}\frac{1}{(s+m+i)}\le I!\sup_{m\in\mathbb{N}}m(s+m)^{-(I+1)}\\
        &\le I!\frac{s}{I}(s+s/I)^{-(I+1)}=(I-1)!\left(\frac{I}{I+1}\right)^{I+1}s^{-I},
    \end{align*}
    using for the second inequality that the mapping $m\mapsto m(s+m)^{-(I+1)}$ has a maximum over the positive reals at $m=s/I$. 
\end{proof}

\begin{lemma}
    \label{lemma:pc-k-p-infimum}
    Let $\mathrm{PC}(K,p)$ be defined as in the proof of Theorem \ref{thm:partial-stopping-K-samples-prophet}, then 
    $$
    \inf_{p\in(0,1)}\mathrm{PC}(K,p)=\left(\frac{K}{K+1}\right)^{K+1}.
    $$
\end{lemma}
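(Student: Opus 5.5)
The plan is to prove the two halves of the equality separately. First, a pointwise lower bound $\mathrm{PC}(K,p)\ge (K/(K+1))^{K+1}$ for every $p\in(0,1)$. Second, a limit computation as $p\to 1$ showing the bound is approached, so the infimum equals it (without being attained). Throughout I use the closed form \eqref{eq:poisson-rule-performance},
$$
\mathrm{PC}(K,p)=\left(\frac{1-p}{1-q(p)}\right)^{K+1}\bigl(1-q(p)(1-p)\bigr),\qquad q(p)=pe^{-(1-p)/K}.
$$
It is convenient to substitute $t=1-p\in(0,1)$.

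\emph{Limit.} As $t\to 0$ we have $q=(1-t)e^{-t/K}=1-t(1+1/K)+O(t^2)$. Hence $t/(1-q)\to K/(K+1)$ and $1-qt\to 1$, so $\mathrm{PC}(K,p)\to (K/(K+1))^{K+1}$. This gives $\inf_p\mathrm{PC}(K,p)\le (K/(K+1))^{K+1}$.

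\emph{Lower bound.} I bound the two factors separately.

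For the first factor, $e^{-x}\ge 1-x$ gives $q\ge (1-t)(1-t/K)$. Therefore
$$
1-q\le t\left(1+\tfrac1K\right)-\tfrac{t^2}{K}=\tfrac{t}{K}(K+1-t),
$$
and so $t/(1-q)\ge K/(K+1-t)$. Since the base is positive, raising to the power $K+1$ preserves the inequality.

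For the second factor, $q\le p=1-t$ gives $1-qt\ge 1-t+t^2$.

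Combining the two bounds, it suffices to show
$$
1-t+t^2\ \ge\ \left(\frac{K+1-t}{K+1}\right)^{K+1}=\left(1-\frac{t}{K+1}\right)^{K+1}\quad\text{for }t\in(0,1).
$$
This follows from $(1-x)^{m}\le e^{-mx}$, which bounds the right-hand side by $e^{-t}$, together with the alternating-series bound $e^{-t}\le 1-t+t^2/2\le 1-t+t^2$ for $t\in[0,1]$.

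The main obstacle is choosing elementary inequalities that are sharp enough near $t=0$, where the bound is tight. The first-order behaviour of $t/(1-q)$ must be kept exactly, which is why I use $e^{-x}\ge 1-x$ on $q$ rather than a cruder estimate. Only the second factor, which tends to $1$, can be bounded loosely.
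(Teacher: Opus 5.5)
Your proof is correct, but it takes a different route from the paper's. The paper computes the limit as $p\to1$ by L'H\^opital, which is equivalent to your first-order expansion of $q$. It then proves that $\mathrm{PC}(K,p)$ is \emph{decreasing} in $p$: it differentiates $\log\mathrm{PC}(K,p)$ and shows that the $(K+1)$-weighted term is at most $-1$ (via $e^x\ge 1+x$) while the remaining term is at most $1$.

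You avoid monotonicity entirely. With $t=1-p$, you bound the two factors separately: $t/(1-q)\ge K/(K+1-t)$ from $e^{-x}\ge 1-x$, and $1-qt\ge 1-t+t^2$ from $q\le 1-t$. This reduces everything to the elementary chain $(1-t/(K+1))^{K+1}\le e^{-t}\le 1-t+t^2$. All steps check out, and the bases are positive since $0<q<1$ for $t\in(0,1)$.

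Your argument is shorter and needs no derivative bookkeeping. It also gives an explicit quantitative bound, $\mathrm{PC}(K,1-t)\ge \left(\frac{K}{K+1-t}\right)^{K+1}(1-t+t^2)$, which is strictly above the target for $t>0$ and so confirms the infimum is not attained.

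The paper's route buys a stronger structural fact. Monotonicity identifies the infimum over any subinterval $(0,1-\alpha)$ exactly as $\mathrm{PC}(K,1-\alpha)$, for example in the geometrically killed model. Your bound only yields the lower bound $(K/(K+1))^{K+1}$ there, which is, however, all the paper actually uses.
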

\begin{proof}
    To establish the result we show that
    \begin{align}
       \mathrm{PC}(K,p) \ge  \lim_{p \uparrow 1}    \mathrm{PC}(K,p) = \left(\frac{K}{K+1}\right)^{K+1}. 
     \label{eq:poiss_analysis}
    \end{align}
    We start with the equality in \eqref{eq:poiss_analysis}. Note that $q'(p)=(1+p/K)e^{-(1-p)/K}$. By using $\lim_{p \rightarrow 1} (1-q(p)(1-p)) = 1$, and L'H\^{o}pital's rule on $\lim_{p \rightarrow 1} \frac{1-p}{1-q(p)}$, we get
    $$
        \lim_{p\to 1}\mathrm{PC}(K,p)=\lim_{p\to 1}\left(\frac{1-p}{1-q(p)}\right)^{K+1}=\left(\frac{-1}{-q'(1)}\right)^{K+1}=\left(\frac{K}{K+1}\right)^{K+1}.
    $$
    To show the first inequality in \eqref{eq:poiss_analysis}, we argue that $\mathrm{PC}(K,p)$ is a decreasing function in $p$. Note that it suffices to show that the derivative of $h(p) = \log \mathrm{PC}(K,p)$ is negative. We have that
    \begin{align*}
        h'(p)&=(K+1)\left(\frac{q'(p)}{1-q(p)}-\frac{1}{1-p}\right)+\frac{q(p)-(1-p)q'(p)}{1-(1-p)q(p)}.
    \end{align*}
    We show that $(K+1)\left(\frac{q'(p)}{1-q(p)}-\frac{1}{1-p}\right)\le -1$ and $\frac{q(p)-(1-p)q'(p)}{1-(1-p)q(p)}\le 1$, so the above is $\le 0$.
    To start, $e^x\ge 1+x$, so
    \begin{align*}
        &1+\frac{1-p}{K}\le e^{\frac{1-p}{K}}\\
        \iff \ &e^{\frac{p-1}{K}}\left(\frac{K+1}{K}\left(1-p\right)+p\right)\le 1\\
        \iff \ & \frac{K+1}{K}e^{\frac{p-1}{K}}(1-p)\le 1-pe^{\frac{p-1}{K}}\\
         \iff \ & (K+1)\left(\frac{K+p}{K}\right)e^{\frac{p-1}{K}}(1-p)\le (K+p)\left(1-pe^{\frac{p-1}{K}}\right)\\
         \iff \ & (K+1)q'(p)(1-p)\le (K+p)\left(1-q(p)\right)\\
         \iff \ & (K+1)\frac{q'(p)}{1-q(p)}\le \frac{K+p}{1-p} \ \left( =\frac{K+1}{1-p}-1 \right)\\
         \iff \ & (K+1)\left(\frac{q'(p)}{1-q(p)}-\frac{1}{1-p}\right)\le -1.
    \end{align*}
    This proves the first inequality. 
    We continue with the second inequality.
    Note that for $p\in(0,1)$ that $p-\left(1-p\right)\left(1+\frac{p}{K}-p\right)\le 1$. This follows from the fact that $f(p) =p-\left(1-p\right)\left(1+\frac{p}{K}-p\right)$ is increasing for any $K \geq 1$ on $[0,1]$ and $f(1) = 1$. Observe that $e^{\frac{p-1}{K}}\in[0,1]$, so we get
    \begin{align*}
        &e^{\frac{p-1}{K}}\left(p-\left(1-p\right)\left(1+\frac{p}{K}-p\right)\right)\le 1\\
         \iff \ &pe^{\frac{p-1}{K}}-(1-p)(1+p/K-p)e^{\frac{p-1}{K}}\le 1\\
        \iff \ &q(p)-(1-p)(q'(p)-q(p))\le 1\\
         \iff \ &q(p)-(1-p)q'(p)\le 1-(1-p)q(p)\\
         \iff \ &\frac{q(p)-(1-p)q'(p)}{1-(1-p)q(p)}\le 1.\qedhere
    \end{align*}
\end{proof}

\newpage
\section{The summation lemma}
Here we give a lemma that is pivotal in bounding the objective of the dual solutions found in Theorems \ref{thm:partial-stopping-K-samples-impossibility} and \ref{thm:finite-horizon-impossibility}. 
Most of the complexity of the proof comes from establishing an $O(\log K)$ error term. An $O(K)$ error term would require significantly less precise estimates.

The proof consists of two parts. 
In the first part we bound the objective for the case where $\sum_{i=1}^Kr_i$ is large. For this large sum regime quite crude methods can be applied to stay within an $O(\log K)$ error term.
For the small sum regime such crude estimates do not suffice, and we have to take more care to stay within an $O(\log K)$ error term. 
\begin{lemma}
    \label{lemma:interior-objective}
    Let $n,K\in\mathbb{N}$ with $n>K+1+1/K$ and let the function $\gamma:\Nzero\times\mathbb{N}\to\mathbb{R}$ be as in Lemma \ref{lemma:constraint-bound}, then
    $$
    \sum_{\boldsymbol{r}\in\{0,\dots,n-1\}^K}\gamma\left(\sum_{i=1}^Kr_i,K\right)<3+3H_K +\left(\frac{K}{K+1}\right)^{K+1}H_{n-1}.
    $$
\end{lemma}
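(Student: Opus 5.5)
We regroup the sum according to $s=\sum_{i=1}^K r_i$. Write $N_n(s)$ for the number of $\boldsymbol{r}\in\{0,\dots,n-1\}^K$ with $\sum_i r_i=s$. By Lemma~\ref{lemma:discrete-simplex-volume}, $N_n(s)\le\binom{s+K-1}{K-1}$, with equality for $s\le n-1$. The sum then equals $\sum_{s=0}^{K(n-1)}N_n(s)\gamma(s,K)$. The term $s=0$ contributes $\gamma(0,K)=1/(K+1)$. For $s\ge1$ we use the bound of Lemma~\ref{lemma:constraint-bound},
\[
\gamma(s,K)\le (K-1)!\left(\tfrac{K}{K+1}\right)^{K+1}s^{-K}.
\]
Since $\binom{s+K-1}{K-1}=\frac{(s+1)(s+2)\cdots(s+K-1)}{(K-1)!}$, each term is at most $\left(\frac{K}{K+1}\right)^{K+1}\frac{1}{s}\prod_{j=1}^{K-1}(1+j/s)$. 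The leading behaviour is therefore $\left(\frac{K}{K+1}\right)^{K+1}/s$, and summing it over $1\le s\le n-1$ produces exactly the main term $\left(\frac{K}{K+1}\right)^{K+1}H_{n-1}$. The rest of the proof consists of bounding the errors in three regimes.

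\textbf{Large $s$, $s\ge K^2$.} Here $\sum_{j<K} j/s\le 1/2$, so Lemma~\ref{lemma:exponent-upper-bound}, or the crude estimate $\prod_j(1+j/s)\le \exp(K^2/(2s))\le 1+K^2/s$, gives an excess of order $K^2/s^2$ per term. Summing over $s\ge K^2$ yields $O(1)$.

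\textbf{Beyond the horizon, $s\ge n$.} Here $N_n(s)<\binom{s+K-1}{K-1}$ and we must use the truncation. The plan is to bound $N_n(s)$ probabilistically: it equals $\binom{s+K-1}{K-1}$ times the probability that a uniform random composition of $s$ into $K$ parts has all parts below $n$. By a union bound this gives $N_n(s)\le K\binom{s-n+K-1}{K-1}$ (Lemma~\ref{lemma:discrete-simplex-volume} again). More simply, $N_n(s)\le\sum_{\text{parts}}$ counts tuples with the largest coordinate of size at least $s/K$. We then show that $\sum_{s\ge n}N_n(s)\gamma(s,K)\le \sum_{s\ge n}\binom{s+K-1}{K-1}(K-1)!\,c_K s^{-K}$ restricted to these tuples is $O(\log K)$. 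Heuristically, the tail $\sum_{s=n}^{Kn}1/s\approx \log K$ times the fraction of the simplex that survives, and this is where the $3H_K$ comes from. Concretely, I would bound $N_n(s)\le \frac{(Kn-s+K)^{K-1}}{(K-1)!}$ for $s$ near $Kn$ and $\binom{s+K-1}{K-1}$ otherwise, then compare with $\int_n^{Kn}\frac{dx}{x}=\log K\le H_K$.

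\textbf{Small $s$, $1\le s<\min(K^2,n)$.} This is the main obstacle. The crude bound via $(K-1)!s^{-K}$ loses too much, because $\prod_j(1+j/s)$ is huge when $s\ll K$. Here I would instead use the exact value $\gamma(s,K)=\lceil s/K\rceil B(s+\lceil s/K\rceil,K+1)$. For $s\le K$ this is $B(s+1,K+1)=\frac{s!\,K!}{(s+K+1)!}$, and multiplying by $\binom{s+K-1}{K-1}$ gives
\[
\frac{K}{(s+K)(s+K+1)}.
\]
Summing over $s\le K$ then yields at most $1$. For $K<s<K^2$, with $m=\lceil s/K\rceil$, I would write
\[
\binom{s+K-1}{K-1}mB(s+m,K+1)=\frac{mK}{\prod_{i=0}^{m}(s+i)}\cdot\frac{(s+m-1)!}{(s-1)!}\cdot\frac{K!\,(s+K-1)!}{(s+m+K)!\,(K-1)!\,K}\cdots
\]
and compare it to $\left(\frac{K}{K+1}\right)^{K+1}/s$ with relative error $O(1/m)=O(K/s)$. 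The excess summed over $K<s<K^2$ is then $O\!\left(\sum K/s^2\cdot K\right)$, and we need it to be $O(\log K)$. I expect this to give $\sum_{s=K}^{K^2}\frac{c}{s}\cdot\frac{K}{s}$-type terms bounded by $O(1)$ plus harmonic pieces $\le H_K$. Tracking the constants to land under $3+3H_K$ is the delicate part. If direct comparison fails, I would fall back on the probabilistic reinterpretation: $\binom{s+K-1}{K-1}B(\cdot)$ as a negative-binomial/Beta mixture probability, which can be summed in closed form.
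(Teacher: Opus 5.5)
Your skeleton matches the paper's: regroup by $s=\sum_i r_i$, use the crude bound $\gamma(s,K)\le (K-1)!\,c_K s^{-K}$ with $c_K=(K/(K+1))^{K+1}$ for large $s$, and use the exact $\gamma$ for small $s$. Your computations for $s=0$, for $1\le s\le K$, and for $s\ge K^2$ are correct. But there is a genuine gap.

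\textbf{The crux is unproved.} The lemma hinges on an absolute $O(\log K)$ bound for $K<s<K^2$, and you do not establish it. Your displayed factorisation is left unfinished. The excess you write, $O\bigl(\sum_{K<s<K^2} K/s^2\cdot K\bigr)$, is of order $K$, not $O(\log K)$. The rest is an expectation plus a pointer to a fallback.

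\textbf{Your regimes leave a hole.} When $n<K^2$ (allowed, since only $n\ge K+2$ is assumed), the range $n\le s<K^2$ is not covered. Your ``beyond the horizon'' method cannot handle it. The crude bound gives $\frac{c_K}{s}\prod_{j=1}^{K-1}(1+j/s)$, which is exponentially large in $K$ for $s\approx K$. The truncation does not rescue this, because for $s$ just above $n$ almost no tuples are excluded (for $n=K+2$, $s=n$, only $K$ of them are).

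\textbf{The union bound points the wrong way.} It gives $\binom{s+K-1}{K-1}-N_n(s)\le K\binom{s-n+K-1}{K-1}$, which is a \emph{lower} bound on $N_n(s)$, not an upper bound.

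\textbf{How to repair it.} Drop the coordinatewise truncation and split at $K^2$ regardless of $n$, as the paper does.
For $K^2<s\le (n-1)K$, use $N_n(s)\le\binom{s+K-1}{K-1}$ with your large-$s$ estimate. This yields $c_K(H_{(n-1)K}-H_{K^2}+2)\le c_K(H_{n-1}+2)$, because $H_{(n-1)K}-H_{n-1}\le\log K<H_{K^2}$; the extra $\log K$ at the top is paid for by starting at $K^2$.
For $1\le s\le K^2$, finish your own factorisation with $m=\lceil s/K\rceil$:
\[
\binom{s+K-1}{K-1}\gamma(s,K)=\frac{mK}{(s+m)(s+m+K)}\prod_{i=1}^{K-1}\frac{s+i}{s+m+i}\le\frac{mK}{(s+m)(s+m+K)}<\frac{1}{s+1},
\]
since $m\ge 1$ and $mK\le s+K-1$. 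So this whole block contributes less than $\log(K^2+1)\le 2\log K+\log 2$.
Adding $\gamma(0,K)\le 1/2$ and $2c_K\le 2/e$, the total is below $2+2H_K+c_KH_{n-1}$, which is inside the claimed bound.

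This one-line estimate replaces the paper's argument over blocks of width $K$, which uses the Beta integral and unimodality of the negative binomial to get $3H_K+\pi^2/6$ for the same range. Completed this way, your route is actually shorter than the paper's.
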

\begin{proof}
    To start, we split the sum into two parts.
    \begin{align*}
        \sum_{\boldsymbol{r}\in\{0,\dots, n-1 \}^K}\gamma\left(\sum_{i=1}^{K}r_i,K\right)&\le\sum_{\boldsymbol{r}\in\Nzero^K}\indicator{\sum_{i=1}^Kr_i\le (n-1)K}\gamma\left(\sum_{i=1}^{K}r_i,K\right)&\\
        &= \sum_{s=0}^{(n-1)K}\binom{s+K-1}{K-1}\gamma(s,K)&\text{(Lemma \ref{lemma:discrete-simplex-volume})}\\
        &=\gamma(0,K) +\sum_{j=1}^{K}\sum_{s=(j-1)K+1}^{jK}\binom{s+K-1}{K-1}\gamma(s,K)\\
        &+\sum_{s=K^2+1}^{(n-1)K}\binom{s+K-1}{K-1}\gamma(s,K).
    \end{align*}
    Note that splitting the sum is valid as long as $(n-1)K>K^2+1$, which is true because of the assumption $n>K+1+1/K$.
    The estimate used in the first line is crude, but suffices here.
    By Lemma \ref{lemma:constraint-bound} the first term $\gamma(0,K)$ is $1/(K+1)$.
    For the third term we find the following upper bound
    \begin{align*}
        \sum_{s=K^2+1}^{(n-1)K}\binom{s+K-1}{K-1}\gamma(s,K)&\le \left(\frac{K}{K+1}\right)^{K+1}\sum_{s=K^2+1}^{(n-1)K}\binom{s+K-1}{K-1}(K-1)!s^{-K}\\
        &=\left(\frac{K}{K+1}\right)^{K+1}\sum_{s=K^2+1}^{(n-1)K}\frac{(s+K-1)!}{s!}s^{-K}\\
        &\le\left(\frac{K}{K+1}\right)^{K+1}\sum_{s=K^2+1}^{(n-1)K}\left(1+\frac{K-1}{s}\right)^{K-1}\frac{1}{s}\\
        &\le\left(\frac{K}{K+1}\right)^{K+1}\sum_{s=K^2+1}^{(n-1)K}\left(1+\frac{2(K-1)^2}{s}\right)\frac{1}{s}\\
        &=\left(\frac{K}{K+1}\right)^{K+1}\left(H_{(n-1)K}-H_{K^2}+\sum_{s=K^2+1}^{(n-1)K}\frac{2(K-1)^2}{s^2}\right)\\
        &\le(H_{(n-1)K}-H_{K^2}+2)\left(\frac{K}{K+1}\right)^{K+1}.
    \end{align*}
    Using the upper bound on $\gamma$ from Lemma \ref{lemma:constraint-bound} in the first step, the fact that
    $$
    \frac{(s+K-1)!}{s!}\cdot s^{-K}
    = \prod_{j=1}^{K-1}(s+j) \cdot s^{-K}
    = \frac{1}{s}\prod_{j=1}^{K-1}\frac{s+j}{s}
    \leq \frac{1}{s}\prod_{j=1}^{K-1}\frac{s+K}{s}
    = \frac{1}{s}\left(1+\frac{K}{s}\right)^{K-1}
    $$
    in the third step, the first part of Lemma \ref{lemma:exponent-upper-bound} in the fourth step with $x = (K-1)/s$ and $m = (K-1)$ (so that indeed $x \leq 1/m$), and
    $$
        \sum_{s=K^2+1}^{(n-1)K}\frac{2(K-1)^2}{s^2}=\int_{K^2}^{(n-1)K}\frac{2(K-1)^2}{\lceil s\rceil^2} \dd s\le \int_{K^2}^{\infty}\frac{2(K-1)^2}{s^2} \dd s=\frac{2(K-1)^2}{K^2}\le 2
    $$
    in the last step. 
    
    Now the inner sum of the second term,
    \begin{align*}
        \sum_{s=(j-1)K+1}^{jK}\binom{s+K-1}{K-1}\gamma(s,K)&=j\sum_{s=(j-1)K+1}^{jK}\binom{s+K-1}{K-1}B(s+j,K+1)\\
        &=j\sum_{s=(j-1)K+1}^{jK}\binom{s+K-1}{K-1}\int_0^1x^{s+j-1}(1-x)^K\dd x\\
        &=j\int_0^1 x^{j-1}\sum_{s=(j-1)K+1}^{jK}\binom{s+K-1}{K-1}x^s(1-x)^K\dd x,
    \end{align*}
    where in the first equality, we use the fact that 
    $\frac{(j-1)K + 1}{K} \leq \frac{s}{K} \leq \frac{jK}{s}$, which implies that 
    $$j = \left\lceil \frac{(j-1)K + 1}{K} \right\rceil \leq \left\lceil \frac{s}{K} \right\rceil \leq \left\lceil \frac{jK}{K} \right\rceil = j $$
    and so $\gamma(s,K) = jB(s+j,K+1)$ by Lemma \ref{lemma:constraint-bound}; the second equality uses an equivalent definition of the Beta function; and the third equality interchanges the integral and the sum.
    
    The map $g(s) = \binom{s+K-1}{K-1}x^s(1-x)^K$ is the probability mass function (pmf) of the negative binomial distribution with parameters $K$ and $1-x$. This pmf is unimodal and has a mode at $\lfloor \frac{(K-1)x}{1-x}\rfloor$.\footnote{Unimodality follows from $g(s+1)/g(s)=x(s+K)/(s+1)$ being a decreasing function. The mode follows from determining the switch-over point.} 
    Consequently, if $x\le 1-\frac{K-1}{jK}$ then $(j-1)K+1\ge \frac{(K-1)x}{1-x}\ge \lfloor \frac{(K-1)x}{1-x}\rfloor$, meaning that the smallest value of $s$ lies right of the maximum of $g$ and so we can for every $s$ plug in the smallest value of $s$, namely $(j-1)L + 1$, leading to
    $$
    \sum_{s=(j-1)K+1}^{jK}\binom{s+K-1}{K-1}x^s(1-x)^K\le K \binom{(j-1)K+1+K-1}{K-1}x^{(j-1)K+1}(1-x)^K.
    $$
    Similarly, if $x\ge 1-\frac{K-1}{(j+1)K}$ then $jK\le\frac{(K-1)x}{1-x}-1\le \lfloor \frac{(K-1)x}{1-x}\rfloor$, so 
    $$
    \sum_{s=(j-1)K+1}^{jK}\binom{s+K-1}{K-1}x^s(1-x)^K\le K \binom{jK+K-1}{K-1}x^{jK}(1-x)^K.
    $$
    The above two cases fail to cover $x\in(1-(K-1)/(jK), 1-(K-1)/((j+1)K)$, but since the width of this interval is $O(1/j^2)$ it is sufficient to use that the integrand is less than one, as
    $$
    x^{j-1}\sum_{s=(j-1)K+1}^{jK}\binom{s+K-1}{K-1}x^s(1-x)^K\le \sum_{s=(j-1)K+1}^{jK}\binom{s+K-1}{K-1}x^s(1-x)^K\le 1,
    $$
    because the sum of probabilities is less than one. 
    These bounds can be exploited by splitting the integral into three intervals.
    \begin{align}
        &\equalSpace j\int_0^1 x^{j-1}\sum_{s=(j-1)K+1}^{jK}\binom{s+K-1}{K-1}x^s(1-x)^K\dd x \nonumber \\
        &=j\int_0^{1-\frac{K-1}{jK}} x^{j-1}\sum_{s=(j-1)K+1}^{jK}\binom{s+K-1}{K-1}x^s(1-x)^K\dd x \nonumber \\
        &+j\int_{1-\frac{K-1}{jK}}^{1-\frac{K-1}{(j+1)K}} x^{j-1}\sum_{s=(j-1)K+1}^{jK}\binom{s+K-1}{K-1}x^s(1-x)^K\dd x \nonumber \\
        &+j\int_{1-\frac{K-1}{(j+1)K}}^1 x^{j-1}\sum_{s=(j-1)K+1}^{jK}\binom{s+K-1}{K-1}x^s(1-x)^K\dd x \nonumber \\
        &\le j\int_0^{1-\frac{K-1}{jK}} x^{j-1}K\binom{(j-1)K+1+K-1}{K-1}x^{(j-1)K+1}(1-x)^K\dd x \nonumber \\
        &+j\left(1-\frac{K-1}{(j+1)K}-\left(1-\frac{K-1}{jK}\right)\right) \nonumber \\
        &+j\int_{1-\frac{K-1}{(j+1)K}}^1 x^{j-1}K\binom{jK+K-1}{K-1}x^{jK}(1-x)^K\dd x \nonumber \\
        &\le jK\binom{jK}{K-1}\int_0^1 x^{j+(j-1)K}(1-x)^K\dd x \nonumber \\
        & +\frac{K-1}{K}\frac{1}{j+1} \nonumber \\
        &+jK\binom{jK+K-1}{K-1}\int_0^1 x^{j(K+1)-1}(1-x)^K\dd x. \label{eq:final_three}
    \end{align}
    For the middle interval we make use of the fact that the integrand is less than one. The second inequality follows from integrating over a larger domain. 
    
    Finally, we bound the three quantities in \eqref{eq:final_three}.
    For the first term we split cases $j=1$ and $j\ge 2$. For $j=1$,
    $$
    K\binom{K}{K-1}\int_0^1 x(1-x)^K\dd x=\frac{K^2}{(K+2)(K+1)}\le1.
    $$
    For $j\ge 2$ it is
    \begin{align*}
        &\equalSpace jK\binom{jK}{K-1}\int_0^1 x^{j+(j-1)K}(1-x)^K\dd x\\&=jK\binom{jK}{K-1}\frac{(j+(j-1)K)!K!}{(jK+j+1)!}\\
        &=jK^2\frac{(jK-K+j)!}{(jK-K+1)!}\frac{(jK)!}{(jK+j+1)!}\\
        &=\frac{jK^{2}}{\left(jK-K+j+2\right)\left(jK-K+j+1\right)}\frac{(jK-K+j+2)!}{(jK-K+1)!}\frac{(jK)!}{(jK+j+1)!}\\
        &=\frac{jK^{2}}{\left(jK-K+j+2\right)\left(jK-K+j+1\right)}\prod_{i=1}^{j+1}\frac{jK-K+1+i}{jK+i}\\
        &\le\frac{jK^{2}}{\left(jK-K+j+2\right)\left(jK-K+j+1\right)}\\
        &\le\frac{jK^{2}}{\left(jK-K\right)\left(jK-K\right)}=\frac{j}{(j-1)^2}
    \end{align*}
    Lastly, the third term works the same as above
    \begin{align*}
        &\equalSpace jK\binom{jK+K-1}{K-1}\int_0^1 x^{j(K+1)-1}(1-x)^K\dd x\\&=jK\binom{jK+K-1}{K-1}\frac{(j(K+1)-1)!K!}{(j(K+1)+K)!}\\
        &=jK^{2}\frac{\left(jK+K-1\right)!}{(jK+K+j)!}\frac{(jK+j-1)!}{\left(jK\right)!}\\
        &=\frac{jK^{2}}{\left(jK+j+1\right)\left(jK+j\right)}\prod_{i=1}^{j+1}\frac{jK+i}{jK+K-1+i}\\
        &\le\frac{jK^{2}}{\left(jK+j+1\right)\left(jK+j\right)}\le\frac{jK^{2}}{\left(jK\right)\left(jK\right)}=\frac{1}{j}.
    \end{align*}
    So we obtain the bound $2+(K-1)/(2K)$ for $j=1$ and $\frac{j}{(j-1)^2}+\frac{K-1}{K(j+1)}+\frac{1}{j}$ for $j\ge 2$.
    Adding these bounds yields
    \begin{align*}
        \sum_{j=1}^{K}\sum_{s=(j-1)K+1}^{jK}\binom{s+K-1}{K-1}\gamma(s,K)&\le2+\frac{K-1}{2K}+\sum_{j=2}^{K}\left(\frac{j}{(j-1)^2}+\frac{K-1}{K(j+1)}+\frac{1}{j}\right)\\
        &\le\frac{5}{2}+\sum_{j=2}^{K}\left(\frac{1}{j-1}+\frac{1}{(j-1)^2}+\frac{1}{j+1}+\frac{1}{j}\right)\\
        &=H_{K-1}+H_K+H_{K+1}+\sum_{j=1}^{K-1}\frac{1}{j^2}\\
        &\le3H_{K}+\pi^2/6,
    \end{align*}
    using $H_{K-1}+H_{K+1}\le 2H_K$ in the last step.
    Lastly, adding all three terms together yields
    \begin{align*}
        \sum_{\boldsymbol{r}\in\{0,\dots,n-1\}^K}\gamma\left(\sum_{i=1}^Kr_i,K\right)<\frac{1}{K+1}+(H_{(n-1)K}-H_{K^2}+2)\left(\frac{K}{K+1}\right)^{K+1} +3H_K+\pi^2/6
    \end{align*}
    Under a minor loss, we obtain our final clean bound by using $H_{(n-1)K}-H_{K^2}\le H_{n-1}+H_{K}-H_{K^2}\le H_{n-1}$:
    \begin{align*}
        &\equalSpace \frac{1}{K+1}+(H_{(n-1)K}-H_{K^2}+2)\left(\frac{K}{K+1}\right)^{K+1} +3H_K+\pi^2/6\\
        &\le \frac{1}{K+1}+2\left(\frac{K}{K+1}\right)^{K+1}+\pi^2/6 +3H_K+\left(\frac{K}{K+1}\right)^{K+1}H_{n-1} \\
        &< 3+3H_K +\left(\frac{K}{K+1}\right)^{K+1}H_{n-1}< 3+3H_K +\left(\frac{K}{K+1}\right)^{K+1}H_{n} 
    \end{align*}
    Using for the penultimate bound that $1/(K+1) \leq 1/2$ and $2(K/(K+1))^{K+1} \leq 2/e$.
\end{proof}
\newpage

\section{Proof of Theorem \ref{thm:partial-stopping-K-samples-impossibility}}
\label{appendix:infinite-horizon-impossibility-proof}
First, a proof sketch. Informally, we want to use increments $+1$ and $-\infty$ such that the process stays below zero indefinitely after the $-\infty$ increment. Such a process is much more amenable to analysis, because it has no variability. Specifically, if the $-\infty$ increment occurs with probability $1-p$, then $S_k^+=k\indicator{N\ge k}$, where $N\sim\Geom(1-p)$.
    
Using $-\infty$ increments is not permitted, however, so we approximate it by means of  a $-L$ increments with $L$ large.
This approximation works, but it incurs a loss $\epsilon(p, L)$.    
The details of this reduction are in Lemma \ref{lemma:infinite-horizon-samples-finite-reduction}.
    
The next step is writing the resulting upper bound on the prophet constant as a semi-infinite LP. The resulting program is very similar to the program we derive for Theorem \ref{thm:finite-horizon-impossibility}, and most the analysis of it is borrowed from the proof of that theorem. All that is needed for proving Theorem \ref{thm:partial-stopping-K-samples-impossibility} is control on the $\epsilon(p,L)$ term. We do this with the tail bound from Lemma \ref{lemma:random-walk-root-bound} and the analysis in Lemma \ref{lemma:epsilon-integral-bound}.

We first state our two main lemmas, with their proofs deferred to the end of this section.
\begin{lemma}
\label{lemma:infinite-horizon-samples-finite-reduction}
    Let $L\in\mathbb{N}$, $p\in[0,L/(L+1))$ and let $(S_k)_{k\ge 0}$ be a random walk with increments $X_t$ following
    $$
    \Prob(X_t = x) = \begin{cases}
        p,&\text{for }x=1,\\
        1-p,&\text{for } x=-L.
    \end{cases}
    $$
    For $i=1,\dots,K$ let $(S_{k,i})_{k\ge 0}$ be independent copies, and let $\stoppingRule$ be a stopping rule that is based solely on the (realised) rewards $(S_1^+,\dots,S_k^+)$ and sample data $D=(S_{t,i}^+)_{t\ge 0,1\le i\le K}$. Then
    \begin{align*}
        \frac{\E[S_{\stoppingTime}^+]}{\E[\max_{k\ge 0}S_k]}&\le\frac{1}{\E[M]}\sum\limits_{\substack{L > j_i \ge 0 \\ i = 1,\ldots,K}}\left(\prod_{i=1}^K\Prob(N_i=j_i)\right)
        \sum_{l=1}^{L-1}l\Prob(N\ge l)\Prob(Q_{j_1,\dots,j_K}=l)+\epsilon(p,L).
    \end{align*}
    In the above equation
    \begin{itemize}
        \item $M=\max_{k\ge0}S_k$ is the prophets payoff;
        \item $N,N_1,\dots,N_K\iid\text{Geom}(1-p)$ indicate the initial run of $+1$'s;
        \item $
    \Prob(Q_{j_1,\dots,j_K}=l)=\Prob(\stoppingTime=l|D=((0,1,\dots,j_1,0,\dots),\dots,(0,1,\dots,j_K,0,\dots)), S_l=l)
    $ is the probability of stopping at index $l$ given the sample paths and the fact that we make it to $l$. 
        \item Lastly, the error term is given by
    $$
        \epsilon(p, L)=\Prob(N\ge L)+(K+1)\sum_{n=0}^\infty\Prob(N=n)\Prob(M\ge L-n).
    $$
    \end{itemize}
\end{lemma}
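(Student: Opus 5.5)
We will prove the bound by a coupling that splits the event space into a ``clean'' event, on which the target path and all $K$ sample paths behave exactly like the idealised $+1/-\infty$ process, and a ``dirty'' event whose contribution to the ratio is absorbed into $\epsilon(p,L)$.

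\textbf{Defining the clean event.} For each path, let $N$ (resp.\ $N_i$) be the length of its initial run of $+1$ increments, so $N,N_1,\dots,N_K\iid\Geom(1-p)$. After the first $-L$ step the path sits at $N-L$ (resp.\ $N_i-L$). The clean event $E$ requires, for the target path and for every sample path:
\begin{itemize}
\item[(i)] the run length is less than $L$, so $N<L$ and all $N_i<L$;
\item[(ii)] after the first $-L$ step, the remaining walk (a fresh copy by the Markov property) never climbs $L-N$ (resp.\ $L-N_i$) above its starting point, i.e.\ it never returns to a positive value.
\end{itemize}
On $E$ the reward paths are exactly $S_k^+=k\indicator{N\ge k}$ and $S_{k,i}^+=k\indicator{N_i\ge k}$. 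Hence the data is $D=((0,1,\dots,j_1,0,\dots),\dots)$ with $j_i=N_i$.

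\textbf{Bounding the dirty event.} By a union bound, the probability that (i) or (ii) fails is at most
$$
\Prob(N\ge L)+(K+1)\sum_n\Prob(N=n)\Prob(M\ge L-n).
$$
This counts (i) failing for the target path once, and (ii) failing for each of the $K+1$ paths. Failure of (i) for a sample path can be folded into the (ii)-type term, or into the final form of $\epsilon$, since $\Prob(M\ge L-n)=1$ for $n\ge L$.

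To turn this probability into a bound on the ratio, use $S^+_{\tau}\le M$. Since $M\ge 0$ and $\E[M]$ is the normaliser, we need
$$
\E\big[M\,\indicator{E^c}\big]/\E[M]\le\epsilon(p,L).
$$
This is the main obstacle. A bare probability bound does not suffice; what is needed is something like the conditional expectation of $M$ given a failure being comparable to $\E[M]$. For failures in sample paths, independence gives $\E[M\indicator{\text{sample bad}}]=\E[M]\Prob(\text{sample bad})$, contributing $K$ times the sum. For the target path, we would show $\E[M\indicator{\text{target bad}}]\le \E[M]\,\Prob(\text{target bad})$ up to the stated form, using the regenerative structure. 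The geometric structure of $M=G$ from Lemma~\ref{lemma:random-walk-root-bound} helps: conditionally on reaching level $L$ or re-crossing zero, the overshoot beyond it is again distributed as $M$ by memorylessness. The weighting in $\epsilon$ should fall out of this decomposition. If this proves too lossy, we would instead bound $\E[S^+_\tau\indicator{E^c}]$ by $\E[M\indicator{E^c}]$ via the ladder decomposition and memorylessness of $G$.

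\textbf{Evaluating the main term on $E$.} Condition on $N_i=j_i$ with $j_i<L$. The rule stops at $l$ with probability $\Prob(Q_{j_1,\dots,j_K}=l)$ given survival to $l$, and survival requires $N\ge l$. The gain is then $l$. Summing over $l\le L-1$ (stopping at $l\ge L$ on $E$ is impossible) and over the $j_i$, weighted by $\prod_i\Prob(N_i=j_i)$, and dropping the clean-event restriction on the target's continuation (which only enlarges the bound), gives exactly the displayed main term. Dividing by $\E[M]$ completes the argument.
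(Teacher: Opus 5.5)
\textbf{There is a genuine gap, and part of the statement cannot be proved.}

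Your union bound on $\Prob(E^c)$ and your evaluation of the main term on $E$ are fine. The step you flag as the main obstacle, however, genuinely fails for the target path. The inequality $\E[M\mathds{1}_{\{\text{target bad}\}}]\le \Prob(\text{target bad})\,\E[M]$ is false, because the target's bad events are positively correlated with $M\ge N$. Memorylessness does make the overshoot a fresh copy of $M$, but that copy sits on top of the level already reached. Concretely, $\E[M\mathds{1}_{\{N\ge L\}}]=\Prob(N\ge L)(L+\E[M])$. On $\{N=n\}$ with $n<L$, write the post-drop maximum as $n-L+M'$ with $M'$ a fresh copy of $M$; on the re-crossing event $\{M'\ge L-n\}$ you get $M=\max\{n,\,n-L+M'\}$, where $n-L+M'$ is again distributed as $M$. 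So bounding the target's dirty-event reward by the global maximum cannot produce $\epsilon(p,L)$.

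The paper avoids this by putting only the samples into the clean event, $A=\bigcap_i A_i$. Since $A$ is independent of the target, $\E[S^+_\tau\mathds{1}_{A^c}]\le\Prob(A^c)\E[M]\le K\Prob(A_1^c)\E[M]$, which is the part you did correctly. The target is handled by conditioning on $N=n<L$ and splitting the reward:
\begin{itemize}
\item Stopping at $l\le n$ pays exactly $l$; this gives the main term.
\item Stopping after the drop pays at most $\max_{k\ge n+1}S_k^+=(n-L+M')^+$, whose mean is exactly $\Prob(M\ge L-n)\E[M]$ by memorylessness.
\end{itemize}
The missing idea is to bound the post-drop reward by the post-drop maximum rather than by $M$. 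This is what yields the $(K+1)$-st copy of $\sum_n\Prob(N=n)\Prob(M\ge L-n)$.

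In the regime $N\ge L$, no argument can work, because the statement as written is false there. Take the rule $\tau\equiv L$. Then $\Prob(Q_{\boldsymbol{j}}=l)=0$ for all $l\le L-1$, so the main term vanishes. Any $-L$ step among the first $L$ increments makes $S_L<0$, so $\E[S_L^+]=Lp^L$. With $\xi$ as in Lemma~\ref{lemma:random-walk-root-bound}, the left-hand side is $Lp^L(1-\xi)/\xi\ge\frac{L^2}{L+1}p^{L-1}(1-\xi)$. On the other hand, $\epsilon(p,L)\le\bigl((K+2)+(e-1)(K+1)(L+1)\bigr)p^L$. Since the left side is of order $p^{L-1}$ and $\epsilon$ of order $p^L$, the inequality fails for small $p$. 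For example, $L=2$, $K=1$, $p=0.01$ gives a left-hand side of about $0.0198$ against $\epsilon\approx 7\cdot 10^{-4}$.

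The paper's proof has the same flaw, at the step that bounds $\sum_{n\ge L}\Prob(N=n)\E[S^+_{\tau(\boldsymbol{j})}\mid N=n]$ by $\Prob(N\ge L)\E[M]$. The repair is to add $L\Prob(N\ge L)/\E[M]\le Lp^{L-1}(1-p)$ to $\epsilon(p,L)$. Its integral against $\dd p/(1-p)$ over $(0,L/(L+1))$ is at most $1$. After the normalisation by $\log(1+L)$ it still vanishes, so part (b) of Theorem~\ref{thm:partial-stopping-K-samples} is unaffected.
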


\begin{lemma}
    \label{lemma:epsilon-integral-bound}
    Let $\epsilon(p,L)$ be as defined in Lemma \ref{lemma:infinite-horizon-samples-finite-reduction} and let 
    $\dd \nu_n(p)=\frac{1}{1-p}\dd p$, then
    $$
    \int_{0}^{\frac{L}{L+1}}\epsilon(p,L)\mathrm{d}\nu_n(p)<K+1+1/e.
    $$
\end{lemma}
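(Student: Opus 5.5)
The plan is to bound the two parts of $\epsilon(p,L)$ separately, using only geometric tail formulas and the root bound of Lemma~\ref{lemma:random-walk-root-bound}. Throughout write $a=L/(L+1)$ for the upper integration limit, and note that $\dd\nu_n(p)=\dd p/(1-p)$.

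\emph{First term.} Since $\Prob(N\ge L)=p^L$, expand $1/(1-p)=\sum_{m\ge0}p^m$. This gives
\[
\int_0^a \frac{p^L}{1-p}\dd p=\sum_{m\ge L}\frac{a^{m+1}}{m+1}\le \frac{a^{L+1}}{(L+1)(1-a)}=a^{L+1}=\Bigl(\frac{L}{L+1}\Bigr)^{L+1}<\frac1e ,
\]
where we used $(L+1)(1-a)=1$. This accounts for the $1/e$ in the statement.

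\emph{Second term.} It then suffices to show $\int_0^a \sum_{n\ge0}\Prob(N=n)\Prob(M\ge L-n)\,\dd\nu_n(p)<1$. On $p<a$, Lemma~\ref{lemma:random-walk-root-bound} gives $\Prob(M\ge m)=\xi^m$ with $p\le\xi\le\frac{L+1}{L}p$ for $m\ge 0$. Split the sum at $n=L$.
\begin{itemize}
\item For $n\ge L$ we have $\Prob(M\ge L-n)=1$, so these terms sum to $\Prob(N\ge L)=p^L$.
\item For $n<L$ we use $p^n\xi^{L-n}\le p^L(1+1/L)^{L-n}$, so these terms are at most
\[
(1-p)p^L\sum_{m=1}^{L}\Bigl(1+\tfrac1L\Bigr)^m=(1-p)p^L(L+1)\Bigl(\bigl(1+\tfrac1L\bigr)^L-1\Bigr).
\]
\end{itemize}
Integrating against $\dd p/(1-p)$, the factor $1-p$ cancels in the $n<L$ bound. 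Using $\int_0^a p^L\dd p=a^{L+1}/(L+1)$, this piece is at most $\bigl((1+1/L)^L-1\bigr)a^{L+1}=a-a^{L+1}$, since $(1+1/L)^La^{L+1}=a$. The $n\ge L$ piece is bounded exactly as in the first term, by $a^{L+1}$. Adding the two pieces gives $a=L/(L+1)<1$.

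\emph{Conclusion.} Multiplying the second-term bound by $K+1$ and adding the first-term bound gives
\[
\int_0^a\epsilon(p,L)\,\dd\nu_n(p)<\frac1e+(K+1)\frac{L}{L+1}<K+1+\frac1e .
\]
The only delicate step is the $n<L$ part. Crude bounds there, such as $(1+1/L)^L\le e$ combined with $\int_0^1 p^L\dd p$, give a constant near $e-1+1/e>1$. The factors $(1+1/L)^L$ and $a^{L+1}$ have to be kept together so that they telescope to $a$. The interchange of sum and integral is justified by nonnegativity.
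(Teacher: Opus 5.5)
Your proof is correct and follows essentially the same route as the paper: the same split at $n=L$, the root bound $\xi\le\frac{L+1}{L}p$ from Lemma~\ref{lemma:random-walk-root-bound}, the same geometric sum $(L+1)\bigl((1+1/L)^L-1\bigr)$, and effectively the bound $\frac{1}{1-p}\le L+1$ on $[0,\frac{L}{L+1}]$, which your series expansion reproduces. The only difference is that you keep $(1+1/L)^L$ exact so that it cancels against $(\frac{L}{L+1})^{L+1}$ to give $\frac{L}{L+1}$, whereas the paper bounds these two factors separately by $e$ and $1/e$ and still obtains $(eK+e+1)(\frac{L}{L+1})^{L+1}<K+1+1/e$; so your closing remark that the two factors \emph{must} be kept together is overstated, since integrating only up to $\frac{L}{L+1}$ already suffices.
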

With the above two lemmas the theorem can be proven.
\begin{cleverproof}{thm:partial-stopping-K-samples-impossibility}
    The prophet constant can be written as
    $$
    \mathrm{PC}(K)=\sup_{\stoppingRule\in\mathcal{R}(\mathcal{D})}\inf_{F\in\mathcal{P}(\mathbb{R})}\frac{\E[S^+_{\stoppingTime}]}{\E[\max_{k\ge 0}S_k]}.
    $$
    Fix $L\in\mathbb{N}$ and let $B=\{\mu\in\mathcal{P}(\mathbb{R}):\mu(+1)=p,\mu(-L)=1-p,p\in(0,L/(L+1))\}$. By Lemma \ref{lemma:infinite-horizon-samples-finite-reduction}
    \begin{align*}
        \mathrm{PC}(K)&\le \sup_{\stoppingRule\in\mathcal{R}(\mathcal{D})}\inf_{F\in B}\frac{\E[S^+_{\stoppingTime}]}{\E[\max_{k\ge 0}S_k]}\\
        &\le\sup_{\stoppingRule\in\mathcal{R}(\mathcal{D})}\inf_{F\in B}\left\{\frac{1}{\E[M]}\sum\limits_{\substack{L > j_i \ge 0 \\ i = 1,\ldots,K}}\left(\prod_{i=1}^K\Prob(N_i=j_i)\right)
        \sum_{l=1}^{L-1}l\Prob(N\ge l)\Prob(Q_{j_1,\dots,j_K}=l)+\epsilon(p,L)\right\}
    \end{align*}
    inheriting all the notation used in (the proof of) Lemma \ref{lemma:infinite-horizon-samples-finite-reduction}. 
    The probabilities $\Prob(Q_{j_1,\dots,j_K}=l)$ are defined implicitly through the stopping rule, so instead of taking a supremum over the stopping rules, the supremum can be taken over these probabilities directly, which we denote by $\boldsymbol{q}$. Moreover, using the observation that $\E[M]\ge \E[N]$, this yields
    \begin{align*}
        \mathrm{PC}(K)
        &\le\sup_{\boldsymbol{q}}\inf_{F\in B}\left\{\frac{1}{\E[N]}\sum\limits_{\substack{L > j_i \ge 0 \\ i = 1,\ldots,K}}\left(\prod_{i=1}^K\Prob(N_i=j_i)\right)
        \sum_{l=1}^{L-1}l\Prob(N\ge l)\Prob(Q_{j_1,\dots,j_K}=l)+\epsilon(p,L)\right\}\\
        &=\sup_{\boldsymbol{q}}\inf_{F\in B}\left\{\frac{1-p}{p}\sum\limits_{\substack{L > j_i \ge 0 \\ i = 1,\ldots,K}}(1-p)^Kp^{\sum_{i=1}^Kj_i}
        \sum_{l=1}^{L-1}lp^l\Prob(Q_{j_1,\dots,j_K}=l)+\epsilon(p,L)\right\}.
    \end{align*}
    Note that in optimality that there is no slack, so we have $\sum_{l=1}^{L-1}\Prob(Q_{j_1,\dots,j_K}=l)=1$ in optimality. Consequently, the above upper bound on the prophet constant can be reformulated as a semi-infinite LP as follows 
    {
    \nobreakdisplays
    \begin{align*}
        &\sup && C\\
        &\text{subject to} &&\sum\limits_{\substack{L > j_i \ge 0 \\ i = 1,\ldots,K}}\sum_{l=1}^{L-1}l(1-p)^{K+1}p^{l-1+\sum_{i=1}^Kj_i}\Prob(Q_{j_1,\dots,j_K}=l)
         +\epsilon(p,L)\ge C\;\forall p\in\left(0,\frac{L}{L+1}\right),
        \\
        &&& \kappa(\boldsymbol{j})\in\mathcal{P}(\{1,\dots,L-1\})\text{ for all }\boldsymbol{j}\in\{0,\dots,L-1\}^K\text{, }C\in\mathbb{R},
    \end{align*}
    }
    where $Q_{\boldsymbol{j}}\sim\kappa(\boldsymbol{j})$. The dual of the above LP is\footnote{It follows from the general form provided in Section \ref{sect:infty-lp-prelim}}
    {
    \nobreakdisplays
    \begin{align*}
        &\inf && \sum_{\boldsymbol{j}\in\{0,\dots,L-1\}^K}\lambda(\boldsymbol{j}) + \int_0^{\frac{L}{L+1}}\epsilon(p,L)\dd\mu(p)\\
        &\text{subject to} &&\lambda(\boldsymbol{j})\ge\int_0^\frac{L}{L+1}l(1-p)^{K+1}p^{l-1+\sum_{i=1}^Kj_i}\dd\mu(p)\\
        &&&\text{for all }\boldsymbol{j}\in \{0,\dots,L-1\}^K\text{ and }l\in\{1,\dots,L-1\},
        \\
        &&& \mu\in\mathcal{P}\left(0,\frac{L}{L+1}\right),\lambda:\{0,\dots,L-1\}^K\to\mathbb{R},
    \end{align*}
    }
    where the measure $\mu$ is associated with the continuum of constraints. By weak duality, any feasible dual solution gives an upper bound on the dual and therefore $\mathrm{PC}(K)$. 
    
    We will next construct a feasible dual solution $(\nu,(\zeta(\boldsymbol{j})_{\boldsymbol{j}})$. Consider the measure 
    $$
    \dd \nu(p)=\frac{1}{\log(1+L)}\frac{1}{1-p}\dd p \ \text{ and } \ \zeta(\boldsymbol{j})=\beta\left(\sum_{i=1}^Kj_i\right),
    $$
    where 
    \begin{align}
        \label{eq:a-beta-bound}
        \beta(s):=&\sup_{l\in\{1,\dots,L-1\}}\int_0^\frac{L}{L+1}l(1-p)^{K+1}p^{l-1+s}\dd\nu(p)\nonumber\\
        &=\sup_{l\in\{1,\dots,L-1\}}\frac{l}{\log(1+L)}\int_0^\frac{L}{L+1}(1-p)^{K}p^{l-1+s}\dd p\nonumber\\
        &\le \sup_{l\in\{1,\dots,L-1\}}\frac{l}{\log(1+L)} B(s+l,K+1)\nonumber\\
        &\le \frac{1}{\log(1+L)}\sup_{l\in\mathbb{N}}l B(s+l,K+1)
    \end{align}   
    Note that $\nu\in\mathcal{P}((0,L/(L+1))$ as $\int_0^{L/(L+1)}\dd \nu(p)=1$.
    By constructing $\zeta$ as we did, the constraint
    $$
    \zeta(\boldsymbol{j})\ge \int_0^\frac{L}{L+1}l(1-p)^{K+1}p^{l-1+s}\dd\nu(p)
    $$
    is satisfied for all $l$ and $\boldsymbol{j}$. As a result, the pair $(\nu,(\zeta(\boldsymbol{j})_{\boldsymbol{j}})$ is feasible. 
    
    What remains is calculating (an upper bound on) the associated objective. Using weak duality in the first step, we find
    \begin{align*}
        \mathrm{PC}(K)&\le\sum_{\boldsymbol{j}\in\{0,\dots,L-1\}^K}\zeta(\boldsymbol{j}) + \int_0^{\frac{L}{L+1}}\epsilon(p,L)\dd\nu(p) \\
        &\le \sum_{\boldsymbol{j}\in\{0,\dots,L-1\}^K}\zeta(\boldsymbol{j}) + \frac{K+1+1/e}{\log(1+L)}\\
        &\le \frac{3+3H_K+\left(\frac{K}{K+1}\right)^{K+1}H_{L-1}}{\log (1+L)} + \frac{K+1+1/e}{\log(1+L)}\\
        &\le \frac{3+3H_K+\left(\frac{K}{K+1}\right)^{K+1}(1+\log(1+L))}{\log (1+L)} + \frac{K+1+1/e}{\log(1+L)}\\
        &\le \left(\frac{K}{K+1}\right)^{K+1}+ \frac{K+3H_K+3+2/e}{\log(1+L)},
    \end{align*}
    using Lemma \ref{lemma:epsilon-integral-bound} for the second step.
    For the third step we use Lemma \ref{lemma:interior-objective} by noting that by \eqref{eq:a-beta-bound} that $\beta(s)\le \frac{\gamma(s,K)}{\log(1+L)}$, where $\gamma$ is an in the statement of Lemma \ref{lemma:interior-objective}.
    The fourth and fifth step use that $H_{L-1} \leq 1 + \log(1+L)$ and $(K/(K+1))^{K+1} \leq 1/e$, respectively.
    Taking $L\to\infty$ shows that $\mathrm{PC}(K)\le (K/(K+1))^{K+1}$.
\end{cleverproof}

\subsection{Remaining proofs}
We finish with the deferred proofs.
\begin{cleverproof}{lemma:infinite-horizon-samples-finite-reduction}
    Start by defining
    \begin{itemize}
        \item $M=\max_{k\ge 0}S_k$ : The overall maximum of the (true) walk;
        \item $N=\max\{k:S_j=j\text{ for all }j\le k\}$ : The moment at which the (true) walk has its first increment of $-L$;
        \item $N_i=\max\{k:S_{j,i}=j\text{ for all }j\le k\}$, The moment at which the $i$-th sampled walk has its first increment of $-L$.
        \item $A_i=\{S_{k,i}<0\text{ for all }k>N_i\}$ : The event that the $i$-th sampled walk does not exceed $0$ after its first increment of $-L$.
        \item $A=\bigcap_{i=1}^KA_i$. The event that none of the sampled walks exceed $0$ after the first increment of $-L$.
    \end{itemize}
    Take note of the following decomposition
    \begin{align}
        \E[S_{\stoppingTime}^+]=\E[S_{\stoppingTime}^+\mathds{1}_{\{A^C\}}]+\E[S_{\stoppingTime}^+\mathds{1}_{\{A\}}].
        \label{eq:random_walk_proof_split}
    \end{align}
    Using the union bound the first term is bounded as follows
    \begin{align}
        \E[S_{\stoppingTime}^+\mathds{1}_{\{A^C\}}]&\le \E[M\mathds{1}_{\{A^C\}}]= \Prob(A^C)\E[M]=\Prob\left(\bigcup_{i=1}^KA_i^C\right)\E[M]\nonumber\\
        &\le\sum_{i=1}^K\Prob(A_i^C)\E[M] =K\Prob(A_1^C)\E[M].\label{eq:random_walk_unionbound}
    \end{align}
    In the first inequality, the reward of the stopping time is (coarsely) upper bounded by the maximum $M$ of the (true) random walk. The second inequality is the union bound.

    Later, by sending $L\to\infty$, the term $\Prob(A_1^C)$ will be send to to zero. Consequently, only the second term in \eqref{eq:random_walk_proof_split} remains. This term is easier to handle, as conditional on the event $A$, the problem becomes much more tractable.
    For brevity, let $\tau(j_1,\dots,j_K)$ denote $\stoppingTime|D=((0,1,2,\dots,j_1,0,\dots),\dots,(0,1,2,\dots,j_K,0,\dots))$. The second term in \eqref{eq:random_walk_proof_split} can be bounded as follows:
    \begin{align*}
        \E[S_{\stoppingTime}^+\mathds{1}_{\{A\}}]&=\sum_{j_1\ge0,\dots,j_K\ge 0}\E\left[S_{\stoppingTime}^+\indicator{A\cap\bigcap_{i=1}^K\{N_i=j_i\}}\right]\\
        &=\sum_{L>j_1\ge0,\dots,L>j_K\ge 0}\E\left[S_{\stoppingTime}^+\indicator{A\cap\bigcap_{i=1}^K\{N_i=j_i\}}\right]\\
        &=\sum_{L>j_1\ge0,\dots,L>j_K\ge 0}\Prob\left(A\cap\bigcap_{i=1}^K\{N_i=j_i\}\right)\E\left[S_{\tau(j_1,\dots,j_K)}^+\right]\\
        &\le\sum_{L>j_1\ge0,\dots,L>j_K\ge 0}\left(\prod_{i=1}^K\Prob(N_i=j_i)\right)\E\left[S_{\tau(j_1,\dots,j_K)}^+\right]
    \end{align*}
    The truncation in the second step follows from the fact that the event $A_i$ cannot occur if $N_i\ge L$, as $S_{N_i+1,i}\ge 0$ in that case. 

    To bound the term $\E\left[S_{\tau(j_1,\dots,j_K)}^+\right]$, we first bound $\E[S_{\tau(j_1,\dots,j_K)}^+|N=n]$. Rewriting yields that
    \begin{align*}
        \E[S_{\tau(j_1,\dots,j_K)}^+|N=n]&=\E\left[S_{\tau(j_1,\dots,j_K)}^+\mathds{1}_{\{\tau(j_1,\dots,j_K)>n\}}+\sum_{l=0}^n S_l^+\mathds{1}_{\{\tau(j_1,\dots,j_K)=l\}}|N=n\right]\\
        &=\sum_{l=0}^n\E\left[ S_l^+\mathds{1}_{\{\tau(j_1,\dots,j_K)=l\}}|X_1=1,\dots,X_l=1\right]\\
        &+\E\left[S_{\tau(j_1,\dots,j_K)}^+\mathds{1}_{\{\tau(j_1,\dots,j_K)>n\}}|X_1=1,\dots,X_n=1,X_{n+1}=-L\right]\\
        &=\sum_{l=1}^n l \Prob(\tau(j_1,\dots,j_K)=l|X_1=1,\dots,X_l=1)\\
        &+\E\left[S_{\tau(j_1,\dots,j_K)}^+\mathds{1}_{\{\tau(j_1,\dots,j_K)>n\}}|X_1=1,\dots,X_n=1,X_{n+1}=-L\right].
    \end{align*}
    The conditioning in the first term can be written succinctly as $S_l=l$, and for the second term
    \begin{align*}
        &\equalSpace\E\left[S_{\tau(j_1,\dots,j_K)}^+\mathds{1}_{\{\tau(j_1,\dots,j_K)>n\}}|X_1=1,\dots,X_n=1,X_{n+1}=-L\right]\\
        &\le \E\left[\max_{k\ge n+1}S^+_k\mathds{1}_{\{\tau(j_1,\dots,j_K)>n\}}|X_1=1,\dots,X_n=1,X_{n+1}=-L\right]\\
        &\le \E\left[\max_{k\ge n+1}S^+_k|X_1=1,\dots,X_n=1,X_{n+1}=-L\right]\\
        &= \E\left[\max_{k\ge n+1}S^+_k|S_{n+1}=n-L\right]=\E[(M'+(n-L))^+],
    \end{align*}
    where $M'$ is an i.i.d. copy of $M$. Note that both $M$ and $M'$ have a geometric distribution (see Lemma \ref{lemma:random-walk-root-bound}) and that $\E[(M'+(n-L))^+] = \E[(M+(n-L))^+]$, but we use the i.i.d. copy $M'$ to emphasise that it should be viewed as a (distributionally equivalent) random walk that starts at $n - L$.
    Taken together, 
    \begin{align}
        \E[S_{\tau(j_1,\dots,j_K)}^+|N=n]\le\sum_{l=1}^nl\Prob(\tau(j_1,\dots,j_K)=l|S_l=l)+\E[(M'+(n-L))^+].
        \label{eq:random_walk_cond_bound}
    \end{align}
    Next, the above conditional bound is turned into an unconditional bound.
    \begin{align*}
        \E[S_{\tau(j_1,\dots,j_K)}^+]&=\sum_{n=0}^\infty\Prob(N=n)\E[S_{\tau(j_1,\dots,j_K)}^+|N=n]  \\
        &\le \sum_{n=0}^{L-1}\Prob(N=n)\E[S_{\tau(j_1,\dots,j_K)}^+|N=n]+\Prob(N\ge L)\E[M]  \\
        &\le \sum_{n=0}^{L-1}\Prob(N=n)\Bigg[\sum_{l=1}^nl\Prob(\tau(j_1,\dots,j_K)=l|S_l=l)\\&+\E[(M'+(n-L))^+]\Bigg]  +  \Prob(N\ge L)\E[M]  \\
        &= \sum_{n=0}^{L-1}\Prob(N=n)\sum_{l=1}^nl\Prob(\tau(j_1,\dots,j_K)=l|S_l=l)  \\
        &+\sum_{n=0}^{L-1}\Prob(N=n)\E[(M'+(n-L))^+]   +  \Prob(N\ge L)\E[M].
    \end{align*}
    In the second step, the reward of the stopping time is upper bounded by the maximum of the random walk. In the third step, \eqref{eq:random_walk_cond_bound} is plugged in.
    By swapping the order of summation we find
    \begin{align*}
        &\equalSpace\sum_{n=0}^{L-1}\Prob(N=n)\sum_{l=1}^nl\Prob(\tau(j_1,\dots,j_K)=l|S_l=l)\\
        & = \sum_{l=1}^{L-1} l\Prob(\tau(j_1,\dots,j_K)=l|S_l=l) \sum_{n=l}^{L-1} \Prob(N=n)\\
        &= \sum_{l=1}^{L-1}l\Prob(\tau(j_1,\dots,j_K)=l|S_l=l)(\Prob(N\ge l)-\Prob(N\ge L)) \\
        &\le \sum_{l=1}^{L-1}l\Prob(N\ge l)\cdot\Prob(\tau(j_1,\dots,j_K)=l|S_l=l).
    \end{align*}
    Taken together this yields
    \begin{align}
        \E[S_{\tau(j_1,\dots,j_K)}^+]&\le  \sum_{l=1}^{L-1}l\Prob(N\ge l)\cdot\Prob(\tau(j_1,\dots,j_K)=l|S_l=l) \nonumber \\
        &  +\sum_{n=0}^{L-1}\Prob(N=n)\E[(M'+(n-L))^+]+\Prob(N\ge L)\E[M].
         \label{eq:random_walk_uncond_bound}
    \end{align}
    To bound the second and third term in the final right hand side of \eqref{eq:random_walk_uncond_bound}, recall that $\Prob(A_1^C)$ (which is the same as $\Prob(A_i^C)$ for every $i$) is the probability that the first sampled random walk becomes non-negative after the first increment of $-L$, therefore 
    \begin{align}
        \Prob(A_1^C)=\sum_{n=0}^\infty\Prob(N_1=n)\Prob(M'\ge L-n).
        \label{eq:random_walk_ai_complement}
    \end{align}
    If $G$ is a geometrically distributed random variable and $k\in\mathbb{N}_0$, then by the memorylessness property
    \begin{equation}
    \label{eq:geometric-stop-loss}
        \E[(G-k)^+]=\E[(G-k)\indicator{G\ge k}]=\Prob(G\ge k)\E[G-k|G\ge k]=\Prob(G\ge k)\E[G].
    \end{equation}
    With this we find that
    \begin{align}
        &\equalSpace\sum_{n=0}^{L-1}\Prob(N=n)\E[(M'+(n-L))^+]+\Prob(N\ge L)\E[M] \nonumber \\
        &=\sum_{n=0}^{L-1}\Prob(N=n)\Prob(M'\ge L-n)\E[M']+\Prob(N\ge L)\E[M] \nonumber \\
        &=\left(\Prob(N\ge L)+\sum_{n=0}^{L-1}\Prob(N=n)\Prob(M'\ge L-n)\right)\E[M] \nonumber \\
        &\le (\Prob(N\ge L)+\Prob(A_1^C))\E[M],
        \label{eq:random_walk_ai}
    \end{align}
    using \eqref{eq:geometric-stop-loss} in the second step. Combining the bounds in \eqref{eq:random_walk_unionbound}, \eqref{eq:random_walk_uncond_bound} and \eqref{eq:random_walk_ai} to bound the right hand side quantities in \eqref{eq:random_walk_proof_split} yields
    \begin{align*}
        \E[S_{\stoppingTime}^+]&\le \sum_{\substack{L > j_i \ge 0 \\ i = 1,\ldots,K}}\left(\prod_{i=1}^K\Prob(N_i=j_i)\right)
        \Bigg(\sum_{l=1}^{L-1}l\Prob(N\ge l)\cdot\Prob(\tau(j_1,\dots,j_K)=l|S_l=l)\\&+(\Prob(N\ge L)+\Prob(A_i^C))\E[M]\Bigg)+K\Prob(A_i^C)\E[M]\\
        &\le \sum_{\substack{L > j_i \ge 0 \\ i = 1,\ldots,K}}\left(\prod_{i=1}^K\Prob(N_i=j_i)\right)
        \sum_{l=1}^{L-1}l\Prob(N\ge l)\cdot\Prob(\tau(j_1,\dots,j_K)=l|S_l=l)\\
        &+(\Prob(N\ge L)+(K+1)\Prob(A_1^C))\E[M].
    \end{align*}
    Dividing by $\E[\max_{k\ge 0}S_k]$ and plugging in the expression in \eqref{eq:random_walk_ai_complement} for $\Prob(A_1^C)$ gives the result.
\end{cleverproof}

\begin{cleverproof}{lemma:epsilon-integral-bound}
    Recall that 
    $$
        \epsilon(p,L)=\Prob(N\ge L)+(K+1)\sum_{n=0}^\infty\Prob(N=n)\Prob(M\ge L-n).
    $$
    By Lemma \ref{lemma:random-walk-root-bound}, $\Prob(M\ge k)\le (\frac{L+1}{L}p)^k$. Using that $N\sim\Geom(1-p)$ we find that
    \begin{align*}
        \epsilon(p,L)&\le\Prob(N\ge L)+(K+1) \left(\Prob(N\ge L)+\sum_{n=0}^{L-1}\Prob(N=n)\left(\frac{L+1}{L}p\right)^{L-n}\right)\\
        &=p^L+(K+1)\left(p^L+\sum_{n=0}^{L-1}(1-p)p^n\left(\frac{L+1}{L}p\right)^{L-n}\right)\\
        &=p^L+(K+1)\left(p^L+(1-p)p^L\sum_{n=0}^{L-1}\left(\frac{L+1}{L}\right)^{L-n}\right)\\
        &=p^L+(K+1)\left(p^L+(1-p)p^L\left(\left(\frac{L+1}{L}\right)^{L}-1\right)\left(L+1\right)\right)\\
        &\le p^L+(K+1)\left(p^L+(e-1)(1-p)p^L\left(L+1\right)\right)\\
        &=(K+2)p^L+(e-1)(K+1)(L+1)(1-p)p^L.
    \end{align*}
    Using $(1+1/x)^x\le e$ for $x\ge 0$ in the penultimate step.
    With the above bound the integral inequality can be established.
    \begin{align*}
        \int_{0}^{\frac{L}{L+1}}\epsilon(p,L)\mathrm{d}\nu_n(p)&\le \int_{0}^{\frac{L}{L+1}}\left((K+2)p^L+(e-1)(K+1)(L+1)(1-p)p^L\right)\mathrm{d}\nu_n(p)\\
       & = \int_{0}^{\frac{L}{L+1}}\left((K+2)p^L+(e-1)(K+1)(L+1)(1-p)p^L\right)\frac{1}{1-p}\dd p\\
       & = \int_{0}^{\frac{L}{L+1}}\left(\frac{K+2}{1-p}+(e-1)(K+1)(L+1)\right)p^L\dd p\\
       & \le \int_{0}^{\frac{L}{L+1}}\left(\frac{K+2}{1-L/(L+1)}+(e-1)(K+1)(L+1)\right)p^L\dd p\\
       & = (eK+e+1) \int_{0}^{\frac{L}{L+1}}(L+1)p^L\dd p\\
       &=(eK+e+1)\left(\frac{L}{L+1}\right)^{L+1}\le K+1+1/e.
    \end{align*}
    Using  $(1-1/x)^x\le 1/e$ for $x\ge 1$ in the last step.
\end{cleverproof}
\newpage
\section{Proof of Theorem \ref{thm:finite-horizon-impossibility}}
First, a proof sketch. 
The proof is more like Theorem \ref{thm:finite-horizon-no-information} than Theorem \ref{thm:partial-stopping-K-samples-impossibility}, as no random walk error term like $\epsilon(p,L)$ is required.
To start, consider an increment distribution with $+1$ and $-n$ increments and write the resulting upper bound as a semi-infinite LP.
Using the same dual measure as in Theorem \ref{thm:finite-horizon-no-information} we obtain a bound.
In Theorem \ref{thm:finite-horizon-no-information} this all but completes the proof as computing the objective \eqref{eq:no-info-dual} only consists of a single term.
This is the hard step in Theorem \ref{thm:finite-horizon-impossibility}, as the dual objective contains $(n+1)^K$ terms in the form of $$
\sum_{\boldsymbol{r}\in\{0,\dots, n \}^K}\beta\left(\sum_{i=1}^Kr_i,\sum_{i=1}^K\
\indicator{r_i < n}\right).
$$
The second argument represents the amount of samples that are within the horizon limit.
This sum is then split into two parts: All $r_i<n$, and at least one $r_i=n$.
The analysis of the first part offloaded to Lemma \ref{lemma:interior-objective}.
To compute the second part we cleverly interpret the sum over the hypercube as an expectation of (independent) discrete uniform random variables and by then approximating them by their uniform counterpart 
This turns the daunting sum into a tractable integral that is further simplified by using the moment generating function of the sum of uniform random variables.

\begin{proof}[Proof of Theorem \ref{thm:finite-horizon-impossibility}]

Consider, just as in the proof of Theorem \ref{thm:finite-horizon-no-information}, an increment distribution that assigns probability $p$ to $+1$ and probability $1-p$ to $-n$. 
Let $M_{n,i}=\max_{0\le k \le n}S_{k,i}$.
By generalising \eqref{eq:tau-write-up} we obtain that
\begin{align*}
    \E[S_\tau^+]&=\sum_{\boldsymbol{r}\in\{0,\dots,n\}^K}\left(\prod_{i=1}^K\Prob(M_{n,i}=r_i)\right)\E[S_\tau^+|M_{n,1}=r_1,\dots,M_{n,K}=r_K]\\
    &=\sum_{\boldsymbol{r}\in\{0,\dots,n\}^K}\left(\prod_{i=1}^K\Prob(M_{n,i}=r_i)\right)
    \sum_{k=1}^n k\Prob(M_n\ge k)\Prob(Y_{r_1,\dots,r_K}=k),
\end{align*}
where $Y_{r_1,\dots,r_K}=\tau|M_{n,1}=r_1,\dots,M_{n,K}=r_K,S_1=1,\dots,S_k=k$. The idea is that conditional on the sample paths, the stopping time is independent of the process again.
Using $\Prob(M_n=k)=(1-p)^{1\{k<n\}}p^k$, $\Prob(M_n\ge k)=p^k$ for $k = 1,\dots,n$, and $\E[M_n]=\frac{p}{1-p}(1-p^n)$, the prophet ratio can be further rewritten as
\begin{align*}
    \frac{\E[S_\tau^+]}{\E[M_n]}&=\frac{1}{\E[M_n]}\sum_{\boldsymbol{r}\in\{0,\dots,n\}^K}\left(\prod_{i=1}^K\Prob(M_{n,i}=r_i)\right)
    \sum_{m=1}^n m\Prob(M_n\ge m)\Prob(Y_{r_1,\dots,r_K}=m)\\
    &=\sum_{\boldsymbol{r}\in\{0,\dots,n\}^K}\sum_{m=1}^n\Prob(Y_{r_1,\dots,r_K}=m)(1-p)^{\sum_{i=1}^K1\{r_i < n\}}p^{\sum_{i=1}^Kr_i}mp^{m-1}\frac{1-p}{1-p^n}.
\end{align*}
First taking the supremum over all distributions of $Y$ and then the infimum over all $p\in(0,1)$ yields the upper bound on the prophet constant $C_{n,K}$. Using the same linearisation technique as in Theorem \ref{thm:finite-horizon-no-information}, this upper bound can be written as the following semi-infinite LP.
{
\nobreakdisplays
\begin{align*}
    &\sup && C\\
    &\text{subject to} &&\sum_{\boldsymbol{r}\in\{0,\dots,n\}^K}\sum_{m=1}^n\Prob(Y_{r_1,\dots,r_K}=m)m(1-p)^{\sum_{i=1}^K1\{r_i < n\}}p^{m-1+\sum_{i=1}^Kr_i}\frac{1-p}{1-p^n}\ge C\\&&&\qquad\qquad\qquad\qquad\qquad\qquad\qquad\qquad\qquad\qquad\qquad\qquad\text{ for all } p\in(0,1),\\
    &&& \kappa(\boldsymbol{r})\in\mathcal{P}(\{1,\dots,n\})\text{ for all }\boldsymbol{r}\in\{0,\dots,n\}^K\text{, }C\in\mathbb{R}.
\end{align*}
}
By comparing to our general form of the semi-infinite LP, it is clear that the dual of the above problem is
    {
    \nobreakdisplays
    \begin{align*}
        &\inf && \sum_{\boldsymbol{r}\in\{0,\dots,n\}^K}\lambda(\boldsymbol{r})\\
        &\text{subject to} &&\lambda(\boldsymbol{r})\ge \int_0^1(1-p)^{\sum_{i=1}^K1\{r_i < n\}}p^{\sum_{i=1}^Kr_i}mp^{m-1}\frac{1-p}{1-p^n}\dd\mu (p)\\&&&\qquad\qquad\qquad\:\text{for all }\boldsymbol{r}\in\{0,\dots,n\}^K\text{ and }m\in\{1,\dots,n\},\\
        &&&\mu\in\mathcal{P}((0,1))\text{, }\lambda:\{0,\dots,n\}^K\to\mathbb{R},
    \end{align*}
    }
    where the measure $\mu$ is associated with the continuum of constraints.
 
    We will first provide a feasible dual solution $(\nu,\zeta)$ and then provide an upper bound on the objective. Consider the measure $\dd \nu(p)=\frac{1}{H_n}\frac{1-p^n}{1-p}\dd p$ (recall from Theorem \ref{thm:finite-horizon-no-information} that it is a probability measure  and let $\zeta(\boldsymbol{r})=\beta\left(\sum_{i=1}^Kr_i,\sum_{i=1}^K\
    \indicator{r_i < n}\right)$, where
    \begin{align*}
        \beta(s,I)&:=\sup_{m\in\{1,\dots,n\}}\int_0^1(1-p)^Ip^smp^{m-1}\frac{1-p}{1-p^n}\dd\nu (p)\\
        &=\sup_{m\in\{1,\dots,n\}} \frac{m}{H_n}\int_0^1(1-p)^Ip^{s+m-1}\dd p= \frac{1}{H_n}\sup_{m\in\{1,\dots,n\}}mB(s+m,I+1).
    \end{align*}
    By construction the pair $(\nu,\zeta)$ is a feasible solution to the dual (for similar reasons as the corresponding arguments in Theorem \ref{thm:finite-horizon-no-information}). In particular, $\beta(s,I)$ is defined so that $\zeta(\boldsymbol{r})$ satisfies the constraints.

    The objective of this dual feasible solution is difficult to calculate, so instead an upper bound will be determined.
    Note that this upper bound is still an upper bound for the primal problem by weak duality.
    Our first step is the following decomposition which results in more well-behaved sums.
    \begin{align}
        \label{eq:hypercube-decomposition}
        \sum_{\boldsymbol{r}\in\{0,\dots, n \}^K}\zeta(\boldsymbol{r})&=\sum_{\boldsymbol{r}\in\{0,\dots, n \}^K}\beta\left(\sum_{i=1}^Kr_i,\sum_{i=1}^K\
    \indicator{r_i < n}\right)\nonumber \\
    &=\beta(nK,0)+\sum_{I=1}^K\binom{K}{I}\sum_{\boldsymbol{r}\in\{0,\dots, n-1 \}^I}\beta\left(n(K-I)+\sum_{i=1}^{I}r_i,I\right) \nonumber \\
    & =\beta(nK,0)+ \sum_{\boldsymbol{r}\in\{0,\dots, n-1 \}^K}\beta\left(\sum_{i=1}^{K}r_i,K\right) \nonumber \\
    & \equalSpace+ \sum_{I=1}^{K-1}\binom{K}{I}\sum_{\boldsymbol{r}\in\{0,\dots, n-1 \}^I}\beta\left(n(K-I)+\sum_{i=1}^{I}r_i,I\right)
    \end{align}
    where the second term corresponds to the case $I = K$ in the first summation.
    The first term is
    \begin{equation}
        \label{eq:term-one}
        \beta(nK,0)=\sup_{m\in\{1,\dots,n\}}\frac{m}{H_n}B(nK+m,1)=\sup_{m\in\{1,\dots,n\}}\frac{1}{H_n}\frac{m}{nK+m}=\frac{1}{H_n}\frac{1}{K+1}.
    \end{equation}
    The second term, the case $I=K$, is covered by Lemma \ref{lemma:interior-objective} which shows that
    \begin{equation}
        \label{eq:term-two}
        \sum_{\boldsymbol{r}\in\{0,\dots, n-1 \}^K}\beta\left(\sum_{i=1}^{K}r_i,K\right)<\left(\frac{K}{K+1}\right)^{K+1}+\frac{3+3H_K}{H_n}.
    \end{equation}
    Using that $\beta(s,K)\le\frac{1}{H_n}\gamma(s,K)$, where $\gamma$ is as in the statement of Lemma \ref{lemma:interior-objective}.
    What remains is 
    $$
    \sum_{I=1}^{K-1}\binom{K}{I}\sum_{\boldsymbol{r}\in\{0,\dots, n-1 \}^I}\beta\left(n(K-I)+\sum_{i=1}^{I}r_i,I\right).
    $$
    Let $U_i\iid\text{Unif}(0,1)$ and note that $\lfloor n U_i\rfloor$ has a uniform distribution over $\{0,\dots,n-1\}$. Using this, 
    \begin{align*}
        \sum_{\boldsymbol{r}\in\{0,\dots, n-1 \}^I}\beta\left(n(K-I)+\sum_{i=1}^{I}r_i,I\right)&< \frac{(I-1)!}{eH_n}\sum_{\boldsymbol{r}\in\{0,\dots, n-1 \}^I}\left(n(K-I)+\sum_{i=1}^{I}r_i\right)^{-I}\\
        &= \frac{(I-1)!}{eH_n}\E\left[\left(K-I+\frac{1}{n}\sum_{i=1}^I\lfloor nU_i \rfloor\right)^{-I}\right]\\
        &\le \frac{(I-1)!}{eH_n}\E\left[\left(K-I+\sum_{i=1}^IU_i -\frac{I}{n}\right)^{-I}\right].
    \end{align*}
    In the first step we use the bound from Lemma \ref{lemma:constraint-bound} and $(I/(I+1))^{I+1}\le 1/e$. In the second step that $$\left(n(K-I)+\sum_{i=1}^{I}r_i\right)^{-I} = n^{-I}\left(K-I+\frac{1}{n}\sum_{i=1}^{I}r_i\right)^{-I}$$ and that $n^{-I}$ is the probability of seeing a certain $\boldsymbol{r} \in \{0,\dots,n-1\}^I$ under the uniform distribution over this set. Finally, in the third inequality that $\frac{1}{n}\lfloor nU_i \rfloor\ge U_i-\frac{1}{n}$
    We now make use of $n\ge 2K^2$ to obtain
    \begin{align*}
        \left(K-I+\sum_{i=1}^IU_i -\frac{I}{n}\right)^{-I}&=\left(1-\frac{I/n}{K-I+\sum_{i=1}^IU_i}\right)^{-I}\left(K-I+\sum_{i=1}^IU_i\right)^{-I}\\
        &\le \left(1-I/n\right)^{-I}\left(K-I+\sum_{i=1}^IU_i\right)^{-I}\\
        &\le (1+4I^2/n)\left(K-I+\sum_{i=1}^IU_i\right)^{-I}\qquad\qquad\text{(Lemma \ref{lemma:exponent-upper-bound})}\\
        &\le (1+4K^2/n)\left(K-I+\sum_{i=1}^IU_i\right)^{-I}.
    \end{align*}
    Using for the first inequality $K-I + \sum_{i=1}^I U_i \geq 1$.
    Because of the above bound, we can work with $U_i$ instead of $\frac{1}{n}\lfloor nU_i \rfloor$. 
    The key identity to continue is a rearranged version of \eqref{eq:gamma-integral}
    $$
    x^{-I}=\frac{1}{(I-1)!}\int_0^\infty t^{I-1}e^{-xt}\dd t.
    $$
    Plugging in $x = (K - I + \sum_{i=1}^I U_i)$ and taking the expectation then gives
    \begin{align*}
        \frac{(I-1)!}{eH_n}\E\left[\left(K-I+\sum_{i=1}^IU_i\right)^{-I}\right]&=\frac{1}{eH_n}\E\left[\int_0^\infty t^{I-1}e^{-t\left(K-I+\sum_{i=1}^IU_i\right)}\dd t\right]\\
        &=\frac{1}{eH_n}\int_0^\infty t^{I-1}e^{-t\left(K-I\right)}\E\left[e^{-t\sum_{i=1}^IU_i}\right]\dd t\\
        &=\frac{1}{eH_n}\int_0^\infty t^{I-1}e^{-t\left(K-I\right)}\left(\frac{1-e^{-t}}{t}\right)^{I}\dd t\\
        &=\frac{1}{eH_n}\int_0^\infty \frac{(e^t-1)^Ie^{-tK}}{t} \dd t,
    \end{align*}
    using the moment generating function of the uniform distribution in the third step, i.e., that $\mathbb{E}[e^{-tU}] = (1-e^{-t})/t$ if $U$ is a uniform random variable on $[0,1]$, and the independence of the $U_i$ for $i = 1,\dots,I$.
    Finally, 
    \begin{align}
        \label{eq:term-three}
        &\equalSpace\sum_{I=1}^{K-1}\binom{K}{I}\sum_{\boldsymbol{r}\in\{0,\dots, n-1 \}^I}\beta\left(n(K-I)+\sum_{i=1}^{I}r_i,I\right)\nonumber\\
        &\le \frac{1+4K^2/n}{eH_n}\sum_{I=1}^{K-1}\binom{K}{I}\int_0^\infty \frac{(e^t-1)^Ie^{-tK}}{t} \dd t\nonumber\\
        &=\frac{1+4K^2/n}{eH_n}\int_0^\infty\sum_{I=1}^{K-1}\binom{K}{I} \frac{(e^t-1)^Ie^{-tK}}{t} \dd t\nonumber\\
        &=\frac{1+4K^2/n}{eH_n}\int_{0}^{\infty}\frac{1-e^{-tK}-\left(1-e^{-t}\right)^{K}}{t}dt\nonumber\\
        &\le \frac{1+4K^2/n}{eH_n}\left(\int_{0}^{1/K}Kdt+\int_{1/K}^{K}\frac{1}{t}dt+\int_{K}^{\infty}\frac{1-\left(1-e^{-t}\right)^{K}}{K}dt\right)\nonumber\\
        &\le \frac{1+4K^2/n}{eH_n}\left(1+2\log K+\int_{K}^{\infty}e^{-t}dt\right)\nonumber\\
        &\le \frac{1+4K^2/n}{eH_n}\left(2+2\log K\right).
    \end{align}
    Using the binomial theorem in the third step, using in the fourth step the inequalities
    $$
    \frac{1-e^{-tK}-\left(1-e^{-t}\right)^{K}}{t}\le K \text{ and }1-e^{-tK}-\left(1-e^{-t}\right)^{K}\le 1-\left(1-e^{-t}\right)^{K}\le 1,
    $$
    and using in the fifth step $1-(1-x)^K\le Kx$.
    Adding \eqref{eq:term-one}, \eqref{eq:term-two} and \eqref{eq:term-three} together gives the final bound on the objective.
    $$
    \left(\frac{K}{K+1}\right)^{K+1}+\frac{1}{H_n}\left(\frac{1}{K+1}+3+3H_K+\frac{1}{e}(1+4K^2/n)\left(2+2\log K\right)\right)
    $$
    The bound can be made cleaner with a bit of loss by invoking $n\ge 2K^2$ and $H_K\ge \log K$
    \begin{equation*}
        \left(\frac{K}{K+1}\right)^{K+1}+\frac{6+6H_K}{H_n}.\qedhere
    \end{equation*}
    \end{proof}
\newpage

\section{Finite-horizon algorithmic result}
\label{appendix:finite-horizon-prophet}
The following result was obtained after the original submission of this paper. It resolves the special case left open in Appendix \ref{appendix:finite-horizon-unit-random-walk}. 

The initial version of the argument was provided by an AI-system.
This argument, however, closely resembles \citet{samuel1984comparison} her argument for establishing a prophet inequality with the median as threshold. 
This special case was subsequently generalised to multiple samples and Wittmann's setting by the authors. The proof below is the authors’ formulation.
\begin{theorem}
    Let $(Y_i)_{1\le i \le n}$ be a sequence of non-negative random variables 
    and let $C \ge 0$ be such that, for every $1\le i\le n$,
    $$
        \E\left[\max_{i\le j\le n}(Y_j-Y_i)|\mathcal{F}_i\right]\le C\cdot\E\left[M\right],
    $$
    where $M=\max_{1\le j\le n}Y_j$ and $\mathcal{F}_i$ is the $\sigma$-algebra generated by $Y_1,\dots,Y_i$.
    Then, given access to $K$ independent samples from the distribution of $M$, there exists a stopping rule $\tau$ such that
    $$\E\left[Y_{\tau}\indicator{\tau\le n}\right]\ge
    \beta_C(K)\E[M]
    $$
    with $\beta_C(K)=\sup\{x-Cx^2:x \in A_K\}$ and $A_K=\{a/b:a,b\in\mathbb{N}\text{ and }0< a<b\le K+1\}$.
\end{theorem}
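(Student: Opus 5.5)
The plan is a Samuel-Cahn-type threshold argument in which the threshold is a random order statistic of the $K$ samples. Fix $x=a/b\in A_K$ with $0<a<b\le K+1$. Let $M_1,\dots,M_K$ be the samples and $M_{(1)}\ge\dots\ge M_{(K)}$ their order statistics. Using only the first $b-1\le K$ samples, define the threshold $T=M_{(a)}$, the $a$-th largest among $M_1,\dots,M_{b-1}$. The rule stops at the first $i$ with $Y_i\ge T$ (ties broken by independent randomisation if needed). The key exchangeability fact is that $M,M_1,\dots,M_{b-1}$ are i.i.d. Hence, with $x=a/b$,
\[
\Prob(\tau\le n)=\Prob(M\ge T)=x,
\]
because $M$ is among the top $a$ of $b$ i.i.d. values with probability $a/b$; for atoms, break ties uniformly at random. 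I then plan to show $\E[Y_\tau\indicator{\tau\le n}]\ge x\E[M]-Cx^2\E[M]$ and optimise over $x\in A_K$.

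Following Samuel-Cahn, decompose $Y_\tau\indicator{\tau\le n}=T\indicator{\tau\le n}+(Y_\tau-T)\indicator{\tau\le n}$. I would not bound the excess $(Y_j-T)^+$ pointwise. Instead I will use the hypothesis directly. On $\{\tau=i\}$ we have $Y_i\ge T$, and $M\le \max(\max_{j<i}Y_j,\, Y_i+\max_{j\ge i}(Y_j-Y_i))$, where $\max_{j<i}Y_j<T\le Y_i$. Therefore $M\indicator{\tau=i}\le (Y_i+\max_{i\le j\le n}(Y_j-Y_i))\indicator{\tau=i}$. Summing over $i$, conditioning on $\mathcal F_i$ together with the independent samples, and applying the hypothesis gives
\[
\E[M\indicator{\tau\le n}]\le \E[Y_\tau\indicator{\tau\le n}]+C\E[M]\,\Prob(\tau\le n).
\]
This uses that $\{\tau=i\}$ is measurable with respect to $\mathcal F_i$ and the samples, and that the samples are independent of $(Y_j)$, so the conditional bound still holds after conditioning on them as well. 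It follows that
\[
\E[Y_\tau\indicator{\tau\le n}]\ge \E[M\indicator{M\ge T}]-Cx\,\E[M].
\]

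It remains to show $\E[M\indicator{M\ge T}]\ge x\E[M]$. Here exchangeability is used again. Among the $b$ i.i.d. values $M,M_1,\dots,M_{b-1}$, the event $\{M\ge T\}$ is exactly the event that $M$ ranks in the top $a$. Then $\E[M\indicator{\text{top }a}]=\frac1b\E[\text{sum of top }a\text{ of }b]$, and the sum of the top $a$ is at least $\frac ab$ times the sum of all $b$ values. This gives $\E[M\indicator{M\ge T}]\ge \frac ab\E[M]=x\E[M]$, and hence the bound $(x-Cx)\E[M]$.

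This is weaker than $x-Cx^2$, and closing that gap is the main obstacle: the factor $C\Prob(\tau\le n)=Cx$ must improve to $Cx^2$. I would refine the second step by applying the hypothesis only on the part of $\{\tau=i\}$ where the prophet actually does better later, i.e.\ where $M>Y_i$. Alternatively, I would interpolate as $\E[Y_\tau\indicator{\tau\le n}]\ge \E[T\indicator{M\ge T}]+\E[\text{excess}]$, weighting the two bounds suitably. If neither works, I would reuse the exchangeability computation more sharply, since $\E[M\indicator{\text{top } a}]-x\E[M]$ has slack of order $x(1-x)$, which can absorb part of the $Cx$ term.

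Finally, taking the supremum over admissible $x=a/b$ gives $\beta_C(K)$.
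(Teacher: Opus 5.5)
Your first two ingredients are correct and coincide with the paper's. On $\{\tau\le n\}$ one has $M=Y_\tau+\max_{j\ge\tau}(Y_j-Y_\tau)$, which together with the hypothesis gives $\E[M\indicator{\tau\le n}]\le\E[Y_\tau\indicator{\tau\le n}]+Cx\,\E[M]$, and $\Prob(\tau\le n)=x$ by exchangeability. However, as you acknowledge, what you actually prove is only $(x-Cx)\E[M]$, so the statement is not established. The loss comes from replacing $\E[M\indicator{M\ge T}]$ by $x\E[M]$. That step never uses the fact that on the stopping event you collect at least $T$, and this fact is what controls the prophet's value on the event where you do not stop.

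The missing step is as follows. Pool $M,M_1,\dots,M_{b-1}$ and let $N_{(1)}\ge\dots\ge N_{(b)}$ be the pooled order statistics. Break ties by your randomisation, so that $\{\tau\le n\}$ is exactly the event that $M$ has pooled rank at most $a$. The pooled rank of $M$ is uniform on $\{1,\dots,b\}$ and independent of the $N_{(r)}$. If the rank is at most $a$, then $T=N_{(a+1)}$. Otherwise $M=N_{(r)}$ for some $r\ge a+1$, so $M\le N_{(a+1)}$. Hence
\[
\E[M\indicator{\tau>n}]=\frac1b\sum_{r=a+1}^{b}\E[N_{(r)}]\le\frac{b-a}{b}\E[N_{(a+1)}]=\frac{1-x}{x}\E[T\indicator{\tau\le n}]\le\frac{1-x}{x}\E[Y_\tau\indicator{\tau\le n}].
\]
Adding this to your inequality for $\E[M\indicator{\tau\le n}]$ gives $\E[M]\le\frac1x\E[Y_\tau\indicator{\tau\le n}]+Cx\E[M]$, which is exactly the claim $(x-Cx^2)\E[M]$. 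Equivalently, the result is the $(x,1-x)$-weighted average of your bound and the threshold bound $\E[Y_\tau\indicator{\tau\le n}]\ge\frac{x}{1-x}\E[M\indicator{\tau>n}]$. Your ``interpolate'' suggestion points this way, but the threshold bound is the nontrivial exchangeability step, and it is absent from your proposal. This is precisely the paper's argument.

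Your other two suggested fixes would not work. Restricting the hypothesis to $\{M>Y_i\}$ gains nothing, for two reasons. First, on $\{\tau=i,\,M=Y_i\}$ the excess $\max_{j\ge i}(Y_j-Y_i)$ is already zero. Second, $\{M>Y_i\}$ is not $\mathcal F_i$-measurable, so the conditional hypothesis cannot be applied on it. As for the slack $\E[M\indicator{M\ge T}]-x\E[M]$, it is zero when $M$ is almost surely constant, so it cannot absorb a term of size $Cx(1-x)\E[M]$ in general. In that example it is precisely the threshold bound $Y_\tau\ge T$ that delivers the reward.
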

\begin{proof}
    For clarity we assume that the distribution of $M$ is continuous. The below argument can easily be extended to the non-continuous case by adding a uniformly random tiebreaker. 
    
    Denote by $\tau(\alpha)=\inf\{1\le i\le n: Y_i > \alpha\}$ the stopping time associated with the threshold $\alpha$. On the event $\{\tau(\alpha)\le n\}$ we have that $Y_{\tau(\alpha)}>\alpha\ge Y_i$ for $i<\tau(\alpha)$, so on this event 
    $$
        M=\max_{\tau(\alpha)\le j\le n}Y_j=Y_{\tau(\alpha)}+\max_{\tau(\alpha)\le j\le n}(Y_j-Y_{\tau(\alpha)}).
    $$
    The above is a standard decomposition. Taking expectations yields
        \begin{align*}
        \E[M\indicator{\tau(\alpha)\le n}]&=\E[Y_{\tau(\alpha)}\indicator{\tau(\alpha)\le n}]+\sum_{i=1}^n\E\left[\indicator{\tau(\alpha) = i}\max_{i\le j\le n}(Y_j-Y_i)\right] \\
        &=\E[Y_{\tau(\alpha)}\indicator{\tau(\alpha)\le n}]+\sum_{i=1}^n\E\left[\indicator{\tau(\alpha) = i}\E\left[\max_{i\le j\le n}(Y_j-Y_i)|\mathcal{F}_i\right] \right]\\
        &\le\E[Y_{\tau(\alpha)}\indicator{\tau(\alpha)\le n}]+\sum_{i=1}^n\E\left[\indicator{\tau(\alpha) = i}C\E[M]\right]\\
        &=\E[Y_{\tau(\alpha)}\indicator{\tau(\alpha)\le n}]+C\E[M]\Prob(\tau(\alpha)\le n).
    \end{align*}
    Using in the second step that $\{\tau(\alpha)=i\}$ is $\mathcal{F}_i$-measurable. Denote by $M_{(i)}$ the $i$-th order statistic of the $K$ auxiliary samples. For fixed $1\le j\le K$ let $T=M_{(j)}$ be the threshold that is used. Let $N_i=M_i$, $N_{K+1}=M$ and let $N_{(i)}$ denote the joint $i$-th order statistic.
    By exchangeability (and no ties due to the continuity assumption) we find that
    $$
    \Prob(\tau(T)\le n)=\Prob(M\ge T)=\frac{K+1-j}{K+1}.
    $$
    The event $\{\tau(T)>n\}$ implies that $M$ is (uniformly) an order statistic $N_{(i)}$ with $1\le i\le j$, so
    $$
    \E[M\indicator{\tau > n}]=\sum_{i=1}^{j}\frac{1}{K+1}\E[N_{(i)}]\le\frac{j}{K+1}\E[N_{(j)}].
    $$
    On the event $\{\tau(T)\ge n\}$, $T=N_{(j)}$, so 
    $$
    \E[Y_{\tau(T)}\indicator{\tau(T)\le n}]\ge\E[T\indicator{\tau(T)\le n}] =\frac{K+1-j}{K+1}\E[N_{(j)}].
    $$
    All three taken together yield
    \begin{align*}
        \E[M]&=\E[M\indicator{\tau(T)\le n}]+\E[M\indicator{\tau(T)> n}]\\
        &\le \E[Y_{\tau(T)}\indicator{\tau(T)\le n}]+C\E[M]\frac{K+1-j}{K+1}+\frac{j}{K+1}\E[N_{(j)}]\\
        &\le \left(1+\frac{j}{K+1-j}\right)\E[Y_{\tau(T)}\indicator{\tau(T)\le n}]+C\E[M]\frac{K+1-j}{K+1}.
    \end{align*}
    Rearranging the inequality yields
    $$
    \E[Y_{\tau(T)}\indicator{\tau(T)\le n}]\ge \left(\left(\frac{K+1-j}{K+1}\right)-C\left(\frac{K+1-j}{K+1}\right)^{2}\right)\E[M].
    $$
    The optimal constant is equal to $\sup\{p-Cp^2:p\in\{1/(K+1),\dots,K/(K+1)\}\}$. Observe, however, that samples can also be discarded, so we recover that the constant is $\beta_C(K)$ (the supremum can be taken over previous sets).
\end{proof}
As noted by \citet{wittmann1995prophet}, the case $C=1$ corresponds to the random walk setting. For $C=1$ we find a prophet constant of $1/4$ for $K\ge 1$. For $C=2$ we find no prophet inequality for $K=1$, a $1/9$ for $K=2$ and a constant of $1/8$ for $K\ge 3$. 
More generally, 
$$
\lim_{K\to\infty}\beta_C(K)=\sup\{p-Cp^2:p\in[0,1]\}.
$$
When $C$ is rational this limit is attained for finite $K$ and otherwise it converges to it quickly. 
This limiting constant is also identified in \citet[Theorem 7]{wittmann1995prophet}.

To achieve more, like Wittmann does in his first theorem, the values of the samples have to be used in a more sophisticated manner. We leave improving the above theorem open, but remark that, as noted by Wittmann, that for $C=1$ the best possible prophet constant with full information is at most $1/2.881$ and that to achieve $1/e$ in the many-samples limit requires more exploitation of the random walk structure.

For posterity, the original partial result is preserved below.
\subsection{\texorpdfstring{$\pm1$}{+-1} finite-horizon algorithmic result}
\label{appendix:finite-horizon-unit-random-walk}
The existing literature on the full-information setting for the finite-horizon case does not readily extend to our sample-based information setting.
The threshold-based approach of \citet{wittmann1995prophet}, for instance, relies on precise distributional information, and replacing these thresholds by natural sample-based estimates leads to poor control of overshoots. 
Likewise, the framework of \citet{immorlica2023prophet} appears promising, but does not apply in our setting since it assumes non-negative increments. 
Moreover, as already remarked, the finite-horizon setting lacks the ladder height decomposition used by our infinite-horizon setting results.

By imposing an assumption, however, we do recover a ladder height like decomposition. Specifically, under the assumption that the increments are $\pm 1$,
the running maximum of the random walk can only increase with steps of $+1$, i.e., the ladder height increments $J_i$ are $+1$. This eliminates the dependence between the ladder height increments $J_i$ and intervals $I_i$ (because the former is constant), allowing for a more simplified analysis.

Using this simplification, we can establish the following (non-optimal) performance inequality.
\begin{theorem}
    \label{thm:partial-stopping-K-samples-special-case} 
    Let the distribution $F$ have support in $\{-1,+1\}$, let $(S_k)_{0\le k\le n}$ be a random walk with increments $X_i\iid F$ and let $(S'_k)_{0\le k\le n}$ be an independent copy. 
    Then there is a stopping rule $\stoppingRule$ that has access to auxiliary data $D=((S'_k)^+)_{0 \leq k \leq n}$, but no knowledge of $F$, such that
    $$
    \E[S_{\stoppingTime}^+]\ge0.1208\cdot \E\left[\max_{0\le k\le n}S_k\right].
    $$
\end{theorem}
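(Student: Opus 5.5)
The plan is to exploit the fact that with $\pm1$ increments the running maximum rises only in unit steps, so the problem reduces to a stopping problem for the integer-valued variable $M_n=\max_{0\le k\le n}S_k$. The one sample gives us an independent copy $M_n'=\max_{0\le k\le n}(S_k')^+$. The rule I would analyse is: compute a target level $Y=\lceil \theta M_n'\rceil$ for a parameter $\theta\in(0,1]$ chosen at the end, possibly randomised over $\theta$, and stop the first time $S_k=Y$. If $Y=0$ or the level is never reached, stop at $n$ or collect nothing. Since the walk hits every integer level up to $M_n$ exactly, the reward is $Y\indicator{M_n\ge Y}$. Writing $G(k)=\Prob(M_n\ge k)$, we get
$$
\E[S_{\stoppingTime}^+]\ \ge\ \sum_{m\ge 1}\Prob(M_n'=m)\,\lceil\theta m\rceil\, G(\lceil\theta m\rceil),\qquad \E[M_n]=\sum_{k\ge1}G(k).
$$

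The key structural step is that $G$ is submultiplicative: $G(a+b)\le G(a)G(b)$ for all $a,b\in\Nzero$. To prove it, let $T$ be the hitting time of level $a$. To reach $a+b$ within $n$ steps, the walk must hit $a$ at $T$ and then gain $b$ more in the remaining $n-T\le n$ steps. By the strong Markov property, the conditional probability of that gain is at most $\Prob(M_{n}\ge b)$, since the maximum over a shorter horizon is stochastically smaller. Consequently $G(jm)\le G(m)^j$, and more generally $G(k)\ge G(m)^{\lceil m/k\rceil}$-type lower bounds hold. These let us compare $G(\lceil\theta m\rceil)$ with the tail of $M_n'$ at $m$, playing the role that the geometric law of $G$ played in the proof of Theorem~\ref{thm:partial-stopping-K-samples-prophet}. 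This is exactly what excludes the bad instances where $M$ is $0$ with high probability and huge otherwise. For a general distribution those instances make sample-thresholding fail; for a submultiplicative (new-better-than-used) tail they are impossible.

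Next I would reduce the ratio to an extremal problem over submultiplicative tails $G$ on $\Nzero$ with $G(0)=1$. I would rewrite $\Prob(M_n'=m)=G(m)-G(m+1)$ and sum by parts, so that both numerator and denominator become linear in $G$. I would then bound the numerator from below using $G(\lceil\theta m\rceil)\ge G(m)^{\theta}$ up to rounding; this follows from submultiplicativity applied along $\lceil 1/\theta\rceil$ blocks. Finally I would substitute the continuous parametrisation $G(k)=e^{-h(k)}$ with $h$ subadditive.

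The main obstacle is identifying the extremal tail and handling the rounding in $\lceil\theta m\rceil$ uniformly over $n$. I expect the worst cases to be either geometric-like tails, for which the analysis mirrors the infinite-horizon computation, or tails with a small atom structure at $0$ and $1$. I would first try to show by a convexity and exchange argument that the ratio is minimised within a two-parameter family, for example a geometric tail with a point mass at the origin. I would then evaluate the resulting explicit function numerically, optimising over $\theta$ or over a simple randomisation of $\theta$. The target is a worst-case value of at least $0.1208$. If the rounding losses at small $m$ prove too costly, I would replace the ceiling by Poissonisation as in Definition~\ref{def:pois-s-rule} to smooth them out.
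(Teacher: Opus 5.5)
Your proposal has two genuine gaps: the rule as specified gets no positive constant, and even after repairing it, nothing in the plan produces the value $0.1208$.

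\textbf{The event $\{M_n'=0\}$.} As specified, your rule does not achieve any positive constant, and your displayed lower bound cannot certify one. On this event you either stop at $n$ or collect nothing, and the displayed bound simply drops its term. Let $q=G(1)=\Prob(M_n\ge1)$. Submultiplicativity gives $kG(k)\le kq^k\le q$ for $q\le 1/2$. So your sum over $m\ge1$ is at most $\Prob(M_n'\ge1)\,q=q^2$, while $\E[M_n]\ge q$, and the ratio is at most $q$. Concretely, take $n=2$ and $\Prob(X_i=+1)=p\to0$. Then $\E[M_2]=p+p^2$, whereas stopping at $n$ on $\{M_2'=0\}$ earns only $\E[S_2^+]=2p^2$, so the ratio is $O(p)$. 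The paper instead targets level $\max\{1,M_n'\}$. The resulting term $\Prob(M_n'=0)\Prob(M_n\ge1)$ is indispensable. With this fix and $\theta=1$, your rule coincides with the paper's.

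\textbf{The missing quantitative argument.} Even after the fix, nothing in your plan actually produces $0.1208$. Summation by parts does \emph{not} make the numerator linear in $G$. Since $M_n'$ has the same law as $M_n$, the numerator $\sum_m (G(m)-G(m+1))\lceil\theta m\rceil G(\lceil\theta m\rceil)$ is quadratic in $G$. Moreover, the set of submultiplicative tails is not convex. For $G_1(k)=r^k$ and $G_2(k)=s^k$, the average $G$ satisfies $G(2)-G(1)^2=(r-s)^2/4>0$. So your convexity-and-exchange reduction to a two-parameter family has no evident basis, and the final constant would rest on a conjectured extremal family plus numerics.

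There are also two slips. First, $h=-\log G$ is \emph{super}additive, not subadditive. Second, the block argument gives $G(k)\ge G(m)^{1/\lfloor m/k\rfloor}$, i.e.\ $\lfloor m/k\rfloor$ blocks, not $\lceil\cdot\rceil$. For $\theta>1/2$ this is just monotonicity, so $G(\lceil\theta m\rceil)\ge G(m)^{\theta}$ is not available.

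\textbf{The idea you need.} The paper uses a symmetrisation that exploits exactly this quadratic structure. By exchangeability, $\E[M_n'\indicator{M_n\ge M_n'}]=\tfrac12\E[\min\{M_n,M_n'\}]+\tfrac12\sum_{g}g\Prob(M_n=g)^2$. Submultiplicativity then gives $\E[\min\{M_n,M_n'\}]=\sum_{g\ge1}G(g)^2\ge\sum_{g\ge1}G(2g)\ge\tfrac12(\E[M_n]-G(1))$. After adding the $\{M_n'=0\}$ term, what remains is an elementary quadratic minimisation in $p_0=\Prob(M_n=0)$ and $p_1=\Prob(M_n=1)$, using $\E[M_n]\ge 2-2p_0-p_1$. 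This yields $\sqrt{14}/2-7/4>0.1208$.

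Your strong-Markov proof of submultiplicativity is correct and is a fine substitute for the paper's renewal argument.
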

To prove this result, first two structural lemmas.
\begin{lemma}
\label{lemma:NBU}
    Let $I_i$ be non-negative i.i.d. random variables and let $G_n=\max\{k:\sum_{i=1}^kI_i\le n\}$, then for non-negative integers $a$ and $b$ $$\mathbb{P}(G_n\ge a+b)\le\mathbb{P}(G_n\ge a)\mathbb{P}(G_n\ge b).$$
\end{lemma}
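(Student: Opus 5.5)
Throughout, the plan rests on the observation that $G_n$ is the counting process of a renewal sequence. Write $T_k=\sum_{i=1}^k I_i$ for $k\ge 0$, with $T_0=0$. Because the $I_i$ are non-negative, $k\mapsto T_k$ is non-decreasing. Hence $\{G_n\ge k\}=\{T_k\le n\}$ for every $k\in\Nzero$. Indeed, if $T_k\le n$ then $k$ belongs to the set whose maximum defines $G_n$. Conversely, if $G_n=m\ge k$ then $T_k\le T_m\le n$ by monotonicity. The claim therefore reduces to showing
$$
\Prob(T_{a+b}\le n)\le \Prob(T_a\le n)\,\Prob(T_b\le n).
$$

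Next I split $T_{a+b}=T_a+T'_b$ with $T'_b=\sum_{i=a+1}^{a+b}I_i$. Since the $I_i$ are i.i.d., $T'_b$ is independent of $T_a$ and has the same law as $T_b$. By non-negativity of each summand, the event $\{T_a+T'_b\le n\}$ is contained in $\{T_a\le n\}\cap\{T'_b\le n\}$. Taking probabilities and using independence gives
$$
\Prob(T_{a+b}\le n)\le \Prob(T_a\le n)\Prob(T'_b\le n)=\Prob(G_n\ge a)\Prob(G_n\ge b).
$$

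The degenerate cases $a=0$ or $b=0$ are trivial, since then one factor equals $\Prob(T_0\le n)=1$. The intended application has $I_i$ equal to ladder inter-arrival times, which may take the value $+\infty$. The argument is unchanged in that case, because it uses only non-negativity, independence and the monotonicity of $T_k$, all of which hold for $[0,\infty]$-valued variables.

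I expect no real obstacle here. The only point requiring care is the equivalence $\{G_n\ge k\}=\{T_k\le n\}$, which is where monotonicity, and hence non-negativity of the $I_i$, is used. This is the discrete ``new better than used'' property of the renewal counting process.
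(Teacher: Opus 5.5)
Your proposal is correct and follows essentially the same argument as the paper: split the increments into the first $a$ and the next $b$, use non-negativity to drop $T_a$ from the later partial sums, and factor via independence and identical distribution. The only difference is cosmetic. The paper works with $\tilde G_n=\min\{k:\sum_{i=1}^k I_i>n\}=G_n+1$ and keeps the whole chain of partial-sum constraints, whereas you use monotonicity to reduce directly to $\{G_n\ge k\}=\{T_k\le n\}$.
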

\begin{proof}
    This result is well-known in renewal theory, we give the proof here for completeness. Let $\tilde{G_n}_n=\min\{k:\sum_{i=1}^kI_i>n\}$, we have that 
    \begin{align*}
        \mathbb{P}(\tilde{G}_n>a+b)&=\mathbb{P}\left(\sum_{i=1}^1I_i\le n,\dots,\sum_{i=1}^{a+b}I_i\le n\right)\\
        &\le\mathbb{P}\left(\sum_{i=1}^1I_i\le n,\dots,\sum_{i=1}^{a}I_i\le n,\sum_{i=a+1}^{a+1}I_i\le n\dots,\sum_{i=a+1}^{a+b}I_i\le n\right)\\
        &=\mathbb{P}\left(\sum_{i=1}^1I_i\le n,\dots,\sum_{i=1}^{a}I_i\le n\right)\mathbb{P}\left(\sum_{i=a+1}^{a+1}I_i\le n\dots,\sum_{i=a+1}^{a+b}I_i\le n\right)\\
        &=\mathbb{P}(\tilde{G}_n>a)\mathbb{P}(\tilde{G}_n>b). 
    \end{align*}
    The penultimate equality follows from independence.
    Observe that $G_n+1=\tilde{G}_n$, therefore 
    $\Prob(\tilde{G}_n>a)=\Prob(G_n+1>a)\Prob(G_n\ge a)$, which gives the result. 
\end{proof}
We continue with the second lemma, building on the condition shown in the first lemma.
\begin{lemma}
    \label{lemma:bernoulli-rw-rewrite}
    Let $G_n$ and $G_n'$ be  i.i.d. non-negative integer-valued random variables that satisfy $\mathbb{P}(G_n\ge a+b)\le \mathbb{P}(G_n\ge a)\mathbb{P}(G_n\ge b)$ for all non-negative integers $a$ and $b$. Then
    \begin{align*}
    \E[\max\{G_n',1\}\indicator{G_n\ge \max\{G_n',1\}}]&\ge\frac{1}{4}\E[G_n]+\left(\frac{3}{4}-\Prob(G_n\ge 1)\right)\Prob(G_n\ge 1)\\&+\,\frac{1}{2}\sum_{g=1}^\infty g\Prob(G_n=g)^2.
    \end{align*}
\end{lemma}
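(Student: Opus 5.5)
The plan is to compute the left-hand side exactly in terms of the tail probabilities $P_k:=\Prob(G_n\ge k)$. I would then symmetrise in $G_n,G_n'$, so that $\E[\min\{G_n,G_n'\}]$ and the diagonal term $\sum_g g\Prob(G_n=g)^2$ appear, and finally lower-bound $\E[\min\{G_n,G_n'\}]=\sum_k P_k^2$ using the submultiplicativity hypothesis. The constants $1/4$, $3/4$ and $1/2$ suggest this is exactly the intended route.

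\emph{Step 1 (splitting off $G_n'=0$).} Conditioning on $G_n'$ and using independence,
\begin{align*}
\E[\max\{G_n',1\}\indicator{G_n\ge \max\{G_n',1\}}]
&=\Prob(G_n'=0)P_1+\sum_{g\ge1}g\Prob(G_n'=g)P_g\\
&=(1-P_1)P_1+\E[G_n'\indicator{G_n\ge G_n'}].
\end{align*}
Here the terms with $G_n'=0$ contribute nothing to the second expectation, so it agrees with the sum over $g\ge1$.

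\emph{Step 2 (symmetrisation).} Pointwise,
$$
G_n'\indicator{G_n\ge G_n'}+G_n\indicator{G_n'\ge G_n}=\min\{G_n,G_n'\}+G_n\indicator{G_n=G_n'}.
$$
The two terms on the left have equal expectation because $G_n$ and $G_n'$ are exchangeable. Taking expectations therefore gives
$$
\E[G_n'\indicator{G_n\ge G_n'}]=\tfrac12\sum_{k\ge1}P_k^2+\tfrac12\sum_{g\ge1}g\Prob(G_n=g)^2,
$$
where I used $\E[\min\{G_n,G_n'\}]=\sum_{k\ge1}\Prob(\min\ge k)=\sum_{k\ge1}P_k^2$. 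This already produces the last term of the claimed bound.

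\emph{Step 3 (using submultiplicativity).} Applying the hypothesis with $a=b=k$ gives $P_{2k}\le P_k^2$, so $\sum_{k\ge1}P_k^2\ge\sum_{k\ge1}P_{2k}$. Monotonicity of the tails gives $P_{2k+1}\le P_{2k}$, and hence
$$
\E[G_n]=\sum_{k\ge1}P_k\le P_1+2\sum_{k\ge1}P_{2k}.
$$
Combining the two, $\sum_k P_k^2\ge(\E[G_n]-P_1)/2$.

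\emph{Step 4 (assembly).} Substituting Step 3 into Step 2 and then into Step 1, the left-hand side is at least
$$
(1-P_1)P_1+\tfrac14\E[G_n]-\tfrac14P_1+\tfrac12\sum_{g\ge1}g\Prob(G_n=g)^2.
$$
Since $(1-P_1)P_1-\tfrac14P_1=(\tfrac34-P_1)P_1$, this is exactly the stated bound.

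The only genuinely non-routine step is Step 3: deciding how to exploit submultiplicativity with a loss of only the single $P_1$ term. Pairing $P_{2k}$ with $P_{2k+1}$ is what I would try first, and it appears to give exactly the stated constants. If $\E[G_n]=\infty$ the inequality is trivial, since the left-hand side dominates $\tfrac12\sum_kP_k^2$ and the argument of Step 3 forces this to be infinite as well. I would therefore assume finiteness throughout, so that the manipulations of the series are justified.
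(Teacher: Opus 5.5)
Your proof is correct and follows the paper's argument step for step: split off the event $\{G_n'=0\}$, symmetrise to obtain $\frac{1}{2}\E[\min\{G_n,G_n'\}]+\frac{1}{2}\sum_{g\ge1}g\Prob(G_n=g)^2$, bound $\Prob(G_n\ge k)^2\ge\Prob(G_n\ge 2k)$ via submultiplicativity, and pair $\Prob(G_n\ge 2k)$ with $\Prob(G_n\ge 2k+1)$ so that only $\frac{1}{4}\Prob(G_n\ge 1)$ is lost. Your write-up is in fact slightly cleaner than the paper's: its displayed chain has the indicator in the symmetrisation step reversed, and writes ``$=$'' in the pairing step where ``$\ge$'' is needed; your version has both right.
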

\begin{proof}
    First of all, by resolving the case $G_n'=0$ we find that
    $$
    \E[\max\{G_n',1\}\indicator{G_n\ge \max\{G_n',1\}}]=\mathbb{P}(G_n'=0)\mathbb{P}(G_n\ge 1)+\E[G_n'\indicator{G_n\ge G_n'}].
    $$
    The result now follows from rewriting
    \begin{align*}
        \E[G_n'\indicator{G_n\ge G_n'}]&=\frac{1}{2}\E[G_n'\indicator{G_n\ge G_n'}+G_n\indicator{G_n'< G_n}+G\indicator{G_n'= G_n}]\\
            &=\frac{1}{2}\E[\min\{G_n,G_n'\}]+\frac{1}{2}\sum_{g=1}^\infty g\mathbb{P}(G_n=g)^2\\
            &=\frac{1}{2}\sum_{g=1}^\infty\mathbb{P}(G_n\ge g)^2+\frac{1}{2}\sum_{g=1}^\infty g\mathbb{P}(G_n=g)^2\\
            &\ge\frac{1}{2}\sum_{g=1}^\infty\mathbb{P}(G_n\ge 2g)+\frac{1}{2}\sum_{g=1}^\infty g\mathbb{P}(G_n=g)^2\\
            &=\frac{1}{4}\sum_{g=1}^\infty(\mathbb{P}(G_n\ge 2g)+\mathbb{P}(G_n\ge 2g+1))+\frac{1}{2}\sum_{g=1}^\infty g\mathbb{P}(G_n=g)^2\\
            &\ge\frac{1}{4}\E[G_n]-\frac{1}{4}\mathbb{P}(G_n\ge 1)+\frac{1}{2}\sum_{g=1}^\infty g\mathbb{P}(G_n=g)^2.
    \end{align*}
    Adding the expressions together gives the result.
\end{proof}

With the above lemma the following inequality can be established.

\begin{lemma}
\label{lemma:submult-expectation}
    Let $G_n$ and $G_n'$ be  i.i.d. non-negative integer-valued random variables that satisfy $\mathbb{P}(G_n\ge a+b)\le \mathbb{P}(G_n\ge a)\mathbb{P}(G_n\ge b)$ for all non-negative integers $a$ and $b$, then for $\alpha=2-\frac{1}{2}\sqrt{14}$
    \begin{align*}
    \E[\max\{G_n',1\}\indicator{G_n\ge \max\{G_n',1\}}]&\ge\left(\frac{1}{4}-\alpha\right)\E[G_n].
    \end{align*}
\end{lemma}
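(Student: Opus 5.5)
The plan is to start from Lemma \ref{lemma:bernoulli-rw-rewrite} and show that its two correction terms can lose at most $\alpha\E[G_n]$. Write $q=\Prob(G_n\ge 1)$, $m=\E[G_n]$ and $P_1=\Prob(G_n=1)$. Keeping only the $g=1$ term of the last sum, Lemma \ref{lemma:bernoulli-rw-rewrite} gives
$$
\E[\max\{G_n',1\}\indicator{G_n\ge \max\{G_n',1\}}]\ge \frac{m}{4}+\Big(\frac34-q\Big)q+\frac12 P_1^2 .
$$
It therefore suffices to prove the scalar inequality
$$
\Big(q-\frac34\Big)q\le \alpha m+\frac12 P_1^2 .
$$
If $q\le 3/4$ the left-hand side is non-positive, so there is nothing to prove. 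All the work is in the regime $q\in(3/4,1]$.

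For that regime I would derive two lower bounds on $P_1$ in terms of $q$ and $m$.
\begin{itemize}
\item Submultiplicativity with $a=b=1$ gives $\Prob(G_n\ge 2)\le q^2$, so $P_1\ge q-q^2$.
\item From $m=\sum_{g\ge1}\Prob(G_n\ge g)\ge q+\Prob(G_n\ge2)$ we get $\Prob(G_n\ge 2)\le m-q$, so $P_1\ge 2q-m$.
\end{itemize}
We also always have $m\ge q$. The inequality then reduces to a two-variable statement: for all $q\in(3/4,1]$ and $m\ge q$,
$$
q^2-\tfrac34 q\le \alpha m+\tfrac12\max\{2q-m,\;q-q^2,\;0\}^2 .
$$
The trade-off this captures is that a small $m$ forces mass at $1$, which makes the $P_1^2$ term large, while a large $m$ is paid for directly by $\alpha m$.

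The main step is to verify this elementary inequality and to see that $\alpha=2-\tfrac12\sqrt{14}$ is exactly the constant that makes it hold. For fixed $q$ I would minimize the right-hand side over $m\ge q$, using the bound $2q-m$ (the $q-q^2$ bound is small near $q=1$). On the branch $m\le 2q$ the right-hand side $\alpha m+\tfrac12(2q-m)^2$ is convex in $m$, with stationary point $m=2q-\alpha$. This point is feasible, since $2q-\alpha\ge q$ when $q\ge\alpha$. The minimum value is $2\alpha q-\alpha^2/2$. On the branch $m\ge 2q$ the right-hand side is $\alpha m\ge 2\alpha q$, which is larger.

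It then remains to check that $q^2-\tfrac34 q\le 2\alpha q-\tfrac12\alpha^2$ on $[3/4,1]$. The difference is a convex quadratic in $q$, so it suffices to check the endpoints.
\begin{itemize}
\item At $q=3/4$ the left side is $0$ and the right side is $\tfrac32\alpha-\tfrac12\alpha^2>0$.
\item At $q=1$ the condition becomes $\tfrac14\le 2\alpha-\tfrac12\alpha^2$, i.e.\ $\alpha^2-4\alpha+\tfrac12\le0$. Its smaller root is $\alpha=2-\sqrt{7/2}=2-\tfrac12\sqrt{14}$, so this holds with equality.
\end{itemize}
This confirms both the argument and the choice of $\alpha$.

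The delicate points are bookkeeping rather than ideas. First, the bound $P_1\ge 2q-m$ needs $2q-m\ge0$ before it can be squared; when $2q-m<0$ we simply use $P_1\ge0$, which is covered by the branch $m\ge 2q$. Second, the $g\ge2$ terms of $\sum_g g\Prob(G_n=g)^2$ are discarded, and the $q-q^2$ bound turns out not to be needed. Finally, rearranging gives $\E[\cdots]\ge \tfrac m4-\alpha m=(\tfrac14-\alpha)\E[G_n]$.
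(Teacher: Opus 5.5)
Your proof is correct and is essentially the paper's argument: both start from Lemma~\ref{lemma:bernoulli-rw-rewrite}, keep only the $g=1$ term, and use the single bound $\E[G_n]\ge 2-2p_0-p_1$ (your $P_1\ge 2q-m$), after which only an elementary quadratic minimisation remains. The paper substitutes this bound for $\E[G_n]$ and minimises over $(p_0,p_1)$ using concavity in $p_0$, whereas you eliminate $P_1$ and minimise over $m$ for fixed $q$. Both routes reach the same tight point $q=1$, $P_1=\alpha$, $\E[G_n]=2-\alpha$.
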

\begin{proof}
    We continue from Lemma \ref{lemma:bernoulli-rw-rewrite}. Let $Q=\E[\max\{G_n',1\}\indicator{G_n\ge \max\{G_n',1\}}]$ and let $p_i=\mathbb{P}(G_n=0)$. Observe that $Q\ge (1/4-\alpha)E[G_n]$ if 
    $$
    \alpha\E[G_n]+\left(p_0-\frac{1}{4}\right)(1-p_0)+\frac{1}{2}\sum_{g=1}^\infty gp_g^2\ge 0.
    $$
    Using $\E[G_n]\ge 2 - 2p_0-p_1$ we obtain the sufficient condition
    $$
    \alpha\E[G_n]+\left(p_0-\frac{1}{4}\right)(1-p_0)+\frac{1}{2}\sum_{g=1}^\infty gp_g^2\ge\alpha(2-2p_0-p_1)+\left(p_0-\frac{1}{4}\right)(1-p_0)+\frac{1}{2}p_1^2\ge 0.
    $$
    Write 
    \begin{align*}
        f(\alpha,p_0,p_1)&=\alpha(2-2p_0-p_1)+\left(p_0-\frac{1}{4}\right)(1-p_0)+\frac{1}{2}p_1^2\\
        &=\frac{1}{2}p_1^2-p_0^2+\frac{5}{4}p_0+\alpha(2-2p_0-p_1)-\frac{1}{4}.
    \end{align*}
    The map $p_0\mapsto f(\alpha, p_0,p_1)$ is concave and attains a minimum at its endpoints, so
    \begin{align*}
        &\equalSpace\min\{f(\alpha,p_0,p_1):p_0\ge 0, p_1\ge 0, p_0+p_1\le 1\}\\
        &=\min\{\min_{0\le p_1\le 1}\{f(\alpha, 0, p_1)\}, \min_{0\le p_1\le 1}\{f(\alpha, 1-p_1, p_1)\}\}
    \end{align*}
    For $\alpha=2-\frac{1}{2}\sqrt{14}$, the map $p_1\mapsto f(\alpha, 0, p_1)$ is non-negative, and for the same $\alpha$ the map $p_1\mapsto f(\alpha, 1-p_1, p_1)$ is a concave, and attains its minimum at its endpoints, these endpoints evaluate to $f(\alpha,1, 0)=0$ and $f(\alpha,0,1)>0$. We therefore have that the above minimisation problem is non-negative for $\alpha=2-\frac{1}{2}\sqrt{14}$, and consequently that the desired inequality holds for $\alpha=2-\frac{1}{2}\sqrt{14}$.
\end{proof}

The proof is now straightforward. 
\begin{cleverproof}{thm:partial-stopping-K-samples-special-case} 
    For a random walk with $\pm 1$ increments the finite-horizon maximum is the amount of ladder heights, i.e., $\max_{0\le k\le n}S_k=G_n$.
    
    From the auxiliary data an i.i.d. copy $G_n'$ of $G_n$ (the amount of ladder heights in the process) can be constructed by counting. A stopping rule that stops at the $\max\{1,G_n'\}$-th ladder height obtains a payoff of at least 
    $$
    \left(\frac{1}{2}\sqrt{14}-\frac{7}{4}\right)\E[G_n]
    $$
    by the above lemma. This therefore results in a prophet constant of at least $\frac{1}{2}\sqrt{14}-\frac{7}{4}>0.1208$. 
\end{cleverproof}
This prophet constant can be improved with more refined arguments, but we leave this as an open problem. Moreover, note that this result holds when $I_i$ and $J_i$ are independent. This also occurs when the support is a subset of $\{z\in\mathbb{Z}:z\le 1\}$, as then the ladder heights increments can also only be $+1$.

\end{document}